\documentclass[a4paper,11pt]{amsart}

\usepackage{amsmath, amsthm, amsfonts, amssymb}
\usepackage[colorlinks=true, linkcolor=blue, citecolor=red, menucolor=black]{hyperref}
\usepackage{enumerate}

\usepackage{color}

\numberwithin{equation}{section}

\setcounter{tocdepth}{1}

\setlength{\evensidemargin}{0pt}
\setlength{\oddsidemargin}{0pt}
\setlength{\topmargin}{-10pt}
\setlength{\textheight}{670pt}
\setlength{\textwidth}{450pt}
\setlength{\headsep}{20pt}
\setlength{\footskip}{30pt}
\setlength{\parindent}{0pt}
\setlength{\parskip}{1ex plus 0.5ex minus 0.2ex}

\newtheorem{letterthm}{Theorem}

\newtheorem{letterapp}{Application}

\newtheorem*{corollary}{Corollary}

\newtheorem{thm}{Theorem}[section]
\newtheorem{lem}[thm]{Lemma}

\newtheorem{prop}[thm]{Proposition}

\theoremstyle{definition}

\newtheorem{df}[thm]{Definition}

\newtheorem{claim}[thm]{Claim}

\newcommand{\R}{\mathbf{R}}
\newcommand{\C}{\mathbf{C}}
\newcommand{\Z}{\mathbf{Z}}
\newcommand{\F}{\mathbf{F}}

\newcommand{\N}{\mathbf{N}}

\newcommand{\cX}{\mathcal{X}}

\newcommand{\cU}{\mathcal{U}}

\newcommand{\Ad}{\operatorname{Ad}}
\newcommand{\id}{\text{\rm id}}

\newcommand{\Aut}{\mathord{\text{\rm Aut}}}
\newcommand{\rL}{\mathord{\text{\rm L}}}
\newcommand{\rB}{\mathord{\text{\rm B}}}
\newcommand{\rC}{\mathord{\text{\rm C}}}
\newcommand{\dom}{\mathord{\text{\rm dom}}}
\newcommand{\wm}{\mathord{\text{\rm wm}}}
\newcommand{\an}{\mathord{\text{\rm an}}}
\newcommand{\ap}{\mathord{\text{\rm ap}}}

\newcommand{\rT}{\mathord{\text{\rm T}}}

\newcommand{\rE}{\mathord{\text{\rm E}}}
\newcommand{\rF}{\mathord{\text{\rm F}}}

\newcommand{\Out}{\mathord{\text{\rm Out}}}
\newcommand{\Inn}{\mathord{\text{\rm Inn}}}

\newcommand{\tr}{\mathord{\text{\rm tr}}}
\newcommand{\core}{\mathord{\text{\rm c}}}

\newcommand{\Tr}{\mathord{\text{\rm Tr}}}
\newcommand{\Ball}{\mathord{\text{\rm Ball}}}
\newcommand{\spn}{\mathord{\text{\rm span}}}
\newcommand{\supp}{\mathord{\text{\rm supp}}}

\newcommand{\ovt}{\mathbin{\overline{\otimes}}}

\newcommand{\FC}{\mathord{\text{\rm FC}}}

\newcommand{\dpr}{^{\prime\prime}}

\begin{document}

\title[Structure of extensions of free Araki-Woods factors]{Structure of extensions of free Araki-Woods factors}

\begin{abstract}
We investigate the structure of crossed product von Neumann algebras arising from Bogoljubov actions of countable groups on Shlyakhtenko's free Araki--Woods factors. Among other results, we settle the questions of factoriality and Connes' type classification. We moreover provide general criteria regarding fullness and strong solidity. As an application of our main results, we obtain examples of type ${\rm III_0}$ factors that are prime, have no Cartan subalgebra and possess a maximal amenable abelian subalgebra. We also obtain a new class of strongly solid type ${\rm III}$ factors with prescribed Connes' invariants that are not isomorphic to any free Araki--Woods factors.
\end{abstract}

\author{Cyril Houdayer}

\email{cyril.houdayer@math.u-psud.fr}

\author{Benjamin Trom}

\email{benjamin.trom@math.u-psud.fr}

\address{Laboratoire de Math\'ematiques d'Orsay\\ Universit\'e Paris-Sud\\ CNRS\\ Universit\'e Paris-Saclay\\ 91405 Orsay\\ FRANCE}

\thanks{Research supported by ERC Starting Grant GAN 637601}

\subjclass[2010]{46L10, 46L40, 46L54, 46L55}
\keywords{Free Araki--Woods factors; Fullness; Maximal amenable subalgebras; Popa's deformation/rigidity theory; Type ${\rm III}$ factors; Ultraproduct von Neumann algebras}

\maketitle

\section{Introduction and statement of the main results}

Free Araki--Woods factors were introduced by Shlyakhtenko in \cite{Sh96} using Voiculescu's free Gaussian functor \cite{Vo85, VDN92}. To any strongly continuous orthogonal representation $U : \R \curvearrowright H_\R$, one can associate a von Neumann algebra $\Gamma(H_\R, U)\dpr$, called the {\em free Araki-Woods} von Neumann algebra, that is endowed with a canonical faithful normal state $\varphi_U$, called the {\em free quasi-free} state. We refer the reader to Section \ref{section-preliminaries} for a detailed construction. Using Voiculescu's free probability theory, Shlyakhtenko settled the questions of factoriality, type classification, fullness and Connes' type ${\rm III}$ invariants for free Araki--Woods von Neumann algebras \cite{Sh96, Sh97a, Sh97b, Sh02} (see also \cite{Va04}). When $U = 1_{H_\R}$, we have $\Gamma(H_\R, 1_{H_\R})\dpr \cong \rL(\F_{\dim(H_\R)})$ and so $M = \Gamma(H_\R, 1_{H_\R})\dpr$ is a free group factor. When $U \neq 1_{H_\R}$, $\Gamma(H_\R, U)\dpr$ is a full factor of type ${\rm III}$. For that reason, free Araki--Woods factors are often regarded as type ${\rm III}$ analogues of free group factors.

To any countable group $G$ and any orthogonal representation $\pi : G \curvearrowright H_\R$ such that $U$ and $\pi$ commute (abbreviated $[U, \pi] = 0$ hereafter), one can associate the corresponding {\em free Bogoljubov} action $\sigma^\pi : G \curvearrowright \Gamma(H_\R, U)\dpr$ that preserves the free quasi-free state $\varphi_U$. We simply denote the crossed product von Neumann algebra $\Gamma(H_\R, U)\dpr \rtimes G$ by $\Gamma(U, \pi)\dpr$. We refer to the von Neumann algebra $\Gamma(U, \pi)\dpr$ as the {\em extension} of the free Araki--Woods von Neumann algebra $\Gamma(H_\R, U)\dpr$ by the countable group $G$ via the free Bogoljubov action $\sigma^\pi$.

In this paper, we investigate the structure of entensions of free Araki--Woods factors $\Gamma(U, \pi)\dpr$. Among other results, we settle the questions of factoriality and Connes' type classification. We moreover provide general criteria regarding fullness and strong solidity.  Our results generalize and strengthen some of the results obtained by the first named author \cite{Ho12b} regarding the structure of crossed product type ${\rm II_1}$ factors arising from free Bogoljubov actions of countable groups on free group factors. Moreover, we apply our results to obtain new classes of type ${\rm III}$ factors with various structural properties such as the existence of maximal amenable abelian subalgebras or the property of strong solidity, to name a few. All locally compact groups are assumed to be second countable and all (real) Hilbert spaces are assumed to be separable, unless stated otherwise.

\subsection*{Factoriality and Connes' type classification}

Our first result settles the questions of factoriality and Connes' type classification of extensions of free Araki--Woods factors $\Gamma(U, \pi)\dpr$. Let $G$ be any countable group. For every $g \in G$, we denote by $C(g) = \{hgh^{-1} \mid h \in G\}$ the conjugacy class of $g \in G$. Recall that the FC-{\em radical} of $G$ is defined by $\FC(G) = \{g \in G \mid |C(g)| < +\infty\}$. Observe that $\mathcal Z(G) < \FC(G) < G$, where $\mathcal Z(G)$ denotes the center of $G$.

\begin{letterthm}\label{letterthm-type}
Let $U : \R \curvearrowright H_\R$ be any strongly continuous orthogonal representation with $\dim H_\R \geq 2$. Let $G$ be any countable group and $\pi : G \curvearrowright H_\R$ any orthogonal representation such that $[U, \pi] = 0$. Put $M = \Gamma(U, \pi)\dpr$. The following assertions hold.
\begin{itemize}
\item [$(\rm i)$] $M$ is a factor if and only if $\pi_g \neq 1$ for every $g \in \FC(G) \setminus \{e\}$.

\item [$(\rm ii)$] Assume that $M$ is a factor. Then
$$\rT(M) = \left\{ t \in \R \mid \exists g \in \mathcal Z(G) \; \text{ such that } \; U_t = \pi_g\right \}.$$

\item [$(\rm iii)$] When $M$ is a factor, Connes' invariant $\rT(M)$ completely determines the type of $M$.
\begin{align*}
M \text{ is of type } {\rm III_1} & \quad  \Leftrightarrow \quad \rT(M) = \{0\} \\
M \text{ is of type } {\rm III_\lambda} & \quad  \Leftrightarrow \quad \rT(M) = \frac{2 \pi}{\log \lambda} \Z \quad \text{with} \quad  0 < \lambda < 1 \\
M \text{ is of type } {\rm III_0} & \quad  \Leftrightarrow \quad \rT(M) \text{ is dense in } \R \text{ and } \rT(M) \neq \R \\
M \text{ is of type } {\rm II_1} & \quad  \Leftrightarrow \quad \rT(M) = \R
\end{align*}
\item [$(\rm iv)$] When $M$ is a type ${\rm III_1}$ factor, $M$ has trivial bicentralizer.
\end{itemize}
\end{letterthm}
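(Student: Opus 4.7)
The plan is to treat the four parts in sequence, in each case reducing to a Fourier analysis of elements $x = \sum_{g \in G} x_g u_g$ of the crossed product $M = N \rtimes G$, where $N = \Gamma(H_\R, U)\dpr$ is Shlyakhtenko's free Araki--Woods factor (a factor since $\dim H_\R \geq 2$). The key structural input used throughout parts (i)--(iii) is that whenever $V \in \mathcal O(H_\R)$ commutes with $U$, the Bogoljubov automorphism $\sigma^V$ on $N$ is outer unless $V = 1_{H_\R}$, together with the fact that the dual state $\widetilde \varphi_U$ on $M$ has modular flow acting as $\sigma^{U_t}$ on $N$ and fixing each $u_g$.

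For (i), if $z = \sum_g z_g u_g$ lies in $\mathcal Z(M)$, then commutation with $N$ yields $z_g \sigma^{\pi_g}(x) = x z_g$ for all $x \in N$, so $z_g \neq 0$ forces $\sigma^{\pi_g}$ to be inner, hence $\pi_g = 1_{H_\R}$, and then $z_g \in \mathcal Z(N) = \C$. Commutation with every $u_h$ gives $z_{hgh^{-1}} = z_g$, so the support of $z$ is a conjugation-invariant subset of $\ker(\pi)$; $L^2$-summability confines it to $\FC(G) \cap \ker(\pi)$, and $\mathcal Z(M) = \C$ iff this intersection is trivial, which is the stated criterion. For (ii), the inclusion $\supseteq$ is immediate: if $g \in \mathcal Z(G)$ satisfies $\pi_g = U_t$, then $u_g$ commutes with every $u_h$ and implements $\sigma^{U_t}$ on $N$, whence $\Ad(u_g) = \sigma_t^{\widetilde \varphi_U}$. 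For $\subseteq$, write $\sigma_t^{\widetilde \varphi_U} = \Ad(w)$ with $w = \sum_g w_g u_g$ unitary; the identity $wx = \sigma^{U_t}(x)w$ on $N$ forces $\sigma^{U_t \pi_g^{-1}} = \Ad(w_g)$, so $w_g = 0$ unless $\pi_g = U_t$, in which case $w_g \in \C$. Commutation of $w$ with the $u_h$ makes the support of $w$ a non-empty conjugation-invariant subset of $\FC(G) \cap \pi^{-1}(U_t)$, and factoriality (part (i)) makes $\pi$ injective on $\FC(G)$; so this support is a singleton, which is therefore central in $G$.

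For (iii), part (ii) already realises $\rT(M)$ as a subgroup of $\R$. If $U = 1_{H_\R}$, then $N = \rL(\F_{\dim H_\R})$ is tracial, $\widetilde \varphi_U$ is a trace, and $M$ is of type $\rm II_1$, matching $\rT(M) = \R$. If $U \neq 1_{H_\R}$, then strong continuity of $U$ and countability of $\pi(\mathcal Z(G))$ force the connected image $\{U_t : t \in \rT(M)\}$ to reduce to $\{1\}$, so $\rT(M) \subsetneq \R$ and $M$ is of type $\rm III$; Connes' trichotomy for closed, discrete and dense subgroups of $\R$ then yields the three type $\rm III$ subcases.

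Part (iv) is the step I expect to be the main obstacle. For $M$ to be type $\rm III_1$ one must have $U \neq 1_{H_\R}$, so that $N$ is a genuine type $\rm III$ free Araki--Woods factor. The plan is to reduce triviality of the bicentralizer of $M$ to an existing bicentralizer result --- for free Araki--Woods factors \`a la Houdayer--Raum, or for (amalgamated) free products \`a la Houdayer--Ueda and Marrakchi --- either by realising $M$, or a well-chosen corner of $M$, as an amalgamated free product over an amenable subalgebra, or by verifying Marrakchi's criterion that a full type $\rm III_1$ factor containing a large subfactor with trivial bicentralizer and a suitable relative spectral gap must itself have trivial bicentralizer. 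The difficulty will be carrying out this reduction uniformly, without any restriction on $\pi$ beyond factoriality, which likely forces one to couple it with the fullness analysis of $M$ developed elsewhere in the paper.
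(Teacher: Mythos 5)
Parts (i) and (ii) of your proposal are essentially the paper's own argument: Fourier decomposition of elements commuting with $N$ (resp.\ implementing $\sigma^\psi_t$), conjugation-invariance and square-summability of the coefficients confining the support to $\FC(G)$, and outerness of nontrivial Bogoljubov automorphisms commuting with $U$. Two caveats, one minor and one serious. The minor one: you take the outerness statement as a black box, whereas in the type ${\rm III}$ setting it is not in the literature and the paper must prove it (Lemma \ref{lem-outer}), via a reducible/irreducible dichotomy, Ueda's result on free products, Proposition \ref{prop-irreducible}, and the type ${\rm III_\lambda}$ case of \cite{Sh96}; the known result \cite{HS09} only covers the tracial case. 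The serious gap is in (iii). Connes' results give only the forward implications (the type determines $\rT(M)$); the converse fails for general type ${\rm III}$ factors. A type ${\rm III_0}$ factor can have $\rT(M)=\{0\}$ or $\rT(M)=\frac{2\pi}{\log\lambda}\Z$: its $\rT$ invariant is the point spectrum of its flow of weights, which can be any countable subgroup of $\R$ (trivial, cyclic, or dense). So there is no ``Connes' trichotomy for subgroups of $\R$'' that settles the three ${\rm III}$ subcases; the whole content of (iii) is to \emph{exclude} type ${\rm III_0}$ when $\rT(M)$ is trivial or discrete, and your sketch contains no mechanism for this. The paper does it by hand: when $\rT(M)=\{0\}$ it proves that the continuous core $\core(M)=\core(N)\rtimes G$ is a factor (a core version of your Fourier-support claim, Claim \ref{claim-2}, combined with outerness of the core action via \cite{HS88} and Lemma \ref{lem-outer}); when $\rT(M)=T\Z$ it computes either $\mathcal Z(\core(M))\cong\rL^\infty(\R/T\Z)$ with essentially transitive dual flow (case $\rT(N)=\{0\}$), or $\mathcal Z(M_\psi)=\rL(g^\Z)$ discrete via Claim \ref{claim-3} and the discrete decomposition, which rules out ${\rm III_0}$ by \cite{Co72}. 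Your argument that $\rT(M)=\R$ forces $U=1$ (connected image inside a countable set) is fine and replaces the paper's appeal to semifiniteness, but it does not touch the hard cases.

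For (iv) you offer only a plan, along a route different from the paper's (realizing $M$ as an amalgamated free product over an amenable algebra, or invoking a Marrakchi-type criterion requiring fullness), and you rightly flag that you cannot carry it out uniformly in $\pi$; as written it is not a proof. The paper's route is: reduce $\rB(M,\psi)$ to $\rB(N\rtimes\FC(G),\psi)$ using Claim \ref{claim-1} and the ultraproduct description of the bicentralizer from \cite{HI15}; write $\FC(G)$ as an increasing union of finitely generated, hence virtually abelian, subgroups $H_n$; prove that each $N\rtimes H_n$ is semisolid (Theorem \ref{thm-prime}, which rests on the deformation/rigidity material of Appendix \ref{appendix-deformation}) so that it has trivial bicentralizer by \cite[Theorem 3.7]{HI15}; and conclude by a limit argument over the $\psi$-preserving expectations onto $N\rtimes H_n$. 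So, in sum: (i) and (ii) are correct and close to the paper, but (iii) rests on a false principle and (iv) is unproved.
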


One of the key elements of the proof of Theorem \ref{letterthm-type} is the fact that whenever $\pi \in \mathcal O(H_\R)$ is a nontrivial orthogonal transformation that commutes with $U$, the corresponding Bogoljubov automorphism $\sigma^\pi \in \Aut( \Gamma(H_\R, U)\dpr)$ is not inner (see Lemma \ref{lem-outer}).

\subsection*{Fullness and Connes' $\boldsymbol{\tau}$ invariant}

Whenever $G$ is a locally compact group and $\rho : G \curvearrowright H_\R$ is a strongly continuous orthogonal representation, we define $\tau(\rho)$ as the weakest topology on $G$ that makes $\rho$ continuous. When $G$ is countable, we simply denote by $\tau_G$ the discrete topology on $G$. Following \cite{Co74}, we say that a factor with separable predual $M$ is {\em full} if the subgroup of inner automorphisms $\Inn(M)$ is closed in the group of all $\ast$-automorphisms $\Aut(M)$. If $M$ is full,  Connes' $\tau$ invariant $\tau(M)$ is defined as the weakest topology on $\R$ that makes the modular homomorphism $\delta_M : \R \to \Out(M)$ continuous. 

Our second result shows that the extension $\Gamma(U, \pi)\dpr$ is a full factor whenever $\pi$ is faithful and $\pi(G)$ is discrete in $\mathcal O(H_\R)$ with respect to the strong topology. This result extends \cite[Theorem A]{Ho12b} to the type ${\rm III}$ setting. Assuming moreover that $G$ is infinite and that the weakest topology on $\R \times G$ that makes the representation $\rho : \R \times G \curvearrowright H_\R$ continuous is $\tau(U) \times \tau_G$, we can compute Connes' invariant $\tau(M)$. This phenomenon is unique to the type ${\rm III}$ setting and has no analogue in the realm of type ${\rm II_1}$ factors.

\begin{letterthm}\label{letterthm-full}
Let $U : \R \curvearrowright H_\R$ be any strongly continuous orthogonal representation with $\dim H_\R \geq 2$. Let $G$ be any countable group and $\pi : G \curvearrowright H_\R$ any faithful orthogonal representation such that $[U, \pi] = 0$.  Define the strongly continuous orthogonal representation $\rho : \R \times G \curvearrowright H_\R$ by $\rho_{(t, g)} = U_t \pi_g$ for every $t \in \R$ and every $g \in G$. Put $M = \Gamma(U, \pi)\dpr$. 
\begin{itemize}
\item [$(\rm i)$] Assume that $\tau(\pi) = \tau_G$. Then $M$ is a full factor. 

\item [$(\rm ii)$] Assume that $G$ is infinite and that $\tau(\rho) =  \tau(U) \times \tau_G$. Then $M$ is a full factor and $\tau(M) = \tau(U)$.
\end{itemize}
\end{letterthm}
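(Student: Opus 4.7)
Factoriality in both parts is immediate from Theorem \ref{letterthm-type}$(\rm i)$: faithfulness of $\pi$ together with $\tau(\pi) = \tau_G$ being the discrete topology forces $\pi_g \neq 1$ for every $g \in G \setminus \{e\}$, and in particular for every $g \in \FC(G) \setminus \{e\}$. For the fullness statements I would set up Shlyakhtenko's free $s$-malleable deformation. Doubling $(H_\R, U)$ to $(H_\R \oplus H_\R, U \oplus U)$ produces a larger free Araki--Woods factor $\widetilde N \supset N = \Gamma(H_\R, U)\dpr$, and the one-parameter group of rotations on the two copies of $H_\R$ implements a $\varphi_U$-preserving one-parameter group $(\alpha_s)_{s \in \R}$ of automorphisms of $\widetilde N$. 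Since these rotations commute with the diagonal extension $\pi \oplus \pi$, the deformation lifts to $\widetilde M = \widetilde N \rtimes G \supset M$ and commutes with the canonical conditional expectation $E_M : \widetilde M \to M$.

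For part (i), I would take a uniformly bounded centralizing sequence $(x_n)$ in $M$ and aim to show $\|x_n - \varphi(x_n) 1\|_\varphi^\sharp \to 0$. Popa's spectral gap principle applied to $(\alpha_s)$ on $\widetilde M$ yields $\lim_{s \to 0} \limsup_n \|\alpha_s(x_n) - x_n\|_\varphi = 0$, using that $(\alpha_s)$ is weakly mixing relative to $M$ as soon as the Bogoljubov automorphisms are outer (cf.\ Lemma \ref{lem-outer}). A transference argument for the free malleable deformation in the spirit of \cite{Ho12b}, adapted to the type ${\rm III}$ setting via $E_M$, then forces the Fourier coefficients of $x_n$ along the crossed product decomposition $M = N \rtimes G$ at any $g \neq e$ to vanish asymptotically; the hypothesis $\tau(\pi) = \tau_G$ is decisive here, as discreteness of $\pi(G)$ in $\mathcal O(H_\R)$ prevents any nontrivial group element from being absorbed into the deformation. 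Once $(x_n)$ lies asymptotically in $N$, fullness of the free Araki--Woods factor $N$ (Shlyakhtenko \cite{Sh96, Sh97b}, Vaes \cite{Va04}) closes the argument.

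For part (ii), the inclusion $\tau(M) \leq \tau(U)$ is routine: if $U_{t_n} \to 1_{H_\R}$ strongly, then $\sigma_{t_n}^{\varphi_U}$ converges to $\id_N$ pointwise on $\rL^2(N)$, and extending by the identity on the canonical implementing unitaries yields $\sigma_{t_n}^\varphi \to \id_M$ in $\Aut(M)$. For the reverse inclusion, suppose $\sigma_{t_n}^\varphi = \Ad(u_n) \circ \beta_n$ with $\beta_n \to \id_M$ in $\Aut(M)$ and $u_n \in \mathcal U(M)$. Decomposing $u_n = \sum_{g \in G} a_n^{(g)} v_g$ along $M = N \rtimes G$ and exploiting the intertwining identities $u_n \Gamma(\xi) u_n^* \approx \Gamma(U_{t_n} \xi)$ for $\xi \in H_\R$, combined with spectral-gap rigidity of $(\alpha_s)$ applied to the sequence $(u_n)$, I would extract a subsequence along which $u_n$ concentrates on a single mode $v_{g_n}$. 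Substituting back yields $\pi_{g_n} U_{t_n} \to 1_{H_\R}$ strongly, i.e.\ $\rho_{(t_n, g_n^{-1})} \to 1_{H_\R}$ in $\mathcal O(H_\R)$. The hypothesis $\tau(\rho) = \tau(U) \times \tau_G$ then forces $g_n = e$ for large $n$ and $U_{t_n} \to 1_{H_\R}$, giving $t_n \to 0$ in $\tau(U)$.

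The main obstacle will be the Fourier concentration step in (ii): extracting from $(u_n)$ a subsequence concentrated on a single canonical unitary $v_{g_n}$ modulo small error in $\|\cdot\|_\varphi^\sharp$. This is the type ${\rm III}$ counterpart of Connes' $2 \times 2$-matrix trick for computing $\tau$, and will require carefully combining the deformation/rigidity input from $(\alpha_s)$ with an asymptotic analysis of $1$-cocycles for the extended modular flow on $\widetilde M$ together with the structure of spectral subspaces of $\sigma^{\varphi_U}$ in $N$.
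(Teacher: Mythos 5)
There is a genuine gap, and it lies exactly where your argument does its real work: the claimed concentration of central sequences on $N$. The malleable deformation you propose (rotation between the two copies of $H_\R$, extended to $\widetilde M = \widetilde N \rtimes G$) acts \emph{trivially} on $\rL(G)$ and commutes with the $G$-action, so any spectral gap/transversality argument built on it can only locate asymptotically invariant sequences \emph{relative to $\rL(G)$} — this is precisely what Theorems \ref{appendix-thm-1} and \ref{appendix-thm-2} of the paper deliver (intertwining into, or relative amenability over, the core of $\rL(G) \ovt N$). It cannot see the group modes at all, hence cannot force the Fourier coefficients of a central sequence at $g \neq e$ to vanish. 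If you try to kill those coefficients directly, the commutation relations only give $\|(x_n)^g \sigma^\pi_g(y) - y (x_n)^g\|_\varphi \to 0$ with $(x_n)^g \in N$, and ruling out such asymptotic intertwiners between $\sigma^\pi_g$ and $\id_N$ is exactly the requirement that $\sigma^\pi(G)$ be discrete in $\Out(N)$ — Marrakchi's hypothesis, strictly stronger than $\tau(\pi) = \tau_G$, which the paper explicitly avoids (its equivalence with $\tau(\pi)=\tau_G$ is only a corollary of the theorem, and only for amenable $G$). So your route ``first reduce to $N$, then use fullness of $N$'' either begs the question or proves a weaker statement. The same directional problem undermines the single-mode concentration of $(u_n)$ in part (ii), which you yourself flag as the main obstacle: the tool you propose cannot supply it.

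The paper's proof goes the opposite way and uses no deformation. Popa-style asymptotic orthogonality with a weakly mixing analytic vector $\xi \in K_{\an}^{\wm}$ (Lemmas \ref{lem-vanishing} and \ref{lem-key}, via the Wick formula, $\varepsilon$-orthogonality of the subspaces $\cX(\pi_g(L))$, and ultraproducts) shows that any $x = (x_n)^\omega \in \rL(G)' \cap M^\omega$ satisfying $\lim_{n\to\omega}\|x_n W(U_{t_n}\xi) - W(\xi)x_n\|_\varphi = 0$ lies in $\rL(G)^\omega$; only \emph{then} does discreteness enter, through a uniform gap estimate ($\exists \kappa>0$, $y_1,\dots,y_m \in N$ with $\sum_k \|\sigma^\pi_g(y_k)-y_k\|_\varphi^2 \geq \kappa$ for all $g \neq e$, and its $(t,g)$-uniform version \eqref{eq-gap} derived from $\tau(\rho)=\tau(U)\times\tau_G$ for part (ii)), applied to \emph{scalar} Fourier coefficients of elements of $\rL(G)$, where the norm decouples and triviality follows. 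Note also that part (i) for finite $G$ (where $H_\R^{\wm}$ may vanish) is handled separately in the paper via \cite[Theorem B]{Ma16}, a case your sketch does not address. Your endgame for (ii) ($\pi_{g_n}U_{t_n} \to 1$ forces $g_n = e$ and $t_n \to 0$ in $\tau(U)$) is the right heuristic, but without a correct mechanism for the concentration step the proof does not go through.
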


The proof of Theorem \ref{letterthm-full} uses a combination of Popa's asymptotic orthogonality property \cite{Po83}, $\varepsilon$-orthogonality techniques \cite{Ho12a, Ho12b} and modular theory of ultraproduct von Neumann algebras \cite{AH12}.

We should point out that Theorem \ref{letterthm-full} does not rely on Marrakchi's result \cite[Theorem B]{Ma16} (see also \cite{Jo81} for the tracial case). Recall that for any full factor $N$, any countable group  $G$ and any outer action $\sigma : G \curvearrowright N$ such that the image of $\sigma(G)$ is discrete in $\Out(N)$, the crossed product $M = N \rtimes G$ is a full factor by \cite[Theorem B]{Ma16}. The condition that the image of $\sigma(G)$ is discrete in $\Out(N)$ is rather difficult to check in general as it requires to understand the quotient group $\Out(N)$. For the class of Bogoljubov actions $\sigma^\pi : G \curvearrowright N$, where $N = \Gamma(H_\R, U)\dpr$, our Theorem \ref{letterthm-full} shows that the crossed product $N \rtimes G$ is a full factor under the weaker assumption that $\pi(G)$ is discrete in $\mathcal O(H_\R)$ with respect to the strong topology, or equivalently, that $\sigma^\pi(G)$ is discrete in $\Aut(N)$ with respect to the $u$-topology. 

When the countable group $G$ is {\em amenable}, combining our Theorem \ref{letterthm-full}, \cite[Theorem 3.6]{HMV16} and Marrakchi's very recent result \cite[Theorem A]{Ma18}, we obtain the following characterization.

\begin{corollary}
Let $U : \R \curvearrowright H_\R$ be any strongly continuous orthogonal representation with $\dim H_\R \geq 2$. Let $G$ be any amenable countable group and $\pi : G \curvearrowright H_\R$ any faithful orthogonal representation such that $[U, \pi] = 0$. Put $N = \Gamma(H_\R, U)\dpr \subset \Gamma(U, \pi)\dpr = M$. 

The following assertions are equivalent.
\begin{itemize}
\item [$(\rm i)$] $\tau(\pi) = \tau_G$.
\item [$(\rm ii)$] The image of $\sigma^\pi(G)$ is discrete in $\Out(N)$.
\item [$(\rm iii)$] $M$ is a full factor.
\item [$(\rm iv)$] For every directed set $I$ and every cofinal ultrafilter $\omega$ on $I$, we have $N' \cap M^\omega = \C 1$.
\end{itemize}
\end{corollary}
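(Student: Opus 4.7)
The plan is to establish the cycle of implications $(\text{i}) \Rightarrow (\text{iii}) \Rightarrow (\text{ii}) \Rightarrow (\text{i})$ together with the equivalence $(\text{iii}) \Leftrightarrow (\text{iv})$. The first implication $(\text{i}) \Rightarrow (\text{iii})$ is precisely Theorem \ref{letterthm-full}(i), which has already been established in the paper. For $(\text{iii}) \Rightarrow (\text{ii})$, I would invoke Marrakchi's recent result \cite[Theorem A]{Ma18}: since $\dim H_\R \geq 2$, the free Araki--Woods factor $N = \Gamma(H_\R, U)\dpr$ is itself a full factor by Shlyakhtenko's work (a full free group factor when $U = 1$ and a full factor of type ${\rm III}$ when $U \neq 1$). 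Marrakchi's theorem, applied to the crossed product by the amenable countable group $G$, then asserts that fullness of $M = N \rtimes G$ forces the image of $\sigma^\pi(G)$ in $\Out(N)$ to be discrete.

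For $(\text{ii}) \Rightarrow (\text{i})$, I would argue by contraposition. Suppose $\tau(\pi) \neq \tau_G$; then there exists a sequence $(g_n)_n$ in $G \setminus \{e\}$ with $\pi_{g_n} \to 1_{H_\R}$ strongly in $\mathcal O(H_\R)$. By continuity of the free Bogoljubov functor $\mathcal O(H_\R) \to \Aut(N)$ from the strong topology to the $u$-topology, the induced Bogoljubov automorphisms satisfy $\sigma^\pi_{g_n} \to \id_N$ in $\Aut(N)$. Since $\pi$ is faithful, each $\pi_{g_n}$ is a nontrivial orthogonal transformation commuting with $U$, so by Lemma \ref{lem-outer} each $\sigma^\pi_{g_n}$ is outer. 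Hence the image of $\sigma^\pi(G)$ in $\Out(N)$ contains a sequence of nontrivial elements converging to the identity, contradicting discreteness and yielding (i).

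For the equivalence $(\text{iii}) \Leftrightarrow (\text{iv})$, the direction $(\text{iv}) \Rightarrow (\text{iii})$ is immediate: the inclusion $M' \cap M^\om \subseteq N' \cap M^\om$, valid for every cofinal ultrafilter $\om$ on any directed set, combined with (iv) yields $M' \cap M^\om = \C 1$, which by the Ando--Haagerup ultraproduct characterization \cite{AH12} is equivalent to $M$ being a full factor. The reverse direction $(\text{iii}) \Rightarrow (\text{iv})$ is the content of \cite[Theorem 3.6]{HMV16}, applied to the inclusion $N \subseteq M = N \rtimes G$: fullness of $M$ together with fullness of the globally $\sigma^\pi(G)$-invariant subalgebra $N$ is enough to kill the relative commutant $N' \cap M^\om$ for every cofinal ultrafilter.

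The main obstacle is to verify that the hypotheses of \cite[Theorem A]{Ma18} and \cite[Theorem 3.6]{HMV16} are genuinely met in the Bogoljubov setup. This amounts to three checks: fullness of $N$ uniformly across the tracial case $U = 1$ and the type ${\rm III}$ case $U \neq 1$; outerness of the action $\sigma^\pi$ on $N$ (which is ensured by faithfulness of $\pi$ and Lemma \ref{lem-outer}); and the fact that the natural topologies on $\mathcal O(H_\R)$, $\Aut(N)$ and $\Out(N)$ are all compatibly related through the Bogoljubov functor, so that discreteness of $\pi(G)$ in $\mathcal O(H_\R)$ is genuinely equivalent to discreteness of $\sigma^\pi(G)$ in $\Out(N)$.
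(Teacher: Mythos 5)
Most of your cycle is sound and is in the spirit of the paper's one-line assembly of this corollary: (i)$\Rightarrow$(iii) is exactly Theorem \ref{letterthm-full}(i); your (ii)$\Rightarrow$(i) by contraposition (continuity of the Bogoljubov map $\mathcal O(H_\R)\to\Aut(N)$ plus Lemma \ref{lem-outer} to guarantee the approximating automorphisms are nontrivial in $\Out(N)$) is correct; (iv)$\Rightarrow$(iii) is immediate from $M'\cap M^\omega\subset N'\cap M^\omega$ (for the characterization of fullness by triviality of $\omega$-central sequences in the type ${\rm III}$ setting the accurate references are \cite{AH12, Ma16}, a minor attribution point); and (iii)$\Rightarrow$(ii) is indeed where \cite[Theorem A]{Ma18} and the amenability of $G$ enter, with fullness of $N$ and outerness of $\sigma^\pi$ checked as you do.

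The genuine gap is your step (iii)$\Rightarrow$(iv). The principle you attribute to \cite[Theorem 3.6]{HMV16} --- that fullness of $M$ together with fullness of a subalgebra $N\subset M$ with expectation kills $N'\cap M^\omega$ --- is false as stated: for $M=N\ovt P$ with $P$ any full factor, $M$ is full and $N$ is full, yet $N'\cap M^\omega\supset 1\ovt P\neq \C1$. Even granting the crossed product structure $M=N\rtimes G$, your argument uses neither the amenability of $G$ nor the discreteness obtained in (ii), and it cannot: whenever the image of $\sigma^\pi(G)$ fails to be discrete in $\Out(N)$, choosing $g_n\neq e$ and $u_n\in\mathcal U(N)$ with $\Ad(u_n)\circ\sigma^\pi_{g_n}\to\id_N$ produces unitaries $u_nu_{g_n}$ which asymptotically preserve the canonical state and satisfy $\rE_N(u_nu_{g_n})=0$, hence define a non-scalar element of $N'\cap M^\omega$; so (iv) always implies (ii), and an amenability-free proof of (iii)$\Rightarrow$(iv) would yield (iii)$\Rightarrow$(ii) for arbitrary countable groups, which fails (e.g.\ for an ICC residually finite property (T) group with $\pi$ the restriction of a faithful representation of its profinite completion, the crossed product is full by a spectral gap argument although $\pi(G)$ is not discrete). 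The correct route is to pass through (ii): after obtaining discreteness in $\Out(N)$ from \cite[Theorem A]{Ma18} (this is the amenability-dependent content), one derives $N'\cap M^\omega=\C1$ from discreteness together with fullness of $N$ via the Marrakchi-type relative spectral gap argument of \cite{Ma16, Ma18}, and the role of \cite[Theorem 3.6]{HMV16} is only to upgrade such statements from nonprincipal ultrafilters on $\N$ to arbitrary cofinal ultrafilters on directed sets, not to assert that an inclusion of full factors has trivial asymptotic relative commutant.
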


\subsection*{Amenable and Gamma absorption}
 
Next, we investigate absorption properties of the inclusion $\rL(G) \subset \Gamma(U, \pi)\dpr$ with respect to amenable and/or Gamma subalgebras. Recall that a $\sigma$-finite von Neumann algebra $N$ is said to have {\em property Gamma} if the central sequence algebra $N' \cap N^\omega$ is diffuse for some (or any) nonprincipal ultrafilter $\omega \in \beta(\N) \setminus \N$. Recall also that a von Neumann subalgebra $P \subset M$ is said to be {\em with expectation} if there exists a faithful normal conditional expectation $\rE_P : M \to P$. Our next result extends and strengthens \cite[Theorems D and E]{Ho12b} to the type ${\rm III}$ setting.

\begin{letterthm}\label{letterthm-maximal}
Let $U : \R \curvearrowright H_\R$ be any strongly continuous orthogonal representation. Let $G$ be any  countable group and $\pi : G \curvearrowright H_\R$ any orthogonal representation such that $[U, \pi] = 0$. Put $M = \Gamma(U, \pi)\dpr$.
\begin{itemize}
\item [$(\rm i)$] Assume that $\pi : G \curvearrowright H_\R$ is weakly mixing. Let $\rL(G) \subset P \subset M$ be any intermediate von Neumann subalgebra with expectation such that $P$ is amenable relative to $\rL(G)$ inside $M$. Then $P = \rL(G)$.

\item [$(\rm ii)$] Assume that $\pi : G \curvearrowright H_\R$ is mixing. Let $P \subset M$ be any von Neumann subalgebra with expectation and with property Gamma such that $P \cap \rL(G)$ is diffuse. Then $P \subset \rL(G)$.
\end{itemize}
\end{letterthm}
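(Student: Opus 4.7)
The argument will use the free malleable deformation of $M$ in the type III setting, combined with Popa's deformation/rigidity machinery adapted via the Ando--Haagerup ultraproduct.

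\textbf{Setup of the deformation.} Let $\tilde H_\R = H_\R \oplus H_\R$, $\tilde U = U \oplus U$, and $\tilde\pi = \pi \oplus \pi$, and set $\Mtil = \Gamma(\tilde U, \tilde\pi)\dpr$, so that $M \subset \Mtil$ is an inclusion with a state-preserving normal faithful conditional expectation $E_M$. The orthogonal rotations $R_t$ on $\tilde H_\R$ and the flip $S(\xi, \eta) = (\xi, -\eta)$ commute with $\tilde U$ and $\tilde \pi$, and therefore induce an s-malleable deformation $(\alpha_t, \beta)$ of $M \subset \Mtil$ such that $\alpha_t|_{\rL(G)} = \id_{\rL(G)}$, $\beta|_M = \id_M$, $\beta \alpha_t \beta = \alpha_{-t}$, together with the usual transversality inequality in the $\|\cdot\|_{\tilde\varphi}^\sharp$-norm.

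\textbf{Part (i).} A standard free probability computation shows that the $M$-$M$ bimodule $L^2(\Mtil) \ominus L^2(M)$ is weakly contained, as a Connes correspondence, in $L^2(M) \otimes_{\rL(G)} L^2(M)^{\oplus\infty}$. The assumption that $P$ is amenable relative to $\rL(G)$ inside $M$ then places us in the scope of a Popa/Houdayer--Raum deformation dichotomy adapted to type III: either $P \preceq_M \rL(G)$ in Popa's intertwining-by-bimodules sense, or $\alpha_t \to \id$ uniformly on $(P)_1$ in $\|\cdot\|^\sharp$. In the second alternative, transversality together with s-malleability (via the flip $\beta$) upgrades uniform convergence to $P \preceq_M \rL(G)$ as well; in either case $P \preceq_M \rL(G)$. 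Finally, since $\pi$ is weakly mixing, the Bogoljubov action $\sigma^\pi$ on $\Gamma(H_\R, U)\dpr$ is weakly mixing, and the standard argument then forces any intermediate subalgebra $\rL(G) \subset Q \subset M$ with $Q \preceq_M \rL(G)$ to equal $\rL(G)$. Applied to $Q = P$ this gives $P = \rL(G)$.

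\textbf{Part (ii).} Here the deformation is not needed. Mixing of $\pi$ implies mixing of $G \curvearrowright \Gamma(H_\R, U)\dpr$, and the type III version of Popa's mixing criterion then gives: for every diffuse subalgebra $A \subset \rL(G)$ with expectation, $A' \cap M^\omega \subset \rL(G)^\omega$. Apply this to $A = P \cap \rL(G)$; since $P' \cap P^\omega \subset A' \cap M^\omega$ and $P' \cap P^\omega$ is diffuse by property Gamma, we obtain $P' \cap P^\omega \subset \rL(G)^\omega$. Exploiting diffuseness, choose a unitary $v \in P' \cap P^\omega \cap \rL(G)^\omega$, represented by a central sequence $(v_n) \in \cU(P)$ with $v_n \to 0$ weakly, and also by $(w_n) \in \cU(\rL(G))$ with $\|v_n - w_n\|_{\tilde\varphi}^\sharp \to 0$ (so $w_n \to 0$ weakly along a subsequence). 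For any $x \in P$, write $x = b + y$ with $b = E_{\rL(G)}(x)$ and $y = x - b \in M \ominus \rL(G)$. Bimodularity of $E_{\rL(G)}$ over $\rL(G)$ yields $E_{\rL(G)}([w_n, y]) = 0$, so $[w_n, b] \in \rL(G)$ and $[w_n, y] \perp \rL(G)$ in $L^2(M, \varphi)$, giving the Pythagorean identity
\[
\|[w_n, x]\|_\varphi^2 = \|[w_n, b]\|_\varphi^2 + \|[w_n, y]\|_\varphi^2.
\]
Approximate centrality of $(w_n)$ in $P$ forces $\|[w_n, x]\|_\varphi \to 0$, hence $\|[w_n, y]\|_\varphi \to 0$, i.e., $w_n y w_n^* \to y$ in $L^2$. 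On the other hand, mixing combined with $w_n \to 0$ weakly forces $w_n y w_n^* \to 0$ weakly (standard computation using the Fourier expansion of $y$ and the asymptotic orthogonality $\langle \sigma^\pi_{g_n}(a), a\rangle \to 0$). Uniqueness of limits gives $y = 0$, whence $x \in \rL(G)$ and $P \subset \rL(G)$.

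\textbf{Main obstacle.} The principal technical difficulty is the careful adaptation to the type III non-tracial setting of the deformation/rigidity dichotomy used in part (i) and of Popa's mixing criterion used in part (ii). This requires working consistently with the Ando--Haagerup ultraproduct $M^\omega$, the $\|\cdot\|_{\tilde\varphi}^\sharp$-norm in place of the trace $L^2$-norm, and with state-preserving conditional expectations throughout; the diagonal extraction of $(w_n) \in \cU(\rL(G))$ tending to zero weakly while staying asymptotically equal to a central sequence in $P$ also must be handled carefully in the $\sigma$-finite non-tracial framework.
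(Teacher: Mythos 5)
Both parts of your proposal rest on steps that are not actual theorems, and the paper's proof goes through entirely different mechanisms. In part (i), the dichotomy you invoke does not exist: relative amenability of $P$ with respect to $\rL(G)$ is not a hypothesis that produces ``either $P \preceq_M \rL(G)$ or the deformation converges uniformly on $\Ball(P)$''. Spectral gap arguments derive uniform convergence from \emph{non}-amenability (of a suitable commutant) relative to $\rL(G)$, and the Popa--Vaes/Ozawa--Popa type dichotomies that do take relative amenability as input conclude, in the second branch, that the \emph{normalizer} of $P$ is amenable relative to $\rL(G)$ --- which here is no information at all, since $P$ itself is assumed amenable relative to $\rL(G)$. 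As you have written it, both branches end in $P \preceq_M \rL(G)$, so your argument amounts to asserting that relative amenability implies intertwining, which is false in general (any diffuse amenable subalgebra with expectation of the free part $\Gamma(H_\R,U)\dpr$ is amenable relative to $\rL(G)$, and the whole point of the deformation machinery, e.g.\ Theorem \ref{appendix-thm-1}, is that uniform convergence/intertwining is a strong conclusion that such subalgebras do not satisfy in general). Moreover, even granting $P \preceq_M \rL(G)$, your closing ``standard argument'' that an intermediate $\rL(G) \subset Q \subset M$ with $Q \preceq_M \rL(G)$ must equal $\rL(G)$ under mere \emph{weak} mixing is not available off the shelf: the subalgebra-level mixing results (Theorem \ref{thm-mixing} (ii)--(iii)) require genuine mixing, and in the type III with-expectation framework this step needs a proof. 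The paper uses relative amenability in a completely different way (Krogager--Vaes): one takes the norm-one projection $\Phi : \langle M, \rL(G)\rangle \to P$ with $\Phi|_M$ the state-preserving expectation, shows by averaging over weakly mixing group elements and an $\varepsilon$-orthogonality estimate that $\Phi(\ell(e)\ell(e)^*) = 0$ for every unit vector $e \in K_\R$ (Claim \ref{claim-vanishing}), and then Kadison's inequality and the Wick formula force $\Phi(M \ominus \rL(G)) = 0$, hence $P = \rL(G)$.

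In part (ii) the cornerstone claim, ``mixing gives $A' \cap M^\omega \subset \rL(G)^\omega$ for every diffuse $A \subset \rL(G)$ with expectation'', is not a consequence of mixing of the inclusion, and there is no such ``type III Popa mixing criterion''. Concretely, for a mixing p.m.p.\ action of $\Z$ (e.g.\ a Bernoulli shift) the inclusion $\rL(\Z) \subset \rL^\infty(X) \rtimes \Z$ is mixing, yet, $\Z$ being amenable, the action is never strongly ergodic: asymptotically invariant unit vectors $a_n \in \rL^\infty(X) \ominus \C 1$ give a nonzero element of $\rL(\Z)' \cap M^\omega$ orthogonal to $\rL(\Z)^\omega$. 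So the ultrapower statement is an asymptotic-orthogonality-type property that would have to be extracted from the specific free structure, and it is proved neither in your note nor in the paper (Lemma \ref{lem-key} needs the extra hypothesis of asymptotic commutation with some $W(\xi)$ and does not apply to a general element of $(P \cap \rL(G))' \cap M^\omega$). A secondary but genuine issue: a Haar unitary $v \in P' \cap P^\omega \cap \rL(G)^\omega$ need not admit representing unitaries $w_n \in \mathcal U(\rL(G))$ with $w_n \to 0$ weakly (constant sequences are a counterexample), so the step ``mixing kills $w_n y w_n^*$ weakly'' requires a reindexing argument you have not supplied. The paper's route is different: it first uses the property Gamma structure theorem \cite[Theorem 3.1]{HU15b} to write $P$ as $\bigvee_k ((A_k)' \cap P)$ with $A_k$ diffuse abelian in a centralizer, then combines the malleable deformation on the continuous core (Theorems \ref{appendix-thm-1}, \ref{appendix-thm-2}, \ref{appendix-thm-3}) with the mixing intertwining results of Appendix \ref{appendix-mixing} and a maximal projection argument to obtain $P \lessdot_M \rL(G)$, and only then concludes $P \subset \rL(G)$ by the Krogager--Vaes-type Theorem \ref{thm-maximal} (ii).
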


The proof of Theorem \ref{letterthm-maximal} relies on \cite[Theorem 5.1]{KV16} and \cite[Theorem 3.1]{HU15b} as well as mixing techniques for inclusions of von Neumann algebras (see Appendix \ref{appendix-mixing}). Note that  item $(\rm ii)$ of Theorem \ref{letterthm-maximal} can also be regarded as a strengthening of item $(\rm i)$ of Theorem \ref{letterthm-full} in the case when $\pi$ is {\em mixing}.

Using Theorem \ref{letterthm-maximal} as well as Theorems \ref{thm-prime} and \ref{thm-cartan}, we obtain examples of type ${\rm III_0}$ factors, with prescribed Connes' $\rT$ invariant, that are prime, have no Cartan subalgebra and possess a maximal amenable abelian subalgebra.

\begin{letterapp}\label{application-1}
Let $U : \R \curvearrowright H_\R$ be any mixing strongly continuous orthogonal representation. Let $G \subset \R$ be any countable dense subgroup and put $\pi = U|_G$. 

Then $M = \Gamma(U, \pi)\dpr$ is a type ${\rm III_0}$ factor such that $\rT(M) = G$. Moreover, $M$ is prime, $M$ has no Cartan subalgebra and $\rL(G) \subset M$ is maximal amenable.
\end{letterapp}

We would like to point out that all previously known examples of maximal amenable abelian subalgebras with expectation $A \subset M$  in type ${\rm III}$ factors (see \cite{Ho14, HU15a, BH16}) require the intermediate amenable subalgebra $A \subset P \subset M$ to be also with expectation. That is why the terminology ``maximal amenable with expectation" was used in \cite{Ho14, HU15a, BH16}.  Application \ref{application-1} provides the first concrete class of abelian subalgebras with expectation in type ${\rm III}$ factors that are {\em genuinely} maximal amenable, that is, with no further assumption on the intermediate amenable subalgebra $A \subset P \subset M$. Indeed, for the inclusions in Application \ref{application-1}, any intermediate von Neumann subalgebra $\rL(G) \subset P \subset M$ is automatically with expectation and so we may apply Theorem \ref{letterthm-maximal}.

\subsection*{Strong solidity}

Following \cite{OP07, BHV15}, a $\sigma$-finite von Neumann algebra $N$ is said to be {\em strongly solid} if for any diffuse amenable subalgebra with expectation $Q \subset N$, the normalizer $\mathcal N_N(Q)\dpr \subset N$ of $Q$ inside $N$ stays amenable, where $\mathcal N_N(Q) = \left\{ u \in \mathcal U(N) \mid uQu^* = Q\right\}$. In their breakthrough article \cite{OP07}, Ozawa--Popa famously proved that free group factors are strongly solid. These were the first class of strongly solid type ${\rm II_1}$ factors in the literature. Recently, generalizing the methods of Ozawa--Popa, Boutonnet--Houdayer--Vaes  \cite{BHV15} showed that free Araki--Woods factors are strongly solid, thus obtaining the first class of strongly solid type ${\rm III}$ factors.

Our next result shows that when $G$ is amenable and $\pi$ is faithful and mixing, the extension $\Gamma(U, \pi)\dpr$ is strongly solid. We refer the reader to \cite{Ca18, Is18} for other examples of strongly solid type ${\rm III}$ factors.

\begin{letterthm}\label{letterthm-strongly-solid}
Let $U : \R \curvearrowright H_\R$ be any strongly continuous orthogonal representation. Let $G$ be any amenable countable group and $\pi : G \curvearrowright H_\R$ any faithful mixing orthogonal representation such that $[U, \pi] = 0$.

Then $\Gamma(U, \pi)\dpr$ is a strongly solid factor. 
\end{letterthm}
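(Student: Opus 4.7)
The plan is to adapt the Ozawa--Popa--Boutonnet--Houdayer--Vaes deformation/rigidity strategy to the crossed product $M = N \rtimes_{\sigma^\pi} G$, where $N = \Gamma(H_\R, U)\dpr$, using the strong solidity of $N$ from \cite{BHV15}, the amenability of $G$, and the mixing of $\pi$. Fix a diffuse amenable von Neumann subalgebra with expectation $Q \subset M$ and set $P = \mathcal N_M(Q)\dpr$; the goal is to show that $P$ is amenable. Three ingredients enter. First, amenability of $G$ ensures that $\rL(G)$ is amenable and that $M$ is amenable relative to $N$ inside $M$. Second, mixing of $\pi$ implies that the Bogoljubov action $\sigma^\pi : G \curvearrowright (N, \vphi_U)$ is mixing (a direct Fock-space computation), which makes the inclusion $\rL(G) \subset M$ a mixing inclusion in the sense of Appendix \ref{appendix-mixing}. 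Third, Shlyakhtenko's free malleable deformation lifts to $M$: setting $\tilde H_\R = H_\R \oplus H_\R$, $\tilde U = U \oplus U$, $\tilde \pi = \pi \oplus \pi$, and $\tilde M = \Gamma(\tilde U, \tilde \pi)\dpr$, the rotation automorphisms $\alpha_t$ of $\Gamma(\tilde H_\R, \tilde U)\dpr$ commute with $\sigma^{\tilde \pi}$ and thus extend to $\alpha_t \in \Aut(\tilde M)$ acting trivially on $\rL(G)$. The $M$-fixed-point algebra of $(\alpha_t)$ is exactly $\rL(G)$, and together with the natural period-two flip one obtains an $s$-malleable deformation of $M \subset \tilde M$.

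The argument now splits according to whether $Q$ intertwines into $\rL(G)$. If $Q \preceq_M \rL(G)$, the mixing character of $\rL(G) \subset M$ together with the normalizer theorem for mixing inclusions (Appendix \ref{appendix-mixing}) forces $P \preceq_M \rL(G)$; hence $P$ is amenable relative to $\rL(G)$ inside $M$, and since $\rL(G)$ is amenable, $P$ is amenable. Otherwise, Popa's spectral-gap dichotomy applied to $(\alpha_t)$, whose $M$-fixed-point algebra is $\rL(G)$, forces $\alpha_t \to \id$ uniformly on the unit ball of $Q$. The transversality of the $s$-malleable deformation, combined with the amenability of $Q$ and the weak compactness of the conjugation action $\mathcal N_M(Q) \curvearrowright Q$ in the Ozawa--Popa sense, then yields---via the type III ultraproduct framework of \cite{AH12} as implemented in \cite{BHV15}---that $P$ is amenable relative to $\rL(G)$ inside $M$. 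Once more, amenability of $\rL(G)$ upgrades this conclusion to the amenability of $P$.

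The main obstacle is to execute the transversality and weak-compactness analysis of the second case in the type III crossed-product setting: one must run the spectral-gap argument via Ando--Haagerup ultraproducts \cite{AH12} and verify that the weak-compactness machinery, together with the manipulation of relative amenability over $\rL(G)$ (not over the full $N$), goes through with respect to the non-tracial state $\vphi_U \circ \rE_N$. The required technical tools are essentially those already assembled for Theorems \ref{letterthm-full} and \ref{letterthm-maximal}, together with the mixing/normalizer machinery of Appendix \ref{appendix-mixing}, so a substantial part of the groundwork is already in place.
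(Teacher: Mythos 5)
Your outline reproduces the right ingredients (mixing of $\rL(G)\subset M$, the malleable rotation deformation with fixed-point algebra $\rL(G)$, amenability of $\rL(G)$), but the core of the argument has a genuine gap, and one step is stated backwards. In your second case ($Q\npreceq_M\rL(G)$) you claim that spectral gap ``forces $\alpha_t\to\id$ uniformly on the unit ball of $Q$''; this is the wrong direction: uniform convergence of the deformation on $\Ball(Q)$ is exactly what \emph{implies} intertwining into the fixed-point algebra (Theorem \ref{appendix-thm-1}), so it would contradict the case hypothesis rather than follow from it. What actually has to be shown in that case is that the deformation does \emph{not} converge uniformly on $Q$ and that this, via weak containment of the relevant bimodule in the coarse bimodule, makes the (stable) normalizer have an amenable direct summand. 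More seriously, you defer the real difficulty to the remark that the Ozawa--Popa weak-compactness/transversality machinery ``goes through'' with respect to the non-tracial state $\varphi_U\circ\rE_N$. It is precisely because this is not available that the paper does not argue at the type ${\rm III}$ level: it first reduces to the type ${\rm III_1}$ case, proves \emph{solidity} of $M$ (using Theorem \ref{appendix-thm-3} together with Theorem \ref{thm-mixing}), which permits the reduction, as in \cite{BHV15}, to a diffuse amenable $Q\subset M$ with $Q'\cap M=\mathcal Z(Q)$; it then passes to the semifinite algebra $\mathcal M=\core_{\varphi\otimes\psi}(M\ovt N)$ with $N$ an Araki--Woods type ${\rm III_1}$ factor, and uses Isono's transfer theorem \cite[Theorem A]{Is18} --- which needs the hypothesis $Q'\cap M=\mathcal Z(Q)$ --- to convert $Q\npreceq_M\rL(G)$ into $\mathcal Q\npreceq_{\mathcal M}\mathcal B$ at the level of cores, after which Theorem \ref{appendix-thm-1}, the complete metric approximation property of $q\mathcal Mq$ and \cite[Proposition 3.7]{BHV15} combined with \cite{PV11} finish the proof inside a type ${\rm II_1}$ corner. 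None of this architecture (preliminary solidity, the relative commutant condition, the passage to the core, the intertwining transfer) appears in your proposal, and without it the ``type ${\rm III}$ ultraproduct implementation'' you invoke is not an existing tool but the very thing that would have to be constructed.

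A secondary, more easily repaired, point: in your first case you pass from $P\preceq_M\rL(G)$ to ``$P$ is amenable.'' Intertwining only yields, via \cite[Proposition 4.10]{HI17}, a nonzero amenable direct summand $Pz$; to conclude you must either run the whole proof by contradiction with $P$ assumed to have no nonzero amenable direct summand (as the paper does), or add an exhaustion argument over central projections.
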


The proof of Theorem \ref{letterthm-strongly-solid} uses Popa's deformation/rigidity theory. More precisely, it relies on \cite[Theorem 3.7]{BHV15} and \cite[Theorem A]{Is18} as well as mixing techniques for inclusions of von Neumann algebras (see Appendix \ref{appendix-mixing}).

We use Theorem \ref{letterthm-strongly-solid} to obtain a new class of strongly solid type ${\rm III}$ factors, with prescribed Connes' invariants, that are not isomorphic to any free Araki--Woods factor. 

\begin{letterapp}\label{application-2}
Let $U : \R \curvearrowright H_\R$ be any strongly continuous orthogonal representation such that $U \neq 1_{H_\R}$ and such that $U$ has a nonzero invariant vector. Let $\pi : \Z \curvearrowright K_\R$ be any mixing orthogonal representation such that the spectral measure of $\bigoplus_{n \geq 1} \pi^{\otimes n}$ is singular with respect to the Haar measure on $\mathbf T$.

Put $M = \Gamma(U \otimes 1_{K_\R}, 1_{H_\R} \otimes \pi)\dpr$. Then $M$ is a strongly solid type ${\rm III}$ factor that has the complete metric approximation property and the Haagerup property and such that $\rT(M) = \ker(U)$ and $\tau(M) = \tau(U)$. Moreover, $M$ is not isomorphic to any free Araki--Woods factor.
\end{letterapp}

\subsection*{Acknowledgments} Cyril Houdayer is grateful to Yusuke Isono for thought-provoking discussions regarding \cite[Theorem A]{Is18} used in the proof of Theorem \ref{letterthm-strongly-solid}.

{\hypersetup{linkcolor=black}
\tableofcontents }

\section{Preliminaries}\label{section-preliminaries}

\subsection{Background on $\sigma$-finite von Neumann algebra}

For any von Neumann algebra $M$, we denote by $\mathcal Z(M)$ its centre, by $\mathcal U(M)$ its group of unitaries, by $\Ball(M)$ its unit ball with respect to the uniform norm $\|\cdot\|_\infty$ and by $(M, \rL^2(M), J, \rL^2(M)_+)$ its standard form \cite{Ha73}.  

Let $M$ be any $\sigma$-finite von Neumann algebra and $\varphi \in M_\ast$ any faithful state. We  write $\|x\|_\varphi = \varphi(x^* x)^{1/2}$ for every $x \in M$. Recall that on $\Ball(M)$, the topology given by $\|\cdot\|_\varphi$ 
coincides with the strong topology. We denote by $\xi_\varphi = \varphi^{1/2} \in \rL^2(M)_+$ the unique element such that $\varphi = \langle \, \cdot \, \xi_{\varphi}, \xi_{\varphi}\rangle$. Conversely, for every $\xi \in \rL^{2}(M)_{+}$, we denote by $\varphi_{\xi} = \langle \, \cdot \, \xi, \xi\rangle \in M_{\ast}$ the corresponding positive form. The mapping $M \to \rL^2(M) : x \mapsto x \xi_\varphi$ defines an embedding with dense image such that $\|x\|_\varphi = \|x \xi_\varphi\|$ for all $x \in M$.

Let $M$ be any $\sigma$-finite von Neumann algebra and $\varphi \in M_\ast$ any faithful state. We denote by $\sigma^\varphi$ the modular automorphism group of the state $\varphi$.  The centralizer $M_\varphi$ of the state $\varphi$ is by definition the fixed point algebra of $(M, \sigma^\varphi)$.  The {\em continuous core} of $M$ with respect to $\varphi$, denoted by $\core_\varphi(M)$, is the crossed product von Neumann algebra $M \rtimes_{\sigma^\varphi} \R$.  The natural inclusion $\pi_\varphi: M \to \core_\varphi(M)$ and the unitary representation $\lambda_\varphi: \R \to \core_\varphi(M)$ satisfy the {\em covariance} relation
$$
\forall x \in M, \forall t \in \R, \quad  \lambda_\varphi(t) \pi_\varphi(x) \lambda_\varphi(t)^*
  =
  \pi_\varphi(\sigma^\varphi_t(x)).
$$
Put $\rL_\varphi (\R) = \lambda_\varphi(\R)\dpr$. There is a unique faithful normal conditional expectation $\rE_{\rL_\varphi (\R)}: \core_{\varphi}(M) \to \rL_\varphi(\R)$ satisfying $\rE_{\rL_\varphi (\R)}(\pi_\varphi(x) \lambda_\varphi(t)) = \varphi(x) \lambda_\varphi(t)$. The faithful normal semifinite weight $\rL^\infty(\R)^+ \to [0, +\infty] : f \mapsto \int_{\R} \exp(-s)f(s) \, {\rm d}s$ gives rise to a faithful normal semifinite weight $\Tr_\varphi$ on $\rL_\varphi(\R)$ via the Fourier transform. The formula $\Tr_\varphi = \Tr_\varphi \circ \rE_{\rL_\varphi (\R)}$ extends it to a faithful normal semifinite trace on $\core_\varphi(M)$.

Because of Connes' Radon--Nikodym cocycle theorem \cite[Th\'eor\`eme 1.2.1]{Co72} (see also \cite[Theorem VIII.3.3]{Ta03}), the semifinite von Neumann algebra $\core_\varphi(M)$ together with its trace $\Tr_\varphi$ does not depend on the choice of $\varphi$ in the following precise sense. If $\psi$ is another faithful normal state on $M$, there is a canonical surjective $\ast$-isomorphism
$\Pi_{\varphi,\psi} : \core_\psi(M) \to \core_{\varphi}(M)$ such that $\Pi_{\varphi,\psi} \circ \pi_\psi = \pi_\varphi$ and $\Tr_\varphi \circ \Pi_{\varphi,\psi} = \Tr_\psi$. Note however that $\Pi_{\varphi,\psi}$ does not map the subalgebra $\rL_\psi(\R) \subset \core_\psi(M)$ onto the subalgebra $\rL_\varphi(\R) \subset \core_\varphi(M)$ (and thus we use the symbol $\rL_\varphi(\R)$ instead of the usual $\rL(\R)$).

\subsection{Ultraproduct von Neumann algebras}

Let $M$ be any $\sigma$-finite von Neumann algebra. Let $J$ be any nonempty directed set and ${\omega}$ any {\em cofinal} ultrafilter on $J$, that is, for all $j_0 \in J$, we have $\{j \in J : j \geq j_0\} \in \omega$. Define
\begin{align*}
\mathfrak I_{\omega}(M) &= \left\{ (x_j)_j \in \ell^\infty(J, M) \mid \lim_{j \to \omega} \|x_j \zeta\| = \lim_{j \to \omega} \|\zeta x_{j}\| =  0, \forall \zeta \in \rL^{2}(M) \right\} \\
\mathfrak M^{\omega}(M) &= \left\{ (x_j)_j \in \ell^\infty(J, M) \mid  (x_j)_j \, \mathfrak I_{\omega}(M) \subset \mathfrak I_{\omega}(M) \text{ and } \mathfrak I_{\omega}(M) \, (x_j)_j \subset \mathfrak I_{\omega}(M)\right\}.
\end{align*}
Observe that $\mathfrak I_{\omega}(M) \subset \mathfrak M^{\omega}(M)$. The {\em multiplier algebra} $\mathfrak M^{\omega}(M)$ is a $\rC^*$-algebra and $\mathfrak I_{\omega}(M) \subset \mathfrak M^{\omega}(M)$ is a norm closed two-sided ideal. Following \cite[\S 5.1]{Oc85}, we define the {\em ultraproduct von Neumann algebra} by $M^{\omega} = \mathfrak M^{\omega}(M) / \mathfrak I_{\omega}(M)$, which is indeed known to be a von Neumann algebra. Observe that the proof given in \cite[5.1]{Oc85} for the case when $J = \N$ and $\omega \in \beta(\N) \setminus \N$ applies {\em mutatis mutandis}. We denote the image of $(x_j)_j \in \mathfrak M^{\omega}(M)$ by $(x_j)^{\omega} \in M^{\omega}$.

\subsection{Extensions of free Araki--Woods factors}

Let $U : \R \curvearrowright H_\R$ be any strongly continuous orthogonal representation. Denote by $H = H_{\R} \otimes_{\R} \C = H_\R \oplus {\rm i} H_\R$ the complexified Hilbert space, by $I : H \to H : \xi + {\rm i} \eta \mapsto \xi - {\rm i} \eta$ the canonical involution on $H$ and by $A$ the infinitesimal generator of $U : \R \curvearrowright H$, that is, $U_t = A^{{\rm i}t}$ for all $t \in \R$. We have $IAI = A^{-1}$. Observe that $j : H_{\R} \to H :\zeta \mapsto (\frac{2}{A^{-1} + 1})^{1/2}\zeta$ defines an isometric embedding of $H_{\R}$ into $H$. Put $K_{\R} = j(H_{\R})$. It is easy to see that $K_\R \cap {\rm i} K_\R = \{0\}$ and that $K_\R + {\rm i} K_\R$ is dense in $H$. Write $T = I A^{-1/2}$. Then $T$ is a conjugate-linear closed invertible operator on $H$ satisfying $T = T^{-1}$ and $T^*T = A^{-1}$. Such an operator is called an {\it involution} on $H$. Moreover, we have $\dom(T) = \dom(A^{-1/2})$ and $K_\R = \{ \xi \in \dom(T) \mid T \xi = \xi \}$. In what follows, we simply write
$$\forall \xi, \eta \in K_\R, \quad \overline{\xi + {\rm i} \eta} = T(\xi + {\rm i} \eta) = \xi - {\rm i} \eta.$$
We introduce the \emph{full Fock} space of $H$ by
\begin{equation*}
\mathcal{F}(H) =\C\Omega \oplus \bigoplus_{n = 1}^{\infty} H^{\otimes n}.
\end{equation*}
The unit vector $\Omega$ is called the \emph{vacuum} vector. For all $\xi \in H$, define the {\it left creation} operator $\ell(\xi) : \mathcal{F}(H) \to \mathcal{F}(H)$ by
\begin{equation*}
\left\{ 
{\begin{array}{l} \ell(\xi)\Omega = \xi \\ 
\ell(\xi)(\xi_1 \otimes \cdots \otimes \xi_n) = \xi \otimes \xi_1 \otimes \cdots \otimes \xi_n.
\end{array}} \right.
\end{equation*}
We have $\|\ell(\xi)\|_\infty = \|\xi\|$ and $\ell(\xi)$ is an isometry if $\|\xi\| = 1$. For all $\xi \in K_\R$, put $W(\xi) = \ell(\xi) + \ell(\xi)^*$. The crucial result of Voiculescu \cite[Lemma 2.6.3]{VDN92} is that the distribution of the self-adjoint operator $W(\xi)$ with respect to the vector state $\varphi_U = \langle \, \cdot \,\Omega, \Omega\rangle$ is the semicircular law of Wigner supported on the interval $[-\|\xi\|, \|\xi\|]$. 

Following \cite{Sh96}, the \emph{free Araki--Woods} von Neumann algebra associated with $U : \R \curvearrowright H_\R$ is defined by
\begin{equation*}
\Gamma(H_\R, U)\dpr = \left\{W(\xi) \mid \xi \in K_{\R} \right\}\dpr.
\end{equation*}
The vector state $\varphi_U = \langle \, \cdot \,\Omega, \Omega\rangle$ is called the {\it free quasi-free} state and is faithful on $\Gamma(H_\R, U)\dpr$. Let $\xi, \eta \in K_\R$ and write $\zeta = \xi + {\rm i} \eta$. Put
\begin{equation*}
W(\zeta) = W(\xi) +  {\rm i} W(\eta) = \ell(\zeta) + \ell(\overline \zeta)^*.
\end{equation*}

It is easy to see that for all $n \geq 1$ and all $\zeta_1, \dots, \zeta_n \in K_\R + {\rm i} K_\R$, $\zeta_1 \otimes \cdots \otimes \zeta_n \in \Gamma(H_\R, U)\dpr \Omega$. When $\zeta_1, \dots, \zeta_n$ are all nonzero, we will denote by $W(\zeta_1 \otimes \cdots \otimes  \zeta_n) \in \Gamma(H_\R, U)\dpr$ the unique element such that 
\[\zeta_1 \otimes \cdots \otimes \zeta_n = W(\zeta_1 \otimes \cdots \otimes \zeta_n) \Omega.\]
Such an element is called a {\it reduced word}. By \cite[Proposition 2.1 (i)]{HR14} (see also \cite[Proposition 2.4]{Ho12a}), the reduced word $W(\zeta_1 \otimes \cdots \otimes \zeta_n)$ satisfies the {\em Wick formula} given by
$$W(\zeta_1 \otimes \cdots \otimes \zeta_n) = \sum_{k = 0}^n \ell(\zeta_1) \cdots \ell(\zeta_k) \ell(\overline \zeta_{k + 1})^* \cdots \ell(\overline \zeta_n)^*.$$

Since inner products are assumed to be linear in the first variable, we have $\ell(\xi)^*\ell(\eta) = \overline{\langle \xi, \eta\rangle} 1 = \langle \eta, \xi \rangle 1$ for all $\xi, \eta \in H$. In particular, the Wick formula from \cite[Proposition 2.1 (ii)]{HR14} is 
\begin{align*}
& W(\xi_1 \otimes \cdots \otimes \xi_r) W(\eta_1 \otimes \cdots \otimes \eta_s) \\
&= W(\xi_1 \otimes \cdots \otimes \xi_r \otimes \eta_1 \otimes \cdots \otimes \eta_s) + 
      \overline{\langle \overline \xi_r, \eta_1\rangle} \, W(\xi_1 \otimes \cdots \otimes \xi_{r - 1}) W(\eta_2 \otimes \cdots \otimes \eta_s)
\end{align*}
for all $\xi_1, \dots, \xi_r, \eta_1, \dots, \eta_s \in K_\R + {\rm i} K_\R$. We will repeatedly use this fact throughout. We refer to \cite[Section 2]{HR14} for further details.

The modular automorphism  group $\sigma^{\varphi_U}$ of the free quasi-free state $\varphi_U$ is given by $\sigma^{\varphi_U}_{t} = \Ad(\mathcal{F}(U_t))$, where $\mathcal{F}(U_t) = 1_{\C \Omega} \oplus \bigoplus_{n \geq 1} U_t^{\otimes n}$. In particular, it satisfies
\begin{equation*}
\forall n \in \N, \forall \zeta_1, \dots, \zeta_n \in K_\R +{\rm i} K_\R, \forall t \in \R, \quad \sigma_{t}^{\varphi_U}(W(\zeta_1 \otimes \cdots \otimes \zeta_n))  =  W(U_t \zeta_1 \otimes \cdots \otimes U_t \zeta_n). 
\end{equation*}


Let now $G$ be any countable group and $\pi:G\curvearrowright H_\R$ any orthogonal representation such that $U$ and $\pi$ commute (hereafter abbreviated $[U, \pi] = 0$). Denote by $\pi:G\curvearrowright H$  the corresponding unitary representation. Using the Wick formula, for all $n \geq 0$, all $\zeta_1, \dots, \zeta_n \in K_\R + {\rm i} K_\R$ and all $g \in G$, we have 
$$\Ad(\mathcal F(\pi_g))(W(\zeta_1 \otimes \cdots \otimes \zeta_n)) = W(\pi_g(\zeta_1) \otimes \cdots \otimes \pi_g(\zeta_n)).$$
The action $\Ad(\mathcal F(\pi)) : G \curvearrowright \mathbf B(\mathcal F(H))$ leaves the free Araki--Woods von Neumann algebra $\Gamma(H_\R, U)\dpr$ globally invariant. We use the following terminology.
\begin{df}
The action $\sigma^\pi : G \curvearrowright \Gamma(H_\R, U)\dpr$ defined by $\sigma^\pi_g = \Ad(\mathcal F(\pi_g))$ for every $g \in G$ is called the {\em free Bogoljubov} action associated with the orthogonal representation $\pi : G \curvearrowright H_\R$. The action $\sigma^\pi$ preserves the quasi-free state $\varphi_U$ and commute with its modular automorphism group $\sigma^{\varphi_U}$, that is, 
$$\forall g \in G, \forall t \in \R, \quad \varphi_U = \varphi_U \circ \sigma^\pi_g \quad \text{and} \quad \sigma^{\varphi_U}_t \circ \sigma^\pi_g = \sigma^\pi_g \circ \sigma^{\varphi_U}_t.$$
\end{df}
We simply denote by $\Gamma(U, \pi)\dpr = \Gamma(H_\R, U)\dpr \rtimes_{\sigma^\pi} G$ the corresponding crossed product von Neumann algebra. We refer to $\Gamma(U, \pi)\dpr$ as the {\em extension} of the free Araki--Woods von Neumann algebra $\Gamma(H_\R, U)\dpr$ by the countable group $G$ via the free Bogoljubov action $\sigma^\pi : G \curvearrowright \Gamma(H_\R, U)\dpr$. Put $N =  \Gamma(H_\R, U)\dpr$ and $M = \Gamma(U, \pi)\dpr$ so that $M = N \rtimes G$. Denote by $\rE_N : M \to N$ the canonical faithful normal conditional expectation and by $\varphi = \varphi_U \circ \rE_N$ the canonical faithful normal state on $M$. We identify the standard form $\rL^2(M)$ with $\mathcal F(H) \otimes \ell^2(G)$ via the unitary mapping 
$$U : \rL^2(M) \to \mathcal F(H) \otimes \ell^2(G) : W(\zeta_1 \otimes \cdots \otimes \zeta_n) u_g \xi_\varphi \mapsto \zeta_1 \otimes \cdots \otimes \zeta_n \otimes \delta_g$$
where $n \geq 0$, $\zeta_1, \dots, \zeta_n \in K_\R +{\rm i} K_\R$, $g \in G$.

\subsection{Intertwining theory}

Let $M$ be any $\sigma$-finite von Neumann algebra and $A \subset 1_A M 1_A$, $B \subset 1_B M 1_B$ any von Neumann subalgebras with expectation. Following \cite{Po01, Po03, HI15}, we say that $A$ {\em embeds with expectation into} $B$ {\em inside} $M$ and write $A \preceq_M B$, if there exist projections $e \in A$ and $f \in B$, a nonzero partial isometry $v \in eMf$ and a unital normal $\ast$-homomorphism $\theta: eAe \to fBf$ such that the inclusion $\theta(eAe) \subset fBf$ is with expectation and $av = v\theta(a)$ for all $a \in eAe$.

We will need the following technical result that is essentially contained in \cite[Lemma 2.6]{HV12}.

\begin{lem}\label{lem-intertwining}
Let $M$ be any von Neumann algebra with separable predual. Let $A \subset 1_A M 1_A$, $B \subset M$ be any von Neumann  subalgebras with expectation. Assume that $B$ is of type ${\rm I}$. If $A \npreceq_M B$, there exists a diffuse abelian subalgebra with expectation $D \subset A$ such that $D \npreceq_M B$.
\end{lem}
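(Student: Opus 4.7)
I would follow the template of \cite[Lemma 2.6]{HV12}, adapted to the $\sigma$-finite setting via the techniques of \cite{HI15}. The argument has three stages: reduce to $B$ abelian, observe that $A$ is diffuse and contains a diffuse abelian subalgebra with expectation, and then run a maximality argument to construct $D$ with the required non-intertwining.

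\emph{Stage 1 (reduce to $B$ abelian).} Since $B$ is type~I with expectation in $M$, I would find an abelian MASA $B_0 \subset B$ that is itself with expectation in $M$: decompose $B$ into its homogeneous type~I summands $B_n \ovt \mathbf B(H_n)$ and take the diagonal MASA in each $\mathbf B(H_n)$, so that the conditional expectation onto $B_0$ is given by the diagonal and composes with the expectation $M \to B$. For $B$ type~I the equivalence $A \preceq_M B \Longleftrightarrow A \preceq_M B_0$ holds, because any $A$--$B$ intertwining can be compressed by minimal projections of the relative commutant $B \cap B_0'$. So we may assume $B$ is abelian. In this case $A$ must be diffuse: otherwise a minimal projection $e \in A$ would satisfy $eAe = \C e$, and a nonzero partial isometry in $eMf$ for any nonzero projection $f \in B$ (which exists since $B$ has full central support) would directly witness $A \preceq_M B$, contradicting the hypothesis.

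\emph{Stage 2 (diffuse abelian subalgebra with expectation exists).} Fix a faithful normal conditional expectation $E_A : 1_A M 1_A \to A$ and a faithful normal state $\varphi = \psi \circ E_A$ with $\psi$ a faithful normal state on $A$; then $\sigma^\varphi_t(A) = A$, and every $\sigma^\psi$-invariant subalgebra of $A$ is automatically with expectation in $1_A M 1_A$. Since $A$ is diffuse and $\sigma$-finite with separable predual, one produces a diffuse MASA with expectation using standard tools (in the spirit of \cite[\S 2]{HI15}). The same reasoning applies to any nonzero corner $qAq$ of $A$.

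\emph{Stage 3 (maximality).} Suppose for contradiction that every diffuse abelian subalgebra with expectation $D \subset A$ satisfies $D \preceq_M B$. By Zorn, select a maximal family $(p_i)_{i \in I}$ of pairwise orthogonal projections in $A$ with each $p_i A p_i \preceq_M B$. If $q := 1_A - \sum_i p_i \neq 0$, then by Stage~2 the corner $qAq$ contains a diffuse abelian subalgebra with expectation, which by assumption embeds into $B$; extracting a nonzero projection $e \leq q$ in $A$ from this embedding (via the support of the intertwining partial isometry) one gets $eAe \preceq_M B$, contradicting the maximality of $(p_i)_i$. Hence $\sum_i p_i = 1_A$, and assembling the local intertwinings into a global one --- using that $B$ is abelian to select pairwise orthogonal target projections $f_i \in B$ and summing the implementing partial isometries --- yields $A \preceq_M B$, contradicting the hypothesis. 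Thus some diffuse abelian subalgebra with expectation $D \subset A$ must satisfy $D \npreceq_M B$.

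\emph{Main obstacle.} Two steps deserve careful technical handling. The first is the implication ``$D' \preceq_M B$ with $D' \subset qAq$ a diffuse abelian subalgebra implies $eAe \preceq_M B$ for some nonzero $e \leq q$'': this requires using the abelian structure of $B$ to extend the intertwining of $D'$ to the larger ambient corner. The second is the global assembly of local intertwinings $(p_i A p_i) \preceq_M B$ into an intertwining $A \preceq_M B$, which must be performed in the semifinite basic construction $\langle M, e_B \rangle$, using the abelian structure of $B$ to ensure pairwise orthogonality of target projections so that the sum of the partial isometries is itself a partial isometry implementing the desired unital normal $\ast$-homomorphism.
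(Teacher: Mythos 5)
There is a genuine gap, and it sits exactly where the content of the lemma lies. The paper's own ``proof'' is a one--line invocation of \cite[Lemma 2.6]{HV12} (rerun for type ${\rm I}$ targets), so the whole burden is the argument behind that lemma; your Stage~3 replaces it by the implication: \emph{if $D \subset qAq$ is diffuse abelian with expectation and $D \preceq_M B$, then $eAe \preceq_M B$ for some nonzero projection $e \leq q$ in $A$}. No mechanism is given for this beyond ``using the abelian structure of $B$'', and as a statement about a single $D$ it is false: take $M = A = \rL(\F_2)$ and $B = D$ a maximal abelian subalgebra of it; then $D \preceq_M B$ trivially, while no corner $eAe$ (a ${\rm II_1}$ factor) can admit a unital normal $\ast$-homomorphism onto a with-expectation subalgebra of the type ${\rm I}$ algebra $fBf$, and indeed $A \npreceq_M B$. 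Any correct proof must exploit the \emph{full} hypothesis that every diffuse abelian with-expectation subalgebra of $A$ intertwines into $B$ (that is precisely what \cite[Lemma 2.6]{HV12} does, the type ${\rm I}$ assumption entering through the structure of $\langle M, B\rangle$ and the characterization of $\preceq_M$); your sketch only ever uses it for one $D$ at a time. Moreover, the Zorn scaffolding is vacuous: by the very definition of $\preceq_M$, if $pAp \preceq_M B$ for some nonzero projection $p \in A$, then the same witnesses ($e \in pAp \subset A$, $f$, $v$, $\theta$) already show $A \preceq_M B$. Hence under the standing hypothesis $A \npreceq_M B$ your maximal family $(p_i)_i$ is empty, $q = 1_A$, and the final ``assembly of local intertwinings'' never takes place; the entire argument collapses onto the unjustified extension step.

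Two secondary points. In Stage~1, the nontrivial direction of your reduction ($D \preceq_M B \Rightarrow D \preceq_M B_0$) is justified by compressing ``by minimal projections of $B \cap B_0'$'', but $B_0$ is maximal abelian in $B$, so $B \cap B_0' = B_0$, which is typically diffuse; the reduction may be salvageable, but not by that argument (and the paper goes the opposite way: the abelian case is the known one, and the type ${\rm I}$ case is obtained by rerunning its proof). In Stage~2, what you need --- and what is true --- is that a diffuse von Neumann algebra with separable predual contains a diffuse abelian subalgebra with expectation; this does not follow from Takesaki's theorem applied to an arbitrary state $\psi$ (the centralizer $A_\psi$ can be trivial, e.g.\ for free quasi-free states with weakly mixing modular representation), and the claim that one can produce a diffuse \emph{masa} with expectation is both stronger than needed and not standard. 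One has to choose the state or weight according to the type decomposition of $A$ (trace-preserving expectations on the semifinite part, centralizers of suitable weights or the discrete core on the type ${\rm III}$ part) before invoking Takesaki's criterion.
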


\begin{proof}
The proof of \cite[Lemma 2.6]{HV12} applies {\em mutatis mutandis} to the case when $B$ is a type ${\rm I}$ von Neumann algebra.
\end{proof}

\subsection{Relative amenability}

Let $M$ be any $\sigma$-finite von Neumann algebra and $A \subset 1_A M 1_A$, $B \subset M$ any von Neumann subalgebras with expectation. Following \cite{OP07, HI17}, we say that $A$ is {\em amenable relative to $B$ inside $M$} and write $A \lessdot_M B$ if there exists a conditional expectation $\Phi : 1_A \langle M, B \rangle 1_A \to A$ such that the restriction $\Phi |_{1_A M 1_A} : 1_A M 1_A \to A$ is faithful and normal. 

Fix now $\rE_A : 1_A M 1_A \to A$ a faithful normal conditional expectation. Using \cite[Remark 3.3]{Is17}, $A \lessdot_M B$ if and only if there exists a conditional expectation $\Phi : 1_A \langle M, B \rangle 1_A \to A$ such that $\Phi |_{1_A M 1_A} = \rE_A$.

\section{Factoriality and Connes' type classification}

We start by proving the following well known fact about orthogonal representations of abelian locally compact second countable (lcsc) groups.

\begin{prop}\label{prop-irreducible}
Let $G$ be any abelian lcsc group and $\rho : G \curvearrowright H_\R$ any irreducible strongly continuous orthogonal representation. Then $\dim(H_\R) \in \{1, 2\}$.

If $\dim(H_\R) = 1$, there exists a continuous homomorphism $\varepsilon : G \to \{-1, 1\}$ such that for all $g \in G$, we have $\rho_g = \varepsilon_g 1_{H_\R}$.

If $\dim(H_\R) = 2$, there exist a Borel map $\psi : G \to \R$ such that for all $g, h \in G$, we have $\psi(gh) - \psi(g) - \psi(h) \in 2\pi \Z$, and there exists an orthonormal basis of $H_\R$ such that $\rho : G \curvearrowright H_\R$ has the following form:
$$\forall g \in G, \quad \rho_g = \begin{pmatrix}
\cos(\psi(g)) & \sin(\psi(g)) \\
-\sin(\psi(g)) & \cos(\psi(g))
\end{pmatrix}.$$
\end{prop}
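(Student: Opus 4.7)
The plan is to complexify the representation and invoke the spectral theorem for strongly continuous unitary representations of abelian lcsc groups (SNAG). I would set $H_\C = H_\R \otimes_\R \C$, equipped with the anti-unitary complex conjugation $J : H_\C \to H_\C$ fixing $H_\R$ pointwise, and extend $\rho$ complex-linearly to a strongly continuous unitary representation on $H_\C$, so that $J\rho_g = \rho_g J$ for every $g \in G$. SNAG then produces a projection-valued spectral measure $E$ on the Pontryagin dual $\widehat{G}$ with $\rho_g = \int_{\widehat G} \chi(g)\, dE(\chi)$. Combining the intertwining relation with uniqueness of the spectral resolution, one obtains $J E(A) J = E(\iota(A))$ for every Borel set $A \subset \widehat G$, where $\iota : \widehat G \to \widehat G$ sends $\chi$ to $\overline{\chi}$.

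Next I would use irreducibility of $\rho$ on $H_\R$ to collapse the spectral picture. For any $\iota$-invariant Borel set $A \subset \widehat G$, the spectral projection $E(A)$ commutes with $J$, so $E(A)H_\C$ is $J$-stable, and therefore $E(A)H_\C \cap H_\R$ is a closed $G$-invariant real subspace of $H_\R$. This real subspace is nonzero whenever $E(A) \neq 0$, as seen by applying $v \mapsto v + Jv$ or $v \mapsto \i(v - Jv)$ to a nonzero $v \in E(A)H_\C$. Irreducibility then forces $E(A) \in \{0,1\}$, so the spectral measure is concentrated on a single $\iota$-orbit: either a fixed point $\{\chi\}$ with $\chi = \overline{\chi}$, or a two-point orbit $\{\chi, \overline{\chi}\}$ with $\chi \neq \overline{\chi}$.

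A short case analysis finishes the argument. If the orbit is a fixed point, then $\chi$ takes values in $\mathbf{T} \cap \R = \{-1,1\}$ and $\rho_g = \chi(g)\, 1_{H_\C}$ restricts to $\chi(g)\, 1_{H_\R}$; every one-dimensional real subspace of $H_\R$ is then $G$-invariant, so irreducibility gives $\dim H_\R = 1$ with $\varepsilon = \chi$. If the orbit has two points, then $H_\C = H_\chi \oplus H_{\overline{\chi}}$ with $J$ anti-unitarily exchanging the summands, so $\dim_\C H_\chi = \dim_\C H_{\overline{\chi}}$; picking a unit vector $v \in H_\chi$ and setting $e_1 = (v + Jv)/\sqrt{2}$ and $e_2 = -\i(v - Jv)/\sqrt{2}$ produces an orthonormal pair in $H_\R$ spanning a two-dimensional $G$-invariant subspace, and irreducibility forces this to be all of $H_\R$. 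A Borel lift $\psi : G \to \R$ of the continuous character $\chi : G \to \mathbf{T}$ (obtained from a Borel section of the covering $\R \to \mathbf{T}$) then satisfies $\psi(gh) - \psi(g) - \psi(h) \in 2\pi\Z$, and a direct computation in the basis $(e_1, e_2)$ produces the stated rotation matrix.

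The main obstacle I expect is ruling out higher multiplicity within a single $\iota$-orbit: the step $E(A) \in \{0,1\}$ only pins down the support of the spectral measure, not the fiber dimension, and this is addressed in each case by the explicit construction of proper $G$-invariant real subspaces when $\dim_\C H_\chi \geq 2$, contradicting irreducibility. The Borel measurable selection for $\psi$ is standard and presents no genuine difficulty.
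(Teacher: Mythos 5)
Your proposal is correct and follows essentially the same route as the paper: complexify, apply the SNAG theorem, use the conjugation symmetry $J E(B) J = E(\overline{B})$ together with the translation of real irreducibility into ``no proper closed subspace invariant under both $\rho_\C(G)$ and $J$'' to force the spectral measure onto a single conjugation orbit $\{\chi,\overline{\chi}\}$, and then run the same two-case analysis with a Borel lift of the character. The only step worth spelling out is the passage from ``$E(A)\in\{0,1\}$ for every conjugation-invariant Borel set $A$'' to concentration on a single orbit (the paper does this by separating two putative orbits with conjugation-symmetric open neighborhoods); your explicit construction of invariant real subspaces in each case cleanly handles the multiplicity issue that the paper treats more tersely.
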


\begin{proof}
We denote by $\rho_{\C} : G \curvearrowright H_\C$ the complexified strongly continuous  unitary representation. Denote by $J : H_\C \to H_\C$ the canonical conjugation. Observe that $\rho_{\C}(g) J = J \rho_{\C}(g)$ for every $g \in G$. The fact that the orthogonal representation $\rho$ is irreducible translates into the following fact for $\rho_{\C}$: the only closed subspaces of $H_{\C}$ that are invariant under both $\rho_{\C}(G)$ and $J$ are $\{0\}$ and $H_\C$.

Since $G$ is an abelian lcsc group, we may consider the spectral measure $\rE_{\rho_{\C}} : \mathcal B(\widehat G) \to \mathcal P(H_\C)$ where $\mathcal B(\widehat G)$ is the $\sigma$-algebra of Borel subsets of $\widehat G$ and $\mathcal P(H_\C)$ is the lattice of projections of $\mathbf B(H_\C)$. Then we have 
$$\forall g \in G, \quad \rho(g) = \int_{\widehat G} \chi(g) \, {\rm d}\rE_{\rho_{\C}}(\chi).$$
Since $\rho_{\C}(g) = J \rho_{\C}(g) J$ for every $g \in G$, we have $\rE_{\rho_\C}(\overline B) = J\rE_{\rho_\C}(B)J$ for every $B \in \mathcal B(\widehat G)$. 

We claim that there exists $\chi \in \widehat G$ such that $\supp(\rE_{\rho_\C}) = \{\chi, \overline \chi\}$. Indeed, otherwise we can find $\chi_{1}, \chi_{2} \in \supp(\rE_{\rho_\C})$ such that $\{\chi_1, \overline{\chi_1}\} \cap \{\chi_2, \overline{\chi_2}\} = \emptyset$. We can then find open neighborhoods $O_1, O_2 \subset \widehat G$ of $\chi_1, \chi_2$ respectively such that $\left(O_1 \cup \overline{O_1} \right) \cap \left(O_2 \cup \overline{O_2} \right) = \emptyset$. Put $B = O_1 \cup \overline{O_1}$. Then $B = \overline B$ and $\rE_{\rho_{\C}}(B) \neq 0$. Since $O_2 \cup \overline{O_2} \subset \widehat G \setminus B$, we have $\rE_{\rho_{\C}}(\widehat G \setminus B) \neq 0$. It follows that the range $K$ of $\rE_{\rho_{\C}}(B)$ is a subspace of $H_\C$ that is invariant under $\rho_\C(G)$ and $J$ and such that $K \neq \{0\}$ and $K \neq H_{\C}$. This contradicts the irreducibility of the orthogonal representation $\rho$. Thus,  there exists $\chi \in \widehat G$ such that $\supp(\rE_{\rho_\C}) = \{\chi, \overline\chi\}$. 

Firstly, assume that $\overline \chi = \chi$. Since the orthogonal representation $\rho$ is irreducible, $\rE_{\rho_\C}(\{\chi\})$ is necessarily a rank one projection and so $\dim_{\C} (H_\C) = 1$. This implies that $\dim_{\R}(H_\R) = 1$. Then there exists a continuous homomorphism $\varepsilon : G \to \{-1, 1\}$ such that for all $g \in G$, we have $\rho_g = \varepsilon_g 1_{H_\R}$.

Secondly, assume that $\overline \chi \neq \chi$. Since the orthogonal representation $\rho$ is irreducible, $\rE_{\rho_\C}(\{\chi\})$ is necessarily a rank one projection and so $\dim_{\C} (H_\C) = 2$. This implies that $\dim_{\R}(H_\R) = 2$. Consider the one-to-one Borel map $f : \mathbf T \to ]- \pi, \pi]$ such that $\exp({\rm i}f(z)) = z$ for every $z \in \mathbf T$. Define the Borel map $\psi : G \to ]-\pi, \pi]$ by $\psi = f \circ \chi$. Then we have $\psi(gh) - \psi(g) - \psi(h) \in 2\pi \Z$ for all $g, h \in G$. Moreover, there exists an orthonormal basis of $H_\R$ such that $\rho : G \curvearrowright H_\R$ has the following form:
$$\forall g \in G, \quad \rho_g = \begin{pmatrix}
\cos(\psi(g)) & \sin(\psi(g)) \\
-\sin(\psi(g)) & \cos(\psi(g))
\end{pmatrix}.$$
This finishes the proof of Proposition \ref{prop-irreducible}.
\end{proof}

We generalize \cite[Theorem 5.1]{HS09} to the setting of Bogoljubov transformations of free Araki--Woods factors.

\begin{lem}\label{lem-outer}
 Let $U : \R \curvearrowright H_\R$ be any strongly continuous orthogonal representation with $\dim H_\R \geq 1$. Put $(N, \varphi) = (\Gamma(H_\R, U)\dpr, \varphi_U)$. Let $\pi \in \mathcal O(H_\R)$ be any orthogonal transformation such that $[U, \pi] = 0$. 

If $\Ad(\mathcal F(\pi)) \in \Inn(N)$, then $\pi = 1$.
\end{lem}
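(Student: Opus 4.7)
The plan is to suppose $\Ad(\cF(\pi)) = \sigma^\pi = \Ad(u)$ for some $u \in \mathcal U(N)$ and deduce $\pi = 1$. The case $\dim H_\R = 1$ is direct: $\mathcal O(H_\R) = \{\pm 1\}$ is discrete, so $U = 1_{H_\R}$, whence $N = \rL(\Z)$ is abelian with $\Inn(N) = \{\id\}$; since $\sigma^{-1} \neq \id$, we must have $\pi = 1$. Henceforth I assume $\dim H_\R \geq 2$, so $N$ is a factor and it suffices to show $u \in \C 1$.

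Write the Fock decomposition $u\Omega = \sum_{n \geq 0} \Phi_n$ with $\Phi_n \in H^{\ot n}$. Apply the commutation $uW(\zeta) = W(\pi\zeta) u$ to the vacuum $\Omega$ for $\zeta \in K_\R + \i K_\R$. Expanding the right hand side via $W(\pi\zeta) = \ell(\pi\zeta) + \ell(\overline{\pi\zeta})^*$, and the left hand side using the correspondence $u = \sum_n W(\Phi_n)$ combined with the Wick formula $W(\Phi_n)W(\zeta) = W(\Phi_n \ot \zeta) + C(\Phi_n;\zeta)$ recalled in the excerpt (where $C(\Phi_n;\zeta)$ denotes the contraction against $\zeta$ in the last tensor slot of $\Phi_n$), and matching Fock components at each level $k \ge 0$ yields the identity
\[
\Phi_{k-1} \ot \zeta - \pi\zeta \ot \Phi_{k-1} \;=\; \ell(\overline{\pi\zeta})^* \Phi_{k+1} - C(\Phi_{k+1};\zeta),
\]
valid for every $\zeta \in H$ by density.

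Next I argue $\Phi_n = 0$ for every $n \geq 1$ via a top-level reduction. If $N \geq 1$ is the largest index with $\Phi_N \neq 0$, the level-$(N+1)$ relation degenerates to $\Phi_N \ot \zeta = \pi\zeta \ot \Phi_N$ for every $\zeta \in H$. Taking inner product with a simple tensor $\eta_1 \ot \cdots \ot \eta_{N+1}$ and applying the Riesz representation in the $\eta_{N+1}$-variable yields
\[
\langle \Phi_N, \eta_1 \ot \cdots \ot \eta_N\rangle \, \eta_{N+1} \;=\; \langle \Phi_N, \eta_2 \ot \cdots \ot \eta_{N+1}\rangle \, \pi^* \eta_1 \qquad \text{in } H.
\]
Since $\dim H \geq 2$, one can choose $\eta_{N+1}$ linearly independent from $\pi^* \eta_1$, which forces both scalar pairings to vanish; varying the $\eta_i$'s then yields $\Phi_N = 0$, a contradiction.

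Hence $u\Omega = \Phi_0 \cdot \Omega \in \C\Omega$, so $u \in \C 1$ (because $\Omega$ is separating for $N$), and therefore $\sigma^\pi = \id$. Then $W(\pi\zeta) = W(\zeta)$ for every $\zeta \in K_\R$ forces $\pi|_{K_\R} = \id$; since $\pi$ commutes with the bounded invertible map $j : H_\R \to K_\R$, this gives $\pi = 1$ on $H_\R$. The main obstacle in this plan is the infinite-Fock-support case for $u\Omega$, where no largest $N$ exists: the intended approach there is to exploit the $\rL^2$-summability $\sum_n \|\Phi_n\|^2 = 1$ together with an $\rL^2$-truncation and limiting argument to reduce effectively to the top-level identity while controlling the tail contribution of $\Phi_{k+1}$ in the level-$k$ relation above.
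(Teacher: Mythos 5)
Your finite-support argument is fine as far as it goes, but the case you defer at the end is not a technicality: it is the whole problem. For a generic unitary $u \in \mathcal U(N)$ (and $N$ here is a ${\rm II_1}$ or type ${\rm III}$ factor), $u\Omega$ has infinitely many nonzero Fock components, so there is no top level $N$, and your level-$k$ identity merely couples $\Phi_{k-1}$ to $\Phi_{k+1}$ with no decay mechanism. Truncating at some level $K$ and ``controlling the tail'' is exactly where such attempts break down: the truncation produces boundary terms at level $K$ of the same $\rL^2$-size as the quantities you want to kill, and $\ell^2$-summability of $(\|\Phi_n\|)_n$ alone does not let you run the top-level cancellation in the limit. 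It is telling that the paper does not argue this way at all: it splits according to whether the joint representation $\rho:\R\times\Z\curvearrowright H_\R$, $\rho_{(t,n)}=U_t\pi^n$, is reducible or irreducible. In the reducible case it writes $N$ as a free product over the two invariant summands and invokes Ueda's result on unitaries normalizing both free factors to force $u\in\mathbf T 1$; in the irreducible case Proposition \ref{prop-irreducible} reduces to $\dim H_\R\le 2$, where the tracial situation is quoted from \cite[Theorem 5.1]{HS09} (itself a nontrivial theorem, not a short Fock computation) and the type ${\rm III_\lambda}$ situation is settled via the $\rT$-invariant. So the hard analytic content you are missing is precisely what the paper outsources or circumvents structurally.

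There are also unaddressed rigor problems in your derivation of the level-$k$ identity in the genuinely non-tracial case. You write $u=\sum_n W(\Phi_n)$, but $W(\Xi)$ is only defined for $\Xi$ in the algebraic span of tensors with legs in $K_\R+{\rm i}K_\R$; a general component $\Phi_n\in H^{\otimes n}$ of $u\Omega$ need not be left-bounded, nor need its last leg lie in the domain of the involution $T=IA^{-1/2}$, which is unbounded as soon as $U\neq 1$. Consequently the contraction term $C(\Phi_{k+1};\zeta)$, which in the Wick formula is a pairing against $\overline{(\cdot)}$, is not well defined as you state it, and the asserted extension ``for every $\zeta\in H$ by density'' fails because the annihilation/contraction terms do not depend continuously on $\zeta\in H$ (they involve $\overline\zeta$ and $A^{\pm 1/2}\zeta$); one must work with $\zeta$ in the analytic subspace and derive the identity at the Hilbert-space level using left and right creation operators, moving $T$ onto $\zeta$. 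These defects are repairable with care, but together with the unproven infinite-support step the proposal does not constitute a proof of the lemma.
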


\begin{proof}
Assume that $\theta = \Ad(\mathcal F(\pi)) \in \Inn(N)$. Let $u \in \mathcal U(N)$ such that $\theta = \Ad(u)$. We show that $\pi = 1$. Since $[U, \pi] = 0$, we may define the strongly continuous orthogonal representation $\rho : \R \times \Z \curvearrowright H_\R$ by $\rho_{(t, n)} = U_t \pi^n$. There are two cases to consider.

First, assume that $\rho$ is {\em reducible} and write $H_\R = H_\R^1 \oplus H_\R^2$ where $H_\R^i \subset H_\R$ is a nonzero $\rho$-invariant subspace for every $i \in \{1, 2\}$. Write $U_i = U|_{H_\R^i}$ for every $i \in \{1, 2\}$. Then \cite[Theorem 2.11]{Sh96} implies that $(N, \varphi) = (\Gamma(H_\R^1, U_1)\dpr, \varphi_{U_1}) \ast (\Gamma(H_\R^2, U_2)\dpr, \varphi_{U_2})$. For every $i \in \{1, 2\}$, we moreover have
$$u\, \Gamma(H_\R^i, U_i)\dpr u^*= \theta( \Gamma(H_\R^i, U_i)\dpr) = \Gamma(\pi(H_\R^i), U_i)\dpr = \Gamma(H_\R^i, U_i)\dpr.$$
Then \cite[Proposition 3.1]{Ue10} implies that $u \in \Gamma(H_\R^1, U_1)\dpr \cap \Gamma(H_\R^2, U_2)\dpr$ and so $u \in \mathbf T 1$. Thus, $\theta = \id_N$ and so $\pi = 1$.

Secondly, assume that $\rho$ is {\em irreducible}. Since $\R \times \Z$ is an abelian lcsc group, we have $\dim(H_\R) \in \{1, 2\}$ by Proposition \ref{prop-irreducible}. If $\dim(H_\R) = 1$, we have $\theta = \id_N$ and thus $\pi = 1$. If $\dim(H_\R) = 2$, there exist $0 < \lambda \leq 1$ and $\mu > 0$ and an orthonormal basis of $H_\R$ such that $\rho : \R \times \Z \curvearrowright H_\R$ has the following form: 
$$\forall (t, n) \in \R \times \Z, \quad \rho_{(t, n)} = \begin{pmatrix}
\cos(t \log \lambda + n \log\mu) & -\sin(t \log \lambda + n \log\mu) \\
\sin(t \log \lambda + n \log\mu) & \cos(t \log \lambda + n \log\mu)
\end{pmatrix}.$$
If $\lambda = 1$, $U$ is trivial and \cite[Theorem 5.1]{HS09} implies that $\pi = 1$. If $0 < \lambda < 1$, using \cite[Section 4]{Sh96}, we know that $N$ is a type ${\rm III_\lambda}$ factor. Moreover, we have $\pi = \rho_{(0, 1)} = U_t$ where $t = \frac{\log \mu}{ \log \lambda}$. Since $\rT(N) = \frac{2 \pi}{\log \lambda} \Z$ and since $\sigma_t^\varphi = \theta = \Ad(u)$, there exists $k \in \Z$ such that $\frac{\log \mu}{\log \lambda} = t = \frac{2\pi k}{\log \lambda}$. Then $\log \mu = 2 \pi k$ and so $\pi = \rho_{(0, 1)} = 1$.
\end{proof}

\begin{proof}[Proof of Theorem \ref{letterthm-type}]
Put $N = \Gamma(H_\R, U)\dpr$ and $M = \Gamma(U, \pi)\dpr$ so that $M = N \rtimes G$. Denote by $\rE_N : M \to N$ the canonical faithful normal conditional expectation. Denote by $\varphi \in N_\ast$ the free quasi-free state on $N$ and put $\psi = \varphi \circ \rE_N \in M_\ast$. 

$(\rm i)$ We start by proving the following claim.
\begin{claim}\label{claim-1}
$\rL(G)' \cap M = \rL(G)' \cap (N \rtimes \FC(G))$.
\end{claim}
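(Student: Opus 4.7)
The plan is to prove the two inclusions of Claim \ref{claim-1} by analyzing the Fourier expansion of elements of $M = N \rtimes G$. The reverse inclusion $\rL(G)' \cap (N \rtimes \FC(G)) \subset \rL(G)' \cap M$ is immediate from $N \rtimes \FC(G) \subset M$, so the content is in the forward inclusion.

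For the forward inclusion, I would take $x \in \rL(G)' \cap M$ and set $x_g = \rE_N(x u_g^*) \in N$ for every $g \in G$. Using the identification $\rL^2(M) \cong \mathcal F(H) \otimes \ell^2(G)$ recalled in the preliminaries, the vector $x \xi_\psi$ decomposes as $\sum_{g \in G} (x_g \xi_\varphi) \otimes \delta_g$, which yields the $\rL^2$-Fourier expansion $x = \sum_{g \in G} x_g u_g$ together with the Parseval-type identity $\sum_{g \in G} \|x_g\|_\varphi^2 = \|x\|_\psi^2 < +\infty$.

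Next I would translate the assumption $u_h x u_h^* = x$ (for every $h \in G$) into a condition on the Fourier coefficients. Using $u_h x_g u_h^* = \sigma^\pi_h(x_g)$ and $u_h u_g u_h^* = u_{hgh^{-1}}$, the commutation relation becomes
\[
\sigma^\pi_h(x_{h^{-1}gh}) = x_g, \qquad \forall g, h \in G.
\]
Since the free Bogoljubov action $\sigma^\pi$ preserves the free quasi-free state $\varphi_U$, applying $\|\cdot\|_\varphi$ shows that the function $g \mapsto \|x_g\|_\varphi$ is constant on each conjugacy class $C(g)$. Combined with the $\ell^2$-summability of $(\|x_g\|_\varphi)_{g \in G}$, this forces $x_g = 0$ for every $g \in G$ with $|C(g)| = +\infty$, i.e., for every $g \notin \FC(G)$.

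Finally, I would invoke the standard fact that an element of a crossed product $N \rtimes G$ belongs to the subalgebra $N \rtimes H$ associated to a subgroup $H \leq G$ if and only if all its Fourier coefficients vanish outside $H$. Applied to $H = \FC(G)$, this yields $x \in N \rtimes \FC(G)$, hence $x \in \rL(G)' \cap (N \rtimes \FC(G))$, completing the proof. No serious obstacle is expected here: the argument is a standard Fourier-coefficient computation, and the only place where the structure of $N$ intervenes is through the fact that $\sigma^\pi$ preserves $\varphi_U$.
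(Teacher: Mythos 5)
Your argument is correct and coincides with the paper's own proof: the same Fourier decomposition $x = \sum_g x_g u_g$, the same relation $\sigma^\pi_h(x_{h^{-1}gh}) = x_g$, and the same use of $\varphi$-invariance plus $\ell^2$-summability to kill the coefficients outside $\FC(G)$. The only cosmetic difference is at the end, where the paper justifies the ``standard fact'' you invoke by noting that $N \rtimes \FC(G)$ is $\sigma^\psi$-invariant and applying the $\psi$-preserving conditional expectation $\rF : M \to N \rtimes \FC(G)$, which is exactly the rigorous form of your last step.
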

Indeed, let $x \in \rL(G)' \cap M$ and write $x = \sum_h x^h u_h$ for its Fourier decomposition where $x^h = \rE_N(xu_h^*)$ for every $h \in G$. Then we have $\sigma_g^\pi(x^{g^{-1}hg}) = x^h$ for all $g, h \in G$. Since $\sigma^\pi : G \curvearrowright N$ is $\varphi$-preserving and since $\sum_h \|x^h\|_\varphi^2 = \|x\|_\psi^2 < +\infty$, it follows that $x^h = 0$ for every $h \in G\setminus \FC(G)$. Since $N \rtimes \FC(G) \subset M$ is $\sigma^\psi$-invariant, we may denote by $\rF : M \to N \rtimes \FC(G)$ the unique $\psi$-preserving conditional expectation. Then $x = \rF(x) \in N \rtimes \FC(G)$ and the claim is proven.

Assume that $\pi_g \neq 1$ for every $g \in \FC(G) \setminus \{e\}$. Since $\mathcal Z(M) \subset \rL(G)' \cap M$, we have $\mathcal Z(M) = M' \cap (N \rtimes \FC(G))$ by Claim \ref{claim-1}. By assumption and using Lemma \ref{lem-outer}, the action $\sigma^\pi : \FC(G) \curvearrowright N$ is outer. It follows that $N' \cap (N \rtimes \FC(G)) = \C1$ and so $\mathcal Z(M) = \C 1$.

Assume that $\pi_g = 1$ for some $g \in \FC(G) \setminus \{e\}$. Then $\pi_k = 1$ for every $k \in C(g)$. This implies that $x = \sum_{k \in C(g)} u_k \in \mathcal Z(M)$ and so $\mathcal Z(M) \neq \C 1$.

$(\rm ii)$ We compute Connes' invariant $\rT(M)$. Let $t \in \R$ for which there exists $g \in \mathcal Z(G)$ such that $U_t = \pi_g$. Then $\sigma_t^\varphi = \sigma^\pi_g$. By construction of the crossed product von Neumann algebra $M = N \rtimes G$, since $\rL(G) \subset M_\psi$ and since $g \in \mathcal Z(G)$, we have $\sigma_t^{\psi} = \Ad(u_g)$ and so $t \in \rT(M)$. 

Conversely, let $t \in \rT(M)$. Then there exists $u \in \mathcal U(M)$ such that $\sigma_t^\psi = \Ad(u)$. Since $\rL(G) \subset M_\psi$, we have $u \in \rL(G)' \cap M$ and so $u \in N \rtimes \FC(G)$ by Claim \ref{claim-1}. As $\sigma_t^\psi$ leaves $N$ globally invariant, we have $u \in \mathcal N_{N \rtimes \FC(G)}(N)$. By assumption and using Lemma \ref{lem-outer}, the action $\sigma^\pi : \FC(G) \curvearrowright N$ is outer. Then there exist $v \in \mathcal U(N)$ and $g \in \FC(G)$ such that $u = v u_g$ (see e.g.\ \cite[Corollary 3.11]{BB16}). Thus, we have $\sigma_t^\varphi = \Ad(v) \circ \sigma^\pi_g$ and so $\sigma_t^\varphi \circ \sigma^\pi_{g^{-1}} \in \Inn(N)$. Since $\sigma_t^\varphi \circ \sigma^\pi_{g^{-1}} = \Ad(\mathcal F(U_t \pi_g^*))$, Lemma \ref{lem-outer} implies that $U_t = \pi_g$. Since $[U, \pi] = 0$, we have $\pi_g \in \mathcal Z(\pi(G))$. Since $M$ is a factor, $\pi|_{\FC(G)}$ is faithful by item $(\rm i)$. Altogether, this implies that $g \in \mathcal Z(G)$.

$(\rm iii)$ We now prove that Connes' invariant $\rT(M)$ completely determines the type of $M$. Denote by $\core(N)$ (resp.\ $\core(M)$) the continuous core of $N$ (resp.\ $M$). We canonically have $\core(M) = \core(N) \rtimes G$ where the action $\core(\sigma^\pi) : G \curvearrowright \core(N)$ is given by $\core(\sigma^\pi)_g(\pi_\varphi(x) \lambda_\varphi(t)) = \pi_\varphi(\sigma^\pi_g(x))\lambda_\varphi(t)$ for every $g \in G$, every $t \in \R$ and every $x \in N$. We prove the following claim.

\begin{claim}\label{claim-2}
$\rL(G)' \cap \core(M) = \rL(G)' \cap (\core(N) \rtimes \FC(G))$.
\end{claim}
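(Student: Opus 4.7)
The plan is to run the same Fourier decomposition argument as in the proof of Claim 1, only now in the semifinite setting $\core(M) = \core(N) \rtimes G$. The immediate obstruction is that a given $x \in \rL(G)' \cap \core(M)$ is not a priori an $L^{2}$-vector for the canonical trace on $\core(M)$, so one cannot directly use the $\ell^{2}$-summability of Fourier coefficients. The plan is to bypass this by compressing $x$ with an increasing net of $G$-invariant projections of finite trace that lie in $\rL_{\varphi}(\R)$, perform the Fourier analysis in each finite corner, and then pass to the strong limit.

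Concretely, I would first record three facts that make the localization work. (a) From the defining formula $\core(\sigma^\pi)_g(\pi_\varphi(x)\lambda_\varphi(t)) = \pi_\varphi(\sigma^\pi_g(x))\lambda_\varphi(t)$, the action $\core(\sigma^\pi) : G \curvearrowright \core(N)$ fixes $\rL_\varphi(\R)$ pointwise; combined with $\varphi \circ \sigma^\pi_g = \varphi$, this shows that $\Tr_\varphi$ is $G$-invariant, hence $\Tr := \Tr_\varphi \circ \rE_{\core(N)}$ is a faithful normal semifinite trace on $\core(M)$. (b) Since $\sigma^\pi$ preserves $\varphi$, we have $\rL(G) \subset M_\psi$; therefore $\rL(G)$ commutes with $\lambda_\varphi(\R)$ inside $\core(M)$, hence with $\rL_\varphi(\R)$. (c) Because $\Tr_\varphi|_{\rL_\varphi(\R)}$ is semifinite, I can choose an increasing sequence of projections $q_n \in \rL_\varphi(\R)$ with $\Tr_\varphi(q_n) < +\infty$ and $q_n \to 1$ strongly; each $q_n$ is $G$-fixed and commutes with $\rL(G)$.

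With these in hand, for $x \in \rL(G)' \cap \core(M)$, the compression $q_n x q_n$ lies in the corner $q_n \core(M) q_n$, which, because $q_n$ is $G$-invariant, identifies canonically with the crossed product $(q_n \core(N) q_n) \rtimes G$ for the restricted (trace-preserving) action of $G$; the restriction of $\Tr$ to this corner is a \emph{finite} trace of total mass $\Tr_\varphi(q_n)$. Moreover $q_n x q_n$ still commutes with $\rL(G) q_n$. Now the proof of Claim 1 can be copied verbatim: writing the Fourier decomposition $q_n x q_n = \sum_h y_n^h u_h$ with $y_n^h = q_n \rE_{\core(N)}(x u_h^*) q_n \in q_n \core(N) q_n$, we have $\sum_h \|y_n^h\|_{\Tr}^2 = \|q_n x q_n\|_{\Tr}^2 \leq \|x\|_\infty^2 \, \Tr_\varphi(q_n) < +\infty$, and the commutation with $\rL(G)$ gives $\core(\sigma^\pi)_g(y_n^{g^{-1}hg}) = y_n^h$ for all $g,h \in G$. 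By $G$-invariance of $\Tr$ the map $h \mapsto \|y_n^h\|_{\Tr}$ is constant on each conjugacy class, and summability forces $y_n^h = 0$ for every $h \in G \setminus \FC(G)$. Hence $q_n x q_n \in (q_n \core(N) q_n) \rtimes \FC(G) \subset \core(N) \rtimes \FC(G)$. Since $q_n x q_n \to x$ strongly and $\core(N) \rtimes \FC(G)$ is SOT-closed, we conclude $x \in \core(N) \rtimes \FC(G)$; the reverse inclusion is obvious.

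The one step I would verify with some care is the identification $q_n \core(M) q_n = (q_n \core(N) q_n) \rtimes G$ with compatible trace, but this is a routine consequence of $q_n$ being a $G$-fixed projection in $\core(N)$ commuting with the implementing unitaries $u_g$. Beyond that, everything is a bookkeeping adaptation of the Claim 1 argument, and I do not anticipate any serious obstacle.
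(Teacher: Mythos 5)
Your proposal is correct and follows essentially the same route as the paper: compress by an increasing sequence of finite-trace projections in $\rL_\varphi(\R)$ (which are $G$-fixed and commute with $\rL(G)$), identify the corner $q_n\core(M)q_n$ with the tracial crossed product $(q_n\core(N)q_n)\rtimes G$, rerun the Fourier/conjugacy-class argument of Claim 1 there, and pass to the strong limit as $q_n \to 1$. No gaps.
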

Indeed, let $p \in \rL_\varphi(\R) \subset \core(N)$ be any nonzero finite trace projection. Since $p \in \rL(G)' \cap \core(M)$, it follows that the restriction $\core(\sigma^\pi) : G \curvearrowright p \, \core(N) \, p$ is a trace preserving action on a tracial von Neumann algebra and $p \,\core(M) \, p = p \,\core(N) \,p \rtimes G$. The same proof as in Claim \ref{claim-1} shows that 
\begin{align*}
p (\rL(G)' \cap \core(M) ) p &= \rL(G)' \cap p\,\core(M)\,p \\
&= \rL(G)' \cap (p\,\core(N)\,p \rtimes \FC(G)) \\
&= \rL(G)' \cap p(\core(N) \rtimes \FC(G))p \\
&= p (\rL(G)' \cap (\core(N) \rtimes \FC(G))) p.
\end{align*} 
Since we can find an increasing sequence of nonzero finite trace projections $p_k \in \rL_\varphi(\R)$ such that $p_k \to 1$ strongly, the claim is proven.

If $M$ is of type ${\rm III_1}$, then $\rT(M) = \{0\}$ by \cite[Th\'eor\`eme 3.4.1]{Co72}. Conversely, assume that $\rT(M) = \{0\}$. Recall that by \cite{Sh96}, we have $\rT(N) = \{t \in \R \mid U_t = 1\}$ and Connes' invariant $\rT(N)$ completely determines the type of $N$. Since $\rT(N) \subset \rT(M)$, we also have $\rT(N) = \{0\}$. This implies that $N$ is of type ${\rm III_1}$. By combining \cite[Proposition 5.4]{HS88}, Lemma \ref{lem-outer} and item $(\rm ii)$, we have that $\core(\sigma^\pi) : \FC(G) \curvearrowright \core(N)$ is an outer action. This implies that $\core(N)' \cap (\core(N) \rtimes \FC(G)) = \C 1$. By Claim \ref{claim-2}, we have $\mathcal Z(\core(M)) = \core(M)' \cap (\core(N) \rtimes \FC(G))$ and so $\mathcal Z(\core(M)) = \C1$. This implies that $\core(M)$ is a factor and so $M$ is of type ${\rm III_1}$.

Let $0 < \lambda < 1$ and put $T = \frac{2 \pi}{\log \lambda}$. If $M$ is of type ${\rm III_\lambda}$, then $\rT(M) = T \Z$ by \cite[Th\'eor\`eme 3.4.1]{Co72}. Conversely, assume that $\rT(M) = T \Z$. Since $\rT(N) \subset \rT(M)$, we have $\rT(N) = \kappa T \Z$ for some $\kappa \in \N$. Since $M$ is a factor, $\pi |_{\FC(G)}$ is faithful and we may denote by $g \in \mathcal Z(G)$ the unique element such that $U_T = \pi_g$. Moreover, the map $\varepsilon : \rT(M) \to \mathcal Z(G) : kT \mapsto g^k$ is a well-defined group homomorphism. 

Firstly, assume that $\kappa = 0$, so that $N$ is of type ${\rm III_1}$ (see \cite{Sh96}). Since $\core(N)$ is a factor and since $\core(\sigma^\pi)_h$ is outer for every $h \in \FC(G) \setminus g^\Z$ (by combining \cite[Proposition 5.4]{HS88}, Lemma \ref{lem-outer} and item $(\rm ii)$), it follows that 
$$\core(N)' \cap ((\core(N) \rtimes \FC(G)) = \core(N)' \cap (\core(N) \rtimes g^\Z).$$ 
Using the above observation with Claim \ref{claim-2}, we infer that 
\begin{align*}
\mathcal Z(\core(M)) &= \rL(G)' \cap \left( \core(N)' \cap (\core(N) \rtimes \FC(G)) \right) \\
&= \rL(G)' \cap \left( \core(N)' \cap (\core(N) \rtimes g^\Z) \right) \\
&= \rL(G)' \cap \rL(g^\Z) \quad (\text{since } \core(N) \text{ is a factor})\\
&= \rL(g^\Z) = \rL^\infty(\R/T\Z).
\end{align*}
Since the action $\R \curvearrowright \R/T\Z$ is essentially transitive, the dual action $\R \curvearrowright \mathcal Z(\core(M))$ is essentially transitive and so $M$ is of type ${\rm III_\lambda}$.

Secondly, assume that $\kappa \geq 1$. We have $1 = U_{\kappa T} = \pi_{g^\kappa}$. Since $\pi|_{\FC(G)}$ is faithful, we have $g^\kappa = 1$ and so have $g^\Z \cong \Z/\kappa\Z$. Since $\rT(N) = \kappa T  \Z= \frac{2 \pi}{\log \lambda^{1/\kappa}} \Z$, $N$ is of type ${\rm III_{\lambda^{1/\kappa}}}$ (see \cite{Sh96}). Then $\varphi$ is $\kappa T$-periodic and we have $(N_\varphi)' \cap N = \C1$ by \cite[Th\'eor\`eme 4.2.6]{Co72}. Observe that $M_\psi = N_\varphi \rtimes G$. We next prove the following claim.

\begin{claim}\label{claim-3}
$\mathcal Z(M_\psi) = \rL(g^\Z)$.
\end{claim}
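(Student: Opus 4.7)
The inclusion $\rL(g^\Z) \subset \mathcal Z(M_\psi)$ is straightforward: since $g \in \mathcal Z(G)$, $u_g$ commutes with $\rL(G)$; and since $\pi_g = U_T$, the Bogoljubov automorphism $\sigma_g^\pi = \sigma_T^{\varphi}$ restricts to the identity on the centralizer $N_\varphi$, so $u_g$ commutes with $N_\varphi$ and hence with $M_\psi = N_\varphi \rtimes G$. For the reverse inclusion, I would take $x \in \mathcal Z(M_\psi)$ and expand $x = \sum_h x^h u_h$ as a Fourier series in $M_\psi = N_\varphi \rtimes G$, so that $x^h \in N_\varphi$. The commutation of $x$ with $\rL(G)$ yields the covariance $\sigma_k^\pi(x^{k^{-1}hk}) = x^h$, and the same square-summability argument as in Claim \ref{claim-1} restricts the support of $(x^h)$ to $\FC(G)$.

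Commuting $x$ with $N_\varphi$ next produces the intertwining relation $y \, x^h = x^h \, \sigma_h^\pi(y)$ for all $y \in N_\varphi$ and all $h \in \FC(G)$. This forces both $x^h (x^h)^*$ and $(x^h)^* x^h$ into $\mathcal Z(N_\varphi) \subset (N_\varphi)' \cap N = \C 1$, so whenever $x^h \neq 0$, one obtains $x^h = \alpha_h v_h$ for a scalar $\alpha_h \neq 0$ and a unitary $v_h \in N_\varphi$ with $\sigma_h^\pi|_{N_\varphi} = \Ad(v_h^*)|_{N_\varphi}$. The whole proof then reduces to showing that any $h \in \FC(G)$ whose Bogoljubov action restricts to an inner automorphism of $N_\varphi$ must lie in $g^\Z$.

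To establish this, I would consider $\alpha := \sigma_h^\pi \circ \Ad(v_h) \in \Aut(N, \varphi)$, which fixes $N_\varphi$ pointwise and commutes with $\sigma^{\varphi}$. Using the almost periodic structure of $(N, \varphi)$, namely that $\varphi$ is $\kappa T$-periodic with modular spectrum contained in $\mu^\Z$ where $\mu = \lambda^{1/\kappa}$, $\alpha$ preserves every spectral subspace $N_{\mu^n} = \{x \in N : \sigma_t^{\varphi}(x) = \mu^{\i n t} x\}$ and acts there as an $N_\varphi$-bimodule endomorphism. Since $(N_\varphi)' \cap N = \C 1$, each $N_{\mu^n}$ is irreducible as an $N_\varphi$-bimodule, so $\alpha|_{N_{\mu^n}} = c_n \id$ for some scalar $c_n \in \mathbf T$; multiplicativity of $\alpha$ then forces $c_n = c^n$ for some $c \in \mathbf T$, and writing $c = \mu^{\i t_0}$ yields $\alpha = \sigma_{t_0}^{\varphi}$. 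Hence $\sigma_h^\pi \circ \sigma_{-t_0}^{\varphi} = \Ad(v_h^*) \in \Inn(N)$; since this coincides with $\Ad(\mathcal F(\pi_h U_{-t_0}))$, Lemma \ref{lem-outer} forces $\pi_h = U_{t_0}$. Arguing exactly as in item $(\rm ii)$ of Theorem \ref{letterthm-type}, the commutation $[U, \pi] = 0$ and the faithfulness of $\pi|_{\FC(G)}$ imply $h \in \mathcal Z(G)$; Theorem \ref{letterthm-type}$(\rm ii)$ then gives $t_0 \in \rT(M) = T \Z$, so writing $t_0 = mT$ we obtain $\pi_h = \pi_{g^m}$ and finally $h = g^m \in g^\Z$ by faithfulness of $\pi|_{\FC(G)}$.

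The delicate step is the rigidity assertion in the third paragraph: that any $\varphi$-preserving automorphism of $N$ fixing $N_\varphi$ pointwise and commuting with $\sigma^{\varphi}$ must be modular. The irreducibility of the spectral bimodules $N_{\mu^n}$ relies crucially on the free Fock-space structure of $N$ together with the hypothesis $(N_\varphi)' \cap N = \C 1$, and this is where the bulk of the technical work lies.
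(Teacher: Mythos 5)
Your overall architecture is the same as the paper's: reduce via Claim \ref{claim-1} and the Fourier decomposition to the intertwining relation $y\,x^h = x^h\,\sigma^\pi_h(y)$ for $y \in N_\varphi$, use $(N_\varphi)' \cap N = \C 1$ to write $x^h = \alpha_h v_h$ with $v_h \in \mathcal U(N_\varphi)$ implementing $\sigma^\pi_h|_{N_\varphi}$, and then reduce everything to the rigidity statement that a $\varphi$-preserving automorphism of $N$ fixing $N_\varphi$ pointwise must be modular, after which Lemma \ref{lem-outer} and item $(\rm ii)$ give $h \in g^\Z$ exactly as you say. (One small step you gloss over: knowing the Fourier support lies in $g^\Z$ is not yet $x \in \rL(g^\Z)$; you also need each $x^{g^m}$ to be scalar, which follows from your intertwining relation since $\sigma^\pi_{g^m}|_{N_\varphi} = \sigma^\varphi_{mT}|_{N_\varphi} = \id$ and $N_\varphi$ is a factor.)

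The genuine gap is precisely the rigidity step you defer to the last paragraph. This is the point where the paper does not argue directly but invokes the proof of \cite[Theorem 3.2]{HS88} applied to the periodic weight $\varphi \otimes \Tr_{\mathbf B(\ell^2)}$, yielding $\Ad(v_h^*)\circ\sigma^\pi_h = \sigma^\varphi_t$ for some $t \in \R$. Your proposed substitute---irreducibility of each spectral subspace $N_{\mu^n}$ as an $N_\varphi$-bimodule, Schur's lemma, and multiplicativity of the scalars $c_n$---is a correct strategy in outline, but the irreducibility is exactly the nontrivial content and you neither prove it nor cite a result covering it; moreover it has nothing to do with the free Fock-space structure of $N$, contrary to your closing remark. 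What it actually requires is the structure theory of a periodic state with $(N_\varphi)' \cap N = \C 1$ (Connes, \cite[Th\'eor\`eme 4.2.6]{Co72}): $N_\varphi$ is then a ${\rm II_1}$ factor and the spectral subspace $N_\mu$ contains an isometric eigenvector $v$ (a comparison/maximality argument in $N_\varphi$), whence $N_{\mu^n} = N_\varphi v^n$ for $n \geq 0$; the bimodule $\overline{N_\varphi v^n}^{\,\|\cdot\|_\varphi}$ is then ${}_{N_\varphi}\rL^2(N_\varphi e_n)_{e_n N_\varphi e_n}$ with $e_n = v^n v^{*n}$, whose endomorphism algebra is $\mathcal Z(e_n N_\varphi e_n) = \C$, giving the irreducibility. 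Until you either carry out this analysis or simply quote \cite[Theorem 3.2]{HS88} as the paper does, the proof is incomplete at its crucial point.
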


Indeed, by Claim \ref{claim-1}, we have 
$$\mathcal Z(M_\psi) = (N_\varphi)' \cap \rL(G)' \cap (N_\varphi \rtimes G) = (N_\varphi)' \cap \rL(G)' \cap (N_\varphi \rtimes \FC(G)).$$
 Let $x \in (N_\varphi)' \cap (N_\varphi \rtimes \FC(G))$ and write $x = \sum_h x^h u_h$ for its Fourier decomposition. Then we have $y x^h = x^h \sigma^\pi_h(y)$ for every $h \in \FC(G)$ and every $y \in N_\varphi$. Since $(N_\varphi)' \cap N = \C 1$, for every $h \in \FC(G)$, there exist $\alpha_h \in \C$ and $v_h \in \mathcal U(N_\varphi)$ so that $x^h = \alpha_h v_h$. Let $h \in \FC(G)$ such that $\alpha_h \neq 0$. Then $\sigma^\pi_h |_{N_\varphi} = \Ad(v_h)|_{N_\varphi}$. Then $\Ad(v_h^*) \circ \sigma^\pi_h$ is $\varphi$-preserving and $\Ad(v_h^*) \circ \sigma^\pi_h|_{N_\varphi} = \id_{N_\varphi}$. Applying the proof of \cite[Theorem 3.2]{HS88} to the periodic weight $\varphi \otimes \Tr_{\mathbf B(\ell^2)}$, there exists $t \in \R$ such that $\Ad(v_h^*) \circ \sigma^\pi_h = \sigma_t^\varphi$. Lemma \ref{lem-outer} implies that $U_t = \pi_h$ and so $t \in \rT(M)$ and $h \in g^\Z$. This shows that $(N_\varphi)' \cap (N_\varphi \rtimes \FC(G)) = (N_\varphi)' \cap (N_\varphi \rtimes g^\Z) = \rL(g^\Z)$ (since $N_\varphi$ is a factor) and so $\mathcal Z(M_\psi) = \rL(g^\Z)$. 

Since $\mathcal Z(M_\psi)$ is discrete, $M$ cannot be of type ${\rm III_0}$ (see \cite[Corollaire 3.2.7(b)]{Co72}) and so $M$ is of type ${\rm III_\lambda}$ by \cite[Th\'eor\`eme 3.4.1]{Co72}. 

If $M$ is of type ${\rm II_1}$, then $\rT(M) = \R$. Conversely, if $\rT(M) = \R$, since $M$ has separable predual, it follows that $M$ is semifinite by \cite[Th\'eor\`eme 1.3.4 (b)]{Co72}. Since $N \subset M$ is with expectation, this further implies that $N$ is semifinite (see \cite[Lemma ${\rm V}$.2.29]{Ta02}) and so $N$ is a type ${\rm II_1}$ factor by \cite{Sh96}. Since the free Bogoljubov action $G \curvearrowright (N, \tau)$ is trace-preserving, $M = N \rtimes G$ is a type ${\rm II_1}$ factor. 

Using the above reasoning, $M$ is of type ${\rm III_0}$ if and only if $\rT(M)$ is dense in $\R$ and $\rT(M) \neq \R$. Therefore, Connes' invariant $\rT(M)$ completely determines the type of $M$.

$(\rm iv)$ Finally, assume that $M$ is a type ${\rm III_1}$ factor. For bicentralizer algebras, we use notation of \cite[Section 3]{HI15}. To prove that $M$ has trivial bicentralizer, it suffices to show that $\rB(M, \psi) = \C 1$. Using item $({\rm ii})$, $Q = N \rtimes \FC(G)$ is a type ${\rm III_1}$ factor. Choose a nonprincipal ultrafilter $\omega \in \beta(\N)\setminus \N$. Using \cite[Proposition 3.2]{HI15} and Claim \ref{claim-1}, we have 
$$\rB(M, \psi) = ((M^\omega)_{\psi^\omega})' \cap M \subset ((Q^\omega)_{\psi^\omega})' \cap \rL(G)' \cap M  \subset ((Q^\omega)_{\psi^\omega})' \cap Q =  \rB(Q, \psi).$$
In order to prove that $\rB(M, \psi) = \C 1$, it suffices to show that $\rB(Q, \psi) = \C 1$. 

Put $H = \FC(G)$. Fix an enumeration $\{h_n \mid n \in \N\}$ of $H$. For every $n \in \N$, denote by $H_n$ the subgroup of $H$ generated by $\{h_0, \dots, h_n\}$.  Then $(H_n)_n$ is an increasing sequence of subgroups of $H$ such that $\bigcup_{n \in \N} H_n = H$. For every $n \in \N$, since $H_n$ is finitely generated and since $\FC(H_n) = H_n$, its center $\mathcal Z(H_n)$ has finite index in $H_n$ and so $H_n$ is virtually abelian. Using item $(\rm ii)$, $Q_n = N \rtimes H_n$ is a type ${\rm III_1}$ factor. Theorem \ref{thm-prime} implies that $Q_n$ is semisolid and \cite[Theorem 3.7]{HI15} implies that $Q_n$ has trivial bicentralizer. Thus, we have $\rB(Q_n, \psi) = \C 1$ for every $n \in \N$. 

For every $n \in \N$, denote by $\rE_{Q_n} : Q \to Q_n$ the unique $\psi$-preserving conditional expectation. Let $x \in \rB(Q, \psi)$. Then $\rE_{Q_n}(x) \in \rB(Q_n, \psi)$ and so $\rE_{Q_n}(x) = \psi(x)1$. Since $\lim_n \|x - \rE_{Q_n}(x)\|_\psi = 0$, it follows that $x = \psi(x) 1$. Thus, we have $\rB(Q, \psi) = \C 1$ and so $\rB(M, \psi) = \C1$. This finishes the proof of Theorem \ref{letterthm-type}.
\end{proof}

\section{Fullness}

We fix the following notation. Let $U : \R \curvearrowright H_\R$ be any strongly continuous orthogonal representation with $\dim H_\R \geq 2$. Let $G$ be any countable group and $\pi : G \curvearrowright H_\R$ any faithful orthogonal representation such that $[U, \pi] = 0$. Put $M = \Gamma(U, \pi)\dpr$ and denote by $(M, \rL^2(M), J, \rL^2(M)_+)$ its standard form. Write $\varphi \in M_\ast$ for the canonical faithful state on $M$. 

Write $H_\R = H_\R^{\ap} \oplus H_\R^{\wm}$ where $H_\R^{\ap}$ (resp.\ $H_\R^{\wm}$) is the {\em almost periodic} (resp.\ {\em weakly mixing}) part of the orthogonal representation $\pi : G \curvearrowright H_\R$. Likewise, write $H = H^{\ap} \oplus H^{\wm}$ where $H^{\ap}$ (resp.\ $H^{\wm}$) is the almost periodic (resp.\ weakly mixing) part of the corresponding unitary representation $\pi : G \curvearrowright H$. As usual, denote by $A$ the infinitesimal generator of $U : \R \curvearrowright H$ and let $j : H_\R \to H : \eta \mapsto ( \frac{2}{ A^{-1} + 1})^{1/2} \eta$ be the corresponding isometric embedding. Put $K_\R = j(H_\R)$, $K_\R^{\ap} = j(H_\R^{\ap})$ and $K_\R^{\wm} = j(H_\R^{\wm})$. Recall that $K_\R \cap {\rm i} K_\R = \{0\}$ and $K_\R + {\rm i} K_\R$ is dense in $H$. We consider the involution $K_\R + {\rm i} K_\R \to K_\R + {\rm i} K_\R$ given $\overline{\xi + {\rm i} \eta} = \xi - {\rm i} \eta$ for all $\xi, \eta \in K_\R$.

Let  $L \subset K_\R + {\rm i} K_\R$ be any subspace such that $\overline L=L$. Denote by $\mathcal{X}(L)$ the closure in $\rL^2(M)$ of the linear span of all the elements of the form $e_1\otimes \cdots\otimes e_k \otimes \delta_h$ where $k \geq 1$, $e_1\in L$, $e_2, \dots, e_k \in K_\R + {\rm i} K_\R$, $h \in G$.

\begin{lem}\label{lem-vanishing}
 Assume that $H_\R^{\wm} \neq 0$. Let $L \subset K_\R^{\wm} + {\rm i} K_\R^{\wm}$ be any nonzero finite dimensional subspace. Then for every $x = (x_n)^\omega\in \rL(G)' \cap M^\omega$, we have $\lim_{n \to \omega}\|P_{\cX(L)}(x_n\xi_\varphi)\|=0$.
\end{lem}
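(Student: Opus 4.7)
The plan is to combine the weak mixing of $\pi : G \curvearrowright H^\wm$ with the $\rL(G)$-centrality of $(x_n)^\omega$. Under the canonical identification $\rL^2(M) \cong \mathcal{F}(H) \otimes \ell^2(G)$, the subspace $\cX(L)$ is precisely the part on which the first tensor factor is constrained to lie in $L$; in particular, for any two subspaces $V, W \subset K_\R + \mathrm{i} K_\R$ one has
\[
\|P_{\cX(V)} P_{\cX(W)}\|_{\mathbf{B}(\rL^2(M))} = \|P_V P_W\|_{\mathbf{B}(H)}.
\]
Using $\sigma^\pi_g = \Ad(\mathcal{F}(\pi_g))$, $u_g \in M_\varphi$, and the fact that $\pi_g$ preserves both $K_\R + \mathrm{i} K_\R$ and the involution $\overline{\,\cdot\,}$ (since $[U,\pi]=0$), a direct Wick-formula computation will give $u_g\, \cX(L)\, u_g^* = \cX(\pi_g L)$ in $\rL^2(M)$ for every $g \in G$.

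Set $\xi_n := x_n \xi_\varphi$, with $C := \sup_n \|\xi_n\|^2 < \infty$ because $(x_n)_n \in \mathfrak M^\omega(M)$. The hypothesis $x \in \rL(G)' \cap M^\omega$ delivers $\lim_{n \to \omega} \|u_g \xi_n u_g^* - \xi_n\| = 0$ for every $g \in G$. Combined with the unitary intertwining $P_{\cX(L)} \circ \Ad(u_g) = \Ad(u_g) \circ P_{\cX(\pi_g^{-1} L)}$ from the first paragraph, this will yield
\[
\lim_{n \to \omega} \|P_{\cX(L)}(\xi_n)\| = \lim_{n \to \omega} \|P_{\cX(\pi_g^{-1} L)}(\xi_n)\| \quad \text{for every } g \in G.
\]

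The heart of the argument is then to harvest many such translates via weak mixing. Since $\pi : G \curvearrowright H^\wm$ is weakly mixing, for every finite-dimensional $V \subset H^\wm$ and every $\varepsilon > 0$ there exists $g \in G$ with $\|P_V \pi_g P_V\| < \varepsilon$. Using that $L \subset K_\R^\wm + \mathrm{i} K_\R^\wm \subset H^\wm$ is finite-dimensional, I will inductively pick $g_1, \dots, g_N \in G$ making the subspaces $\pi_{g_1}^{-1} L, \dots, \pi_{g_N}^{-1} L$ pairwise $\varepsilon$-orthogonal in $H$: given $g_1, \dots, g_k$, set $V_k := L + \pi_{g_1}^{-1} L + \cdots + \pi_{g_k}^{-1} L \subset H^\wm$ and apply weak mixing to produce $g_{k+1}$ with $\|P_{V_k} \pi_{g_{k+1}} P_{V_k}\| < \varepsilon$; because $L \subset V_k$ and each $\pi_{g_j}^{-1} L \subset V_k$ for $j \leq k$, a short calculation bounds $\|P_{\pi_{g_{k+1}}^{-1} L} P_{\pi_{g_j}^{-1} L}\|$ by $\|P_{V_k} \pi_{g_{k+1}} P_{V_k}\| < \varepsilon$. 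By the first paragraph, the projections $Q_i := P_{\cX(\pi_{g_i}^{-1} L)}$ then satisfy $\|Q_i Q_j\| \leq \varepsilon$ for $i \neq j$.

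To finish, set $A := \sum_{i=1}^N Q_i$; it is self-adjoint, and from $A^2 = A + \sum_{i \neq j} Q_i Q_j$ together with $\|A^2\| = \|A\|^2$ one gets $\|A\|^2 \leq \|A\| + N^2 \varepsilon$, hence $\|A\| \leq 1 + N^2 \varepsilon$ provided $N^2 \varepsilon \leq 1$. Therefore
\[
\sum_{i=1}^N \|Q_i \xi_n\|^2 = \langle A \xi_n, \xi_n\rangle \leq (1 + N^2 \varepsilon)\, C,
\]
and combined with the centrality estimate this yields $N \cdot \lim_{n \to \omega} \|P_{\cX(L)}(\xi_n)\|^2 \leq (1 + N^2 \varepsilon)\, C$. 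Taking $\varepsilon := N^{-3}$ and letting $N \to \infty$ will conclude the proof. I expect the main obstacle to be the inductive $\varepsilon$-orthogonalization step: extracting from weak mixing the correct quantitative statement that $\|P_V \pi_g P_V\|$ can be made arbitrarily small for any finite-dimensional $V \subset H^\wm$, and then balancing $\varepsilon$ against $N$ so that the sum-of-projections estimate degrades at a rate that is still beaten by the averaging $1/N$.
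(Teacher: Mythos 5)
Your proof is correct and follows essentially the same strategy as the paper's: weak mixing produces $N$ pairwise almost-orthogonal translates of $L$, the $\rL(G)$-centrality of $(x_n)^\omega$ equates (along $\omega$) the mass of $x_n\xi_\varphi$ on $\cX(L)$ with its mass on each translate, and a quasi-orthogonality/averaging bound forces that common mass to be $O(1/N)$. The only differences are cosmetic: you control the cross terms through the operator-norm identity $\|P_{\cX(V)}P_{\cX(W)}\| = \|P_V P_W\|$ and the quadratic inequality for $\bigl\|\sum_i Q_i\bigr\|$, while the paper uses vector-level $\varepsilon$-orthogonality of the subspaces $\cX(\pi_{g_i}L)$ together with Cauchy--Schwarz.
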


\begin{proof}
Fix an integer $N \geq 1$. Since the representation $\pi|_{H^{\wm}}$ is weakly mixing and since $L \subset K_\R^{\wm} + {\rm i} K_\R^{\wm} \subset H^{\wm}$ is finite dimensional, we may choose inductively elements $g_1,\dots, g_N \in G$ such that $\pi_{g_i}(L) \perp_{1/(N\dim(L))} \pi_{g_j}(L)$ for all $1\leq i<j\leq N$. We start by proving the following claim.
\begin{claim}\label{claim-epsilon}
For all $1 \leq i < j \leq N$, we have $\cX(\pi_{g_i}(L)) \perp_{1/N} \cX(\pi_{g_j}(L))$.
\end{claim}

Indeed, let $1 \leq i < j \leq N$ and put $g=g_j^{-1}g_i \in G$. Put $U_g = u_g Ju_gJ \in \mathcal U(\rL^2(M))$. We have $U_g(\cX(L))=\cX(\pi_g(L))$.  Let $(l_a)_{1 \leq a \leq \dim(L)}$ be an orthonormal basis for the space $L$. Let $\xi, \eta \in \cX(L)$ be any elements. Observe that we may and will identify $\rL^2(M) \ominus \rL^2(\rL(G))$ with $H \otimes \rL^2(M)$ via the unitary operator 
$$U : \rL^2(M) \ominus \rL^2(\rL(G)) \to H \otimes \rL^2(M) : \zeta_1 \otimes \cdots \otimes \zeta_\ell \otimes \delta_g \mapsto \zeta_1 \otimes (\zeta_2 \otimes \cdots \otimes \zeta_\ell \otimes \delta_g)$$
where $\ell \geq 1$, $\zeta_1, \dots, \zeta_\ell \in H$, $g \in G$. We can then write $\xi=\sum_{a=1}^{\dim(L)} l_a\otimes \xi_a$ and $\eta=\sum_{b=1}^{\dim(L)} l_b\otimes \eta_b$ for some $\xi_a, \eta_b \in \rL^2(M)$. Using Cauchy--Schwarz inequality and the assumption that $\pi_g(L)\perp_{(1/N \dim(L))} L$, we have
\begin{align*}
|\langle U_g( \xi),\eta\rangle|&\leq\sum_{a,b = 1}^{\dim(L)}|\langle\pi_g(l_a),l_b\rangle|\ |\langle U_g(\xi_a),\eta_b\rangle| \\
&\leq \frac{1}{N\dim(L)}\sum_{a,b = 1}^{\dim(L)} \|\xi_a\|\ \|\eta_b\| \\
&\leq \frac{1}{N}\|\xi\|\ \|\eta\|.
\end{align*}
This proves that $\cX(\pi_g(L))\perp_{1/N}\cX(L)$ and so $\cX(\pi_{g_i}(L)) \perp_{1/N} \cX(\pi_{g_j}(L))$.

For all $g\in G$ and all $n\in \N$, we have
\begin{align*}
  \|P_{\cX(L)}(x_n\xi_\varphi)\|^2&=\langle P_{\cX(L)}(x_n\xi_\varphi),x_n\xi_\varphi\rangle \\
  &= \langle U_g(P_{\cX(L)}(x_n\xi_\varphi)),U_g(x_n\xi_\varphi)\rangle \\
  &=\langle P_{\cX(\pi_g(L))}(U_g(x_n\xi_\varphi)),U_g(x_n\xi_\varphi)\rangle
\end{align*}
Since $u_g \in \rL(G) \subset M_\varphi$, we have $U_g(x_n\xi_\varphi) =u_gx_nu_g^\ast\xi_\varphi$. Since $x = (x_n)^\omega\in \rL(G)' \cap  M^\omega$, we have $\lim_{n\to\omega}\|(u_gx_nu_g^\ast - x_n)\xi_\varphi\|=0$ and so
\[\lim_{n\to\omega} \|P_{\cX(L)}(x_n\xi_\varphi)\|^2=\lim_{n\rightarrow\omega} \langle P_{\cX(\pi_g(L))}(x_n\xi_\varphi),x_n\xi_\varphi\rangle.\]
Applying the above result to $g_1, \dots g_N \in G$, we obtain, using Cauchy--Schwarz inequality,
\begin{align*}
    \lim_{n\to\omega} \|P_{\mathcal X(L)}(x_n\xi_\varphi)\|^2&=\lim_{n\rightarrow\omega} \frac1N \sum_{i=1}^N \langle P_{\cX(\pi_{g_i}(L))}(x_n\xi_\varphi),x_n\xi_\varphi\rangle \\
 &=\lim_{n\to\omega} \frac1N\left\langle \sum_{i=1}^N P_{\cX(\pi_{g_i}(L))}(x_n\xi_\varphi),x_n\xi_\varphi\right\rangle \\
 &\leq \lim_{n\to\omega} \frac1N \left\Vert\sum_{i=1}^N P_{\cX(\pi_{g_i}(L))}(x_n\xi_\varphi)\right\Vert\|x_n\|_\varphi.
\end{align*}
Using Claim \ref{claim-epsilon}, for all $n \in \N$, we have
\begin{align*}
  \left\Vert\sum_{i=1}^N P_{\mathcal X(\pi_{g_i}(L))}(x_n \xi_\varphi)\right\Vert^2&= \sum_{1\leq i,j\leq N} \langle P_{\mathcal X(\pi_{g_i}(L))}(x_n \xi_\varphi),P_{\mathcal X(\pi_{g_i}(L))}(x_n \xi_\varphi)\rangle \\
  &\leq \sum_{i=1}^N \|P_{\mathcal X(\pi_{g_i}(L))}(x_n \xi_\varphi)\|^2 + \frac1N \sum_{1 \leq i\neq j \leq N}\|x_n\|_\varphi^2 \\
  &\leq N\|x_n\|_\varphi^2 + \frac{N(N - 1)}{N}\|x_n\|_\varphi^2 = (2N - 1)\|x_n\|_\varphi^2.
\end{align*}

In the end, we obtain $\lim_{n\to \omega} \|P_{\cX(L)}(x_n\xi_\varphi)\|^2\leq \frac{\sqrt{2 N - 1}}{N} \|x\|_{\varphi^\omega}^2$. Since this is true for every $N \geq 1$, we obtain that $\lim_{n\to \omega} \|P_{\cX(L)}(x_n\xi_\varphi)\| = 0$.
\end{proof}

Following \cite[Appendix C]{HI15}, define $K_{\an}^{\wm} = \bigcup_{\lambda > 0} \mathbf 1_{[\lambda^{-1}, \lambda]}(A)(H^{\wm}) \subset K_\R^{\wm} + {\rm i} K_\R^{\wm}$. Observe that $\overline{K_{\an}^{\wm}} = K_{\an}^{\wm}$ and that $K_{\an}^{\wm} \subset K_\R^{\wm} + {\rm i} K_\R^{\wm}$ is a dense subspace in $H^{\wm}$ of  elements $\eta \in K_\R^{\wm} + {\rm i} K_\R^{\wm}$ for which the map $\R \mapsto K_\R^{\wm} + {\rm i} K_\R^{\wm} : t \mapsto U_t \eta$ extends to a $(K_\R^{\wm} + {\rm i} K_\R^{\wm})$-valued entire analytic map. For all $\eta \in K_{\an}^{\wm}$, the element $W(\eta)$ is entire analytic for the modular automorphism group $\sigma^\varphi$ and we have $\sigma_z^\varphi(W(\eta)) = W(A^{{\rm i} z}\eta)$ for every $z \in \C$.

\begin{lem}\label{lem-key}
 Assume that $H_\R^{\wm} \neq 0$. Let $x = (x_n)^\omega\in \rL(G)' \cap M^\omega$, $\xi \in K_{\an}^{\wm}$ any unit vector and $(t_n)_{n\in \N}$ any real-valued sequence such that
  \[ \lim_{n \to \omega} \|x_nW(U_{t_n}\xi)- W(\xi)x_n \|_\varphi=0.\]
  Then we have $x\in \rL(G)^\omega$.
\end{lem}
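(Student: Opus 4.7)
The plan is to combine the commutation hypothesis with the fact that $x$ commutes with $\rL(G)$ and the weak mixing of $\pi|_{H^{\wm}}$, in the spirit of the proof of Lemma \ref{lem-vanishing}. Under the identification $\rL^2(M) = \cF(H)\otimes\ell^2(G)$, the subspace $\rL^2(\rL(G))$ corresponds to $\C\Omega\otimes\ell^2(G)$, so showing $x\in\rL(G)^\omega$ will amount to proving $\lim_{n\to\omega}\|P_\cY(x_n\xi_\varphi)\| = 0$ with $\cY = \bigoplus_{k\geq 1}H^{\otimes k}\otimes\ell^2(G)$; equivalently, $\lim_{n\to\omega}\|v_n^{(k)}\| = 0$ for every $k\geq 1$, where $v_n^{(k)} := P_{H^{\otimes k}\otimes\ell^2(G)}(x_n\xi_\varphi)$.

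First I would rewrite the commutation on the Hilbert-space side. Since $\xi\in K_{\an}^{\wm}$ is entire analytic for $\sigma^\varphi$, the KMS relation gives $\xi_\varphi\cdot W(A^{-1/2}U_{t_n}\xi) = \sigma^\varphi_{-\i/2}(W(A^{-1/2}U_{t_n}\xi))\xi_\varphi = W(U_{t_n}\xi)\xi_\varphi$. Because left and right actions of $M$ on $\rL^2(M)$ commute, the hypothesis rewrites as
\[ W(\xi)\cdot(x_n\xi_\varphi) - (x_n\xi_\varphi)\cdot W(A^{-1/2}U_{t_n}\xi) \xrightarrow[n\to\omega]{} 0. \]
On $\cF(H)\otimes\ell^2(G)$, the left action of $W(\xi)$ is $(\ell(\xi)+\ell(\overline\xi)^*)\otimes 1$. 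Using the covariance $u_hW(U_{t_n}\xi)u_h^* = W(\pi_h(U_{t_n}\xi))$ and the Wick formula, I would identify the right action of $W(A^{-1/2}U_{t_n}\xi)$ on $w\otimes\delta_h$ as a right-creation appending $\pi_h(U_{t_n}\xi)$ at the end of $w$ plus a right-annihilation contracting the last factor of $w$ with $\pi_h(U_{t_n}\xi)$.

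Next, using that $x$ commutes with $\rL(G)$ in the ultrapower, that is $\lim_{n\to\omega}\|u_gx_n - x_nu_g\|_\varphi = 0$ for every $g\in G$, I would upgrade the hypothesis to the family
\[ \lim_{n\to\omega}\|x_nW(\pi_g(U_{t_n}\xi)) - W(\pi_g\xi)x_n\|_\varphi = 0, \quad g\in G, \]
so that the asymptotic identity of the previous paragraph is available with $\xi$ replaced by any $\pi_g\xi$. Then, for each fixed $k\geq 1$ and $N\geq 1$, I would use the weak mixing of $\pi|_{H^{\wm}}$ to select $g_1,\dots,g_N\in G$ with the unit vectors $\pi_{g_j}\xi\in H^{\wm}$ pairwise $1/N$-orthogonal and apply the $N$ asymptotic identities simultaneously. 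Projecting onto $H^{\otimes k}\otimes\ell^2(G)$, the left-creation piece of $W(\pi_{g_j}\xi)(x_n\xi_\varphi)$ lives in the subspace $\pi_{g_j}\xi\otimes H^{\otimes(k-1)}\otimes\ell^2(G)$, and Claim \ref{claim-epsilon} in the proof of Lemma \ref{lem-vanishing} transfers the $1/N$-orthogonality from the vectors $\pi_{g_j}\xi$ to these tensor subspaces. Combining this with the uniform bound $\|x_nW(\pi_g(U_{t_n}\xi))\xi_\varphi\|\leq\|x_n\|_\infty$ and a Bessel-type estimate, I would arrive at a bound of the form $N\|v_n^{(k)}\|^2 \leq C\|x\|_{\varphi^\omega}^2 + o_{n\to\omega}(1)$ which, after dividing by $N$ and letting $N\to\infty$, forces $\lim_{n\to\omega}\|v_n^{(k)}\| = 0$, as required.

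The hard part will be the last step: isolating the contribution of $v_n^{(k)}$ against the crossed terms, where the left-creation decorates the first tensor factor with $\pi_{g_j}\xi$ while the right-creation decorates the last tensor factor with $\pi_{g_jh}(U_{t_n}\xi)$ twisted by the $\delta_h$ coordinate, and propagating the $1/N$-orthogonality through the tensor structure in the quantitative way already used for Lemma \ref{lem-vanishing}.
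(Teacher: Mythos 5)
There is a genuine gap at the very first reduction. Proving that $\lim_{n\to\omega}\|v_n^{(k)}\|=0$ for every \emph{fixed} $k\geq 1$ is necessary but not sufficient for $\lim_{n\to\omega}\|P_{\cY}(x_n\xi_\varphi)\|=0$: indeed $\|P_{\cY}(x_n\xi_\varphi)\|^2=\sum_{k\geq 1}\|v_n^{(k)}\|^2$, and the bound you aim for, $\|v_n^{(k)}\|^2\lesssim \|x\|_{\varphi^\omega}^2/N$ for each fixed $k$, is neither uniform nor summable in $k$; nothing in your scheme rules out the $\rL(G)$-orthogonal mass of $x_n\xi_\varphi$ escaping to words of length tending to infinity with $n$. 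Any successful argument must treat all word lengths simultaneously. This is exactly what the paper's proof does: it decomposes $\rL^2(M)\ominus\rL^2(\rL(G))$ according to the \emph{first} tensor leg only. With $L=\spn\{\xi,\overline\xi\}$ and $y_n=x_n-\rE_{\rL(G)}(x_n)$, Lemma \ref{lem-vanishing} (your Claim \ref{claim-epsilon} averaging, but applied to the single fixed vector $x_n\xi_\varphi$) gives $\lim_{n\to\omega}\|P_{\cX(L)}(y_n\xi_\varphi)\|=0$; on the remaining piece $P_{\cX(L^\perp)}(y_n\xi_\varphi)$ the annihilation part $\ell(\overline\xi)^*$ of $W(\xi)$ vanishes identically, $\ell(\xi)$ is isometric, and by the Wick formula $\xi\otimes P_{\cX(L^\perp)}(y_n\xi_\varphi)$ is orthogonal both to the right-multiplication term $JW(A^{1/2+{\rm i}t_n}\overline\xi)J\,y_n\xi_\varphi$ (which only appends to or contracts the last leg, hence keeps the first leg in $L^\perp$ or produces words of length at most one) and to the $\rE_{\rL(G)}(x_n)$-terms. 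The hypothesis then forces $\|P_{\cX(L^\perp)}(y_n\xi_\varphi)\|\to 0$ uniformly over all lengths, with no further averaging over group translates.

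The quantitative core of your plan is also not a valid mechanism as described. A Bessel-type estimate requires projecting one fixed vector onto almost orthogonal subspaces, whereas your $j$-th asymptotic identity involves the $j$-dependent vectors $W(\pi_{g_j}\xi)x_n\xi_\varphi\approx x_n(\pi_{g_j}U_{t_n}\xi\otimes\delta_e)$; moreover, after projecting onto length $k$, the term you want to isolate, $\pi_{g_j}\xi\otimes v_n^{(k-1)}$, must be separated not only from the right-action terms but also from the left-annihilation contribution $\ell(\overline{\pi_{g_j}\xi})^*v_n^{(k+1)}$, whose component along $\pi_{g_j}\xi\otimes H^{\otimes(k-1)}\otimes\ell^2(G)$ you do not control (the paper kills this term exactly by first discarding, via Lemma \ref{lem-vanishing}, the part of $y_n\xi_\varphi$ whose first leg meets $L$); and with only $1/N$-orthogonality the $N(N-1)$ cross terms are of the same order as the $N$ main terms, so the inequality $N\|v_n^{(k)}\|^2\leq C\|x\|_{\varphi^\omega}^2+o(1)$ does not follow from the ingredients you list. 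By contrast, your KMS rewriting of the right action and the upgrade of the hypothesis to all $\pi_g\xi$ are correct (the latter needs the uniform bound $\sup_n\|W(A^{1/2+{\rm i}t_n}\overline{\pi_g\xi})\|_\infty<\infty$, available because $\xi\in K_{\an}^{\wm}$), but these are not where the difficulty lies, and the translates $\pi_g\xi$ are in fact not needed beyond Lemma \ref{lem-vanishing}.
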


\begin{proof}
Observe that for every $t \in \R$, we have $\sigma^\varphi_{-{\rm i}/2}(W(U_t \xi)^*) = \sigma^\varphi_{-{\rm i}/2}(W(U_t \overline \xi)) = W(A^{1/2 + {\rm i}t} \overline \xi)$. For every $n \in \N$, put $y_n=x_n-\rE_{\rL(G)}(x_n) \in M \ominus \rL(G)$. Put $y = x - \rE_{\rL(G)^\omega}(x) \in M^\omega \ominus \rL(G)^\omega$ so that $y = (y_n)^\omega$. For all $n\in\N$, we have
\begin{align}\label{eq-one}
 (x_n W(U_{t_n} \xi) - W(\xi) x_n)\xi_\varphi &=  (y_n W(U_{t_n}\xi) -  W(\xi)y_n) \xi_\varphi \\ \nonumber
 & \quad + (\rE_{\rL(G)}(x_n) W(U_{t_n}\xi) -  W(\xi)\rE_{\rL(G)}(x_n)) \xi_\varphi \\ \nonumber
 &= J W(A^{1/2 + {\rm i} t_n}\overline \xi) J y_n \xi_\varphi -  W(\xi)y_n \xi_\varphi \\ \nonumber
 & \quad + \rE_{\rL(G)}(x_n) W(U_{t_n}\xi)\xi_\varphi -  W(\xi)\rE_{\rL(G)}(x_n) \xi_\varphi.
\end{align}
Put $L = \spn \{\xi, \overline\xi\} \subset K_\R^{\wm} + {\rm i} K_\R^{\wm}$. Lemma \ref{lem-vanishing} implies that 
  \[ \lim_{n \to\omega} \|y_n\xi_\varphi - P_{\cX(L^\perp)}(y_n\xi_\varphi)\|_2=0.\]
Since $(JW(A^{1/2 + {\rm i} t_n}\overline \xi) J)_n$ is uniformly bounded, in the ultraproduct Hilbert space $\rL^2(M)^\omega$, we have the following equalities
\begin{align}\label{eq-two}
(W(\xi)y_n \xi_\varphi)_\omega &= (W(\xi) P_{\cX(L^\perp)}(y_n \xi_\varphi))_\omega \\ \nonumber
(J W(A^{1/2 + {\rm i} t_n}\overline \xi) J y_n \xi_\varphi)_\omega &=  (J W(A^{1/2 + {\rm i} t_n}\overline \xi) J P_{\cX(L^\perp)}(y_n \xi_\varphi))_\omega.
\end{align}
Using the Wick formula, for every $n \in \N$, we have that
\begin{itemize}
\item $W(\xi) P_{\cX(L^\perp)}(y_n \xi_\varphi)$ lies in the closure in $\rL^2(M)$ of the linear span of all the elements of the form $\xi \otimes \eta \otimes e_1 \otimes \cdots \otimes e_k \otimes \delta_h$ where $k \geq 0$, $\eta \in L^\perp$, $e_1, \dots, e_k \in K_\R + {\rm i} K_\R$, $h \in G$.
\item $J W(A^{1/2 + {\rm i} t_n}\overline \xi) J P_{\cX(L^\perp)}(y_n \xi_\varphi)$  lies in the closure in $\rL^2(M)$ of the linear span of all the elements of the form $\delta_h$ or $\eta \otimes e_1 \otimes \cdots \otimes e_k \otimes \delta_h$ where $k \geq 0$, $\eta \in L^\perp$, $e_1, \dots, e_k \in K_\R + {\rm i} K_\R$, $h \in G$.
\item $\rE_{\rL(G)}(x_n) W(U_{t_n}\xi)\xi_\varphi -  W(\xi)\rE_{\rL(G)}(x_n) \xi_\varphi$ lies in the closure in $\rL^2(M)$ of the linear span of all the elements of the form $\delta_h$ or $e \otimes \delta_h$ where $e \in K_\R +{\rm i} K_\R$, $h \in G$.
\end{itemize}
This implies that $W(\xi) P_{\cX(L^\perp)}(y_n \xi_\varphi)$ is orthogonal to $J W(A^{1/2 + {\rm i} t_n}\overline \xi) J P_{\cX(L^\perp)}(y_n \xi_\varphi)$ and to  $\rE_{\rL(G)}(x_n) W(U_{t_n}\xi)\xi_\varphi -  W(\xi)\rE_{\rL(G)}(x_n) \xi_\varphi$ for every $n \in \N$. Using moreover the assumption together with  \eqref{eq-one} and \eqref{eq-two}, we obtain $\lim_{n \to \omega} \|W(\xi) P_{\cX(L^\perp)}(y_n \xi_\varphi)\| = 0$. Since for every $n \in \N$, we have $W(\xi) P_{\cX(L^\perp)}(y_n \xi_\varphi) = \xi \otimes P_{\cX(L^\perp)}(y_n \xi_\varphi)$, it follows that $\lim_{n \to \omega} \|P_{\cX(L^\perp)}(y_n \xi_\varphi)\| = 0$ and so $\lim_{n \to \omega} \|y_n \xi_\varphi\| = 0$. This shows that $x - \rE_{\rL(G)^{\omega}}(x) = y = 0$ and so $x = \rE_{\rL(G)^{\omega}}(x) \in \rL(G)^\omega$.
\end{proof}

\begin{proof}[Proof of Theorem \ref{letterthm-full}]
Put $M = \Gamma(U, \pi)\dpr = N \rtimes G$. 

$(\rm i)$ We first consider the case when $G$ is finite. In that case, the image of $\sigma^\pi(G)$ in $\Out(N)$ is finite thus discrete. Then \cite[Theorem B]{Ma16} implies that $M = N \rtimes G$ is a full factor.

We next consider the case when $G$ is infinite. Since $\pi(G) \subset \mathcal O(H_\R)$ is infinite ($\pi$ is faithful) and discrete with respect to the strong topology, it follows that the weakly mixing part of $\pi$ is nonzero, that is, $H_\R^{\wm} \neq 0$. We may then choose a unit vector $\xi \in K_{\an}^{\wm}$.

Let $x = (x_n)^\omega \in M' \cap M^\omega$ be any element. Since $x = (x_n)^\omega \in \rL(G)' \cap M^\omega$ and since $\lim_{n \to \omega} \|x_n W(\xi) - W(\xi) x_n\|_\varphi = 0$, Lemma \ref{lem-key} implies that $x \in \rL(G)^\omega$. We may then replace each $x_n$ by $\rE_{\rL(G)}(x_n)$ and assume that $x = (x_n)^\omega$ where $x_n \in \rL(G)$. Since $\pi(G)\subset O(H_\R)$ is discrete with respect to the strong topology, $\sigma^\pi(G)\subset \Aut(M)$ is discrete with respect to the $u$-topology. Since $\pi$ is faithful and since $\sigma^\pi$ is $\varphi$-preserving, there exist $\kappa > 0$ and $y_1,\dots, y_m\in N$ such that for all $g\in G \setminus \{0\}$, we have
\[ \sum_{k = 1}^m \| \sigma^\pi_g(y_k)-y_k\|^2_\varphi\geq \kappa.\]
 Write $x_n=\sum_{g\in G} (x_n)^g u_g$ for the Fourier decomposition of $x_n$ in $\rL(G)$. We have
 \begin{align*}
\sum_{k=1}^m \| x_ny_k-y_kx_n\|^2_\varphi &=  \sum_{g\in G} |(x_n)^g|^2\sum_{k=1}^m\| \sigma^\pi_g(y_k)-y_k\|^2_\varphi \\
&\geq \kappa\sum_{g\in G \setminus \{e\}} |(x_n)^g|^2 \\
&=\kappa\|x_n-\tau(x_n)1\|^2_2.
 \end{align*}
Since $x \in M' \cap \rL(G)^\omega$, we have $\lim_{n \to \omega} \|x_ny_k-y_kx_n\|_\varphi=0$ for all $1\leq k\leq m$. This implies that $\lim_{n \to \omega}\| x_n -\tau(x_n)1\|_2=0$ and so $x \in \C 1$. This shows that $M$ is full.

$(\rm ii)$ Assume that $(t_n)_{n\in\N}$ is a sequence converging to $0$ with respect to $\tau(M)$. By definition, it means that the class of $\sigma^\varphi_{t_n}$ converges to $1$ in $\Out(M)$. Therefore, there exists a sequence of unitaries $u_n\in \cU(M)$ such that $\Ad(u_n)\circ \sigma^\varphi_{t_n} \to  \id_M$ with respect to the $u$-topology in $\Aut(M)$. Since $\Aut(M)$ is a topological group, $\sigma^\varphi_{-t_n}\circ \Ad(u_n^*) = (\Ad(u_n)\circ \sigma^\varphi_{t_n})^{-1}\to \id_M$ with respect to the $u$-topology in $\Aut(M)$. This implies that $\lim_n \|u_n \varphi - \varphi u_n\| = 0$. Let $\omega \in \beta(\N) \setminus \N$ be any nonprincipal ultrafilter. We have $(u_n)_n \in \mathfrak M^\omega(M)$ and $(u_n)^\omega\in (M^\omega)_{\varphi^\omega}$.

For every $g\in G$, we have $\lim_{n \to \omega} \|u_n\sigma_{t_n}^\varphi(u_g)-u_g u_n \|_\varphi =0$. Since $\sigma^\varphi_{t_n}(u_g)=u_g$, we obtain that $(u_n)^\omega \in \rL(G)' \cap M^\omega$. We also have $\lim_{n \to \omega} \|u_n W(U_{t_n}\xi)- W(\xi)u_n\|_\varphi  = \lim_{n \to \omega} \|u_n \sigma_{t_n}^\varphi(W(\xi))- W(\xi)u_n\|_\varphi = 0$. Lemma \ref{lem-key} implies that
$(u_n)^\omega\in \rL(G)^\omega$. Since $u = (u_n)^\omega \in \rL(G)^\omega$, we may choose unitaries  $v_n \in \mathcal U(\rL(G))$ such that $u  = (v_n)^\omega$. We then have $\Ad(v_n^* u_n) \to \id_M$ as $n \to \omega$ and so $\Ad(v_n) \circ \sigma_{t_n}^\varphi \to \id_M$ as $n \to \omega$ with respect to the $u$-topology in $\Aut(M)$. 

Write $v_n=\sum_{g\in G}(v_n)^gu_g$ for the Fourier decomposition of $v_n$ in $\rL(G)$. We claim that 
\begin{equation}\label{eq-gap}
\exists \kappa > 0, \exists y_1,\dots,y_m\in N, \forall g\in G\setminus\{e\}, \forall t \in \R, \quad \sum_{k=1}^m \|(\sigma^\pi_g \circ \sigma^\varphi_t)(y_k)-(y_k)\|^2_\varphi\geq \kappa.
\end{equation}
Indeed, if \eqref{eq-gap} does not hold, then there exist $g_i \in G \setminus \{e\}$ and $t_i \in \R$ such that $\sigma^\pi_{g_i}\circ \sigma^\varphi_{t_i}\to \id_N$ with respect to the $u$-topology in $\Aut(N)$. This implies that $\rho_{(t_i, g_i)} = U_{t_i} \pi_{g_i}\to 1$ strongly, contradicting the assumption on $\rho$. Then \eqref{eq-gap} holds. For every $n \in \N$, we have
\begin{align*}
\sum_{k=1}^m \| v_n \sigma^\varphi_{t_n}(y_k) - y_k v_n\|^2_\varphi &=  \sum_{g\in G} |(v_n)^g|^2\sum_{k=1}^m\| (\sigma^\pi_g \circ \sigma^\varphi_{t_n})(y_k) - y_k\|^2_\varphi \\
&\geq \kappa\sum_{g\in G \setminus \{e\}} |(v_n)^g|^2 \\
&=\kappa\|v_n-\tau(v_n)1\|^2_2.
\end{align*}
Since $\Ad(v_n) \circ \sigma_{t_n}^\varphi \to \id_M$ as $n \to \omega$ with respect to the $u$-topology in $\Aut(M)$, we have $\lim_{n \to \omega} \|v_n\sigma_{t_n}^\varphi(y_k)-y_k v_n\|_\varphi=0$ for all $1\leq k\leq m$. This implies that $\lim_{n \to \omega}\| v_n -\tau(v_n)1\|_2=0$ and so $\lim_{n \to \omega}\| u_n -\varphi(u_n)1\|_\varphi=0$ . Since this is true for every nonprincipal ultrafilter $\omega \in \beta(\N) \setminus \N$, we have $\lim_n \|u_n - \varphi(u_n)1\|_\varphi = 0$ and so $\Ad(u_n) \to \id_M$ with respect to the $u$-topology in $\Aut(M)$. Therefore $\sigma_{t_n}^\varphi\to \id$ with respect to the $u$-topology in $\Aut(M)$. This further implies that $U_{t_n}\to 1$ strongly which means that $t_n\rightarrow 0$ with respect to $\tau(U)$.
\end{proof}

\section{Amenable and Gamma absorption}

Before proving Theorem \ref{letterthm-maximal}, we state a type ${\rm III}$ version of Krogager--Vaes' result \cite[Theorem 5.1 (2)]{KV16}.

\begin{thm}\label{thm-maximal}
Let $U : \R \curvearrowright H_\R$ be any strongly continuous orthogonal representation. Let $G$ be any  countable group and $\pi : G \curvearrowright H_\R$ any orthogonal representation such that $[U, \pi] = 0$. Put $M = \Gamma(U, \pi)\dpr$.
\begin{itemize}
\item [$(\rm i)$] Assume that $\pi : G \curvearrowright H_\R$ is weakly mixing. Let $\rL(G) \subset P \subset M$ be any intermediate von Neumann subalgebra with expectation such that $P \lessdot_M \rL(G)$. Then $P = \rL(G)$.

\item [$(\rm ii)$] Assume that $\pi : G \curvearrowright H_\R$ is mixing. Let $P \subset M$ be any von Neumann subalgebra with expectation such that $P \lessdot_M \rL(G)$ and $P \cap \rL(G)$ is diffuse. Then $P \subset \rL(G)$.
\end{itemize}
\end{thm}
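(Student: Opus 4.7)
The strategy I propose is to reduce Theorem \ref{thm-maximal} to the tracial Krogager--Vaes result [KV16, Theorem 5.1] by passing to the continuous core, in combination with the mixing toolkit of Appendix \ref{appendix-mixing}. Put $N = \Gamma(H_\R, U)\dpr$ and $\varphi = \varphi_U \circ \rE_N$. Since the Bogoljubov action $\sigma^\pi$ commutes with $\sigma^{\varphi_U}$, it lifts to a $\Tr_{\varphi_U}$-preserving action $\core(\sigma^\pi) : G \curvearrowright \core_{\varphi_U}(N)$ and we have the canonical identification
\[
\core_\varphi(M) \;=\; \core_{\varphi_U}(N) \rtimes G.
\]
Because $\rL(G) \subset M_\varphi$, the continuous core of the inclusion $\rL(G) \subset M$ identifies with $\widetilde{\rL(G)} := \rL_\varphi(\R) \ovt \rL(G) \subset \core_\varphi(M)$.

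First I would lift the relative amenability assumption to the core: by the standard characterization, $P \lessdot_M \rL(G)$ is equivalent to $\core_\varphi(P) \lessdot_{\core_\varphi(M)} \widetilde{\rL(G)}$. The crucial observation is that the trace-preserving action $\core(\sigma^\pi) : G \curvearrowright \core_{\varphi_U}(N)$ inherits the (weak) mixing property of $\pi$: the associated $G$-representation on $\rL^2(\core_{\varphi_U}(N)) \ominus \rL^2(\rL_\varphi(\R))$ is essentially the tensor product of $\pi$ with the left regular representation of $\R$, and (weak) mixing is preserved under such tensoring. Combined with the free-product-with-amalgamation structure of $\core_{\varphi_U}(N)$ over $\rL_\varphi(\R)$ implicit in Shlyakhtenko's description of the core of a free Araki--Woods factor, this places us in a setting where the semifinite analogue of [KV16, Theorem 5.1], supplemented by [HU15b, Theorem 3.1] for the remaining type-III technicalities, applies. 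In case $(\rm i)$ one thereby obtains $\core_\varphi(P) = \widetilde{\rL(G)}$, and in case $(\rm ii)$ (noting that $\core_\varphi(P) \cap \widetilde{\rL(G)}$ contains the diffuse algebra $(P \cap \rL(G)) \ovt \rL_\varphi(\R)$) one obtains $\core_\varphi(P) \subset \widetilde{\rL(G)}$.

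Descending from the core back to $M$ is then painless: since $\pi_\varphi(M) = \core_\varphi(M)^{\hat\sigma}$ and $\widetilde{\rL(G)}^{\hat\sigma} = \pi_\varphi(\rL(G))$ under the dual $\R$-action, intersecting with $\pi_\varphi(M)$ converts the core conclusions into $P = \rL(G)$ in case $(\rm i)$ and $P \subset \rL(G)$ in case $(\rm ii)$.

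The main obstacle I anticipate is the passage from the finite tracial [KV16, Theorem 5.1] to the semifinite setting of the core. Two natural routes are available: either compress by finite-trace projections in $\rL_\varphi(\R)$ and exhaust to stitch the tracial conclusions together, or re-run the Krogager--Vaes argument directly with the semifinite trace $\Tr_{\varphi_U}$. In either case the indispensable inputs are Popa's intertwining machinery together with the (weak) mixing control of the Bogoljubov action, both of which carry over thanks to the inclusion-theoretic mixing results of Appendix \ref{appendix-mixing}; in particular, for case $(\rm ii)$ those results ensure that any element of $M$ whose conjugation normalizes a diffuse subalgebra of $\rL(G)$ must already belong to $\rL(G)$, which is the structural statement underlying the final descent.
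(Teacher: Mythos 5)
The central step of your plan is deferred to a ``semifinite analogue of [KV16, Theorem 5.1]'' for the continuous core, and this is a genuine gap: no such statement is available, and it is not a routine transfer. The core $\core_{\varphi_U}(\Gamma(H_\R,U)\dpr)$ is not a tracial free Gaussian algebra with a free Bogoljubov $G$-action; it is an $\rL(\R)$-valued semicircular system in Shlyakhtenko's sense, so the Fock-space projections and the averaging argument that drive Krogager--Vaes cannot be quoted off the shelf, and your two repair routes do not close the gap: compressing by finite-trace projections of $\rL_\varphi(\R)$ destroys the crossed-product/Fock structure (a corner $q\mathcal{M}q$ is not of the KV form), while ``re-running the argument with the semifinite trace'' means proving an operator-valued version of [KV16, Theorem 5.1] over $\rL(\R)$ -- which is essentially the whole content of the theorem. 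The paper avoids the core altogether and adapts KV directly at the type ${\rm III}$ level: since $\rL(G)\subset M_\varphi$, relative amenability gives a norm-one projection $\Phi:\langle M,\rL(G)\rangle\to P$ extending the (unique) expectation onto $P$; weak mixing of $\pi$ produces $g_1,\dots,g_N$ with $\pi_{g_i}(e)$ almost orthogonal, and averaging $\Phi$ over the unitaries $u_{g_i}\in M_\varphi$ yields $\Phi(\ell(e)\ell(e)^*)=0$ for every unit vector $e\in K_\R$; Kadison's inequality and the Wick formula then give $\Phi(M\ominus\rL(G))=0$, hence $P=\rL(G)$ in case $(\rm i)$, and case $(\rm ii)$ runs the KV argument in this state-preserving framework (using that $P\cap\rL(G)$ is tracial) together with the mixing results of Appendix \ref{appendix-mixing}. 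Your proposal thus hides the real work in an unproven intermediate theorem.

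There are also secondary problems with the reduction itself. The object $\core_\varphi(P)$ only makes sense if $P$ is globally $\sigma^\varphi$-invariant, which is not among the hypotheses ($P$ is merely with expectation); to lift $P\lessdot_M\rL(G)$ to the core you would need to pick an auxiliary state $\phi$ with $\sigma^\phi(P)=P$ and transport everything through $\Pi_{\varphi,\phi}$ (as the paper does in Appendix \ref{appendix-deformation}), and then the identification of the core of $\rL(G)$ with $\rL(G)\ovt\rL_\varphi(\R)$ must be tracked through that isomorphism. Moreover, the $G$-representation on $\rL^2(\core_{\varphi_U}(N))\ominus\rL^2(\rL_\varphi(\R))$ is not ``$\pi$ tensored with the regular representation of $\R$'': it is built from all tensor powers $\pi^{\otimes n}$ (tensored with representations on which $G$ acts trivially); weak and strong mixing do survive this, but the description as written is inaccurate. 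Finally, [HU15b, Theorem 3.1] is not relevant to this statement (it enters only in the proof of Theorem \ref{letterthm-maximal} $(\rm ii)$), so it cannot supply the missing type ${\rm III}$ technicalities here.
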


\begin{proof}
$(\rm i)$ We adapt the proof of \cite[Theorem 5.1 $(2)$]{KV16} for the reader's convenience. Put $M = \Gamma(U, \pi)\dpr$ and denote by $\varphi$ the canonical faithful normal state on $M$. Since $P \lessdot_M \rL(G)$, there exists a norm one projection $\Phi : \langle M, \rL(G)\rangle \to P$ such that $\Phi|_M : M \to P$ is the unique $\varphi$-preserving conditional expectation. Actually there exists a unique faithful normal conditional expectation $\rE_{P} : M \to P$. Indeed, since $\pi$ is weakly mixing, the $\varphi$-preserving action $G \curvearrowright (N, \varphi)$ is $\varphi$-weakly mixing. Theorem \ref{thm-mixing} $(\rm i)$ implies that $P' \cap M \subset \rL(G)' \cap M \subset \rL(G) \subset P$. Then \cite[Th\'eor\`eme 1.5.5]{Co72} implies that there exists a unique faithful normal conditional expectation $\rE_{P} : M \to P$.

As usual, denote by $A$ the infinitesimal generator of the strongly continuous unitary representation $U : \R \curvearrowright H$. Put $K_\R = j(H_\R)$ where $j : H_\R \to H : \xi \mapsto ( \frac{2}{ A^{-1} + 1})^{1/2}\xi$ is the canonical isometric embedding. Recall that $K_\R \cap {\rm i} K_\R = \{0\}$ and $K_\R +{\rm i} K_\R$ is dense in $H$. 

Denote by $(M, \rL^2(M), J, \rL^2(M)_+)$ the standard form of $M$. We identify $\rL^2(M) = \mathcal F(H) \otimes \ell^2(G)$ and we view $M = N \rtimes G \subset \mathbf B(\mathcal F(H) \otimes \ell^2(G))$ as generated by $b \otimes 1$ for $b \in N$ and by $u_g = \mathcal F(\pi_g) \otimes \lambda_g$ for $g \in G$. For every $g \in G$, we have $Ju_gJ = 1 \otimes \rho_{g}$. For every $T \in \mathbf B(\mathcal F(H) )$, we identify $T$ with $T \otimes 1 \in \mathbf B(\mathcal F(H) \otimes \ell^2(G))$. In particular, for every $T \in  \mathbf B(\mathcal F(H) )$, we have $T \in (J \rL(G) J)' \cap \mathbf B(\rL^2(M)) = \langle M, \rL(G)\rangle$.

For every unit vector $e \in K_\R$, denote by $P_e = \ell(e)\ell(e)^*$ the orthogonal projection $\rL^2(M) \to \mathcal X(\C e)$ where $\mathcal X(\C e)$ is the closure in $\rL^2(M)$ of the linear span of all the elements of the form $e \otimes e_1 \cdots \otimes e_k \otimes \delta_h$ where $k \geq 0$, $e_1, \dots , e_k \in K_\R +{\rm i} K_\R$, $h \in G$.

\begin{claim}\label{claim-vanishing}
For every unit vector $e \in K_\R$, we have $\Phi(P_e) = 0$.
\end{claim}

Indeed, fix an integer $N \geq 1$. Since $\pi$ is weakly mixing, we may choose inductively elements $g_1,\dots, g_N \in G$ such that $|\langle \pi_{g_i}(e) , \pi_{g_j}(e)\rangle | \leq 1/N$ for all $1\leq i<j\leq N$.  Claim \ref{claim-epsilon} implies that $\mathcal X( \C\pi_{g_i}(e)) \perp_{1/N} \mathcal X( \C \pi_{g_j}(e))$ and so $\|P_{\pi_{g_i}(e)} P_{\pi_{g_j}(e)}\|_\infty \leq 1/N$ for all $1\leq i<j\leq N$. Put $\phi = \varphi \circ \Phi \in \langle M, \rL(G) \rangle^*$. Since $u_{g_i} \in \rL(G) \subset M_\varphi$ for all $1 \leq i \leq N$, we have 
$$\phi(P_e) = \frac1N \sum_{i = 1}^N \phi(u_{g_i}P_{e}u_{g_i}^*) =\frac1N \sum_{i = 1}^N \phi(P_{\pi_{g_i}(e)}) = \frac1N \phi\left(\sum_{i = 1}^N P_{\pi_{g_i}(e)} \right) \leq \frac1N \left \| \sum_{i = 1}^N P_{\pi_{g_i}(e)} \right \|_\phi.$$
Moreover, we have
\begin{align*}
\left \| \sum_{i = 1}^N P_{\pi_{g_i}(e)} \right \|_\phi^2 &= \sum_{1 \leq i , j \leq N} \langle P_{\pi_{g_i}(e)} , P_{\pi_{g_j}(e)} \rangle_\phi \\
&\leq  \sum_{i = 1}^N \|P_{\pi_{g_i}(e)} \|_\phi^2 + \sum_{1 \leq i \neq j \leq N} \|P_{\pi_{g_i}(e)} P_{\pi_{g_j}(e)}\|_\infty \\
& \leq N + \frac{N(N - 1)}{N} = 2N - 1.
\end{align*}
This implies that $\phi(P_e) \leq \frac{\sqrt{2N - 1}}{N}$. Since this is true for every $N \geq 1$, we infer that $\phi(P_e) = 0$ and so $\Phi(P_e) = 0$ since $\varphi$ is faithful.

Let $e \in K_\R$ and $T \in \langle M, \rL(G)\rangle$. Applying Kadison's inequality and Claim \ref{claim-vanishing}, we have
$$\Phi(\ell(e)T)\Phi(\ell(e)T)^* \leq \Phi(\ell(e) TT^* \ell(e)^*) \leq \|T\|^2 \, \Phi(\ell(e) \ell(e)^*) = 0$$
and so $\Phi(\ell(e)T) = 0$. Likewise, we have $\Phi(T \ell(e)^*) = 0$. Using Wick formula, we obtain $\Phi(W(e_1 \otimes \cdots \otimes e_k)) = 0$ for all $k \geq 1$ and all $e_1, \dots, e_k \in K_\R + {\rm i} K_\R$. This further implies that $\Phi (M \ominus \rL(G)) = 0$ and so $P = \rL(G)$.

$(\rm ii)$ Along the lines of the proof of item $(\rm i)$, the proof of \cite[Theorem 5.1 $(2)$]{KV16} for $A = \rL(G)$ and $B = P$ applies {\em mutatis mutandis} since $P \cap \rL(G)$ is tracial. We obtain that $P \subset \rL(G)$.
\end{proof}

\begin{proof}[Proof of Theorem \ref{letterthm-maximal}]
$(\rm i)$ This follows from item $(\rm i)$ in Theorem \ref{thm-maximal}. 

$(\rm ii)$ Let $P \subset M$ be any von Neumann subalgebra with expectation and with property Gamma such that $P \cap \rL(G)$ is diffuse. Since $P \subset M$ is with expectation and has property Gamma, \cite[Theorem 3.1]{HU15b} implies that there exists a faithful state $\psi \in M_\ast$ such that $P$ is globally invariant under $\sigma^\psi$ and there exists a decreasing sequence of diffuse abelian von Neumann subalgebras $A_k \subset P_\psi$ such that $\bigvee_{k \in \N} ((A_k)' \cap P) = P$. Observe that $Q_k = (A_k)' \cap P$ is globally invariant under $\sigma^\psi$ for every $k \in \N$. 

Using Lemma \ref{thm-mixing} $(\rm i)$, we have $P' \cap M \subset (P \cap \rL(G))' \cap M \subset \rL(G)$ and so $P' \cap M = P' \cap \rL(G)$. Using \cite[Lemma 3.3 $(\rm v)$]{HI17}, denote by $z^\perp \in P' \cap \rL(G)$ the largest projection such that $P z^\perp \lessdot_M \rL(G)$. By contradiction, assume that $z^\perp \neq 1$ so that $z = (z^\perp)^\perp \neq 0$. We consider the following two possible situations that will give a contradiction.

Firstly, assume that for every $k \in \N$, we have $A_k z \npreceq_M \rL(G)$. Using Theorem \ref{appendix-thm-3}, we have $(A_kz)' \cap zMz \lessdot_M \rL(G)$ and so $Q_k z \lessdot_M \rL(G)$ since $Q_k z \subset (A_kz)' \cap zMz$ is with expectation. Since $Q_kz$ is globally invariant under $\sigma^\psi$, \cite[Remark 3.3]{Is17} implies that there exists a norm one projection $\Phi_k : z \langle M, \rL(G) \rangle z \to Q_k z$ such that $\Phi_k |_{zMz}$ is the unique $\psi$-preserving conditional expectation. Choose a nonprincipal ultrafilter $\omega \in \beta(\N) \setminus \N$ and define the completely positive map $\Phi_\omega : z \langle M, \rL(G) \rangle z \to P z$. Since $\Phi_\omega |_{zMz}$ is $\psi$-preserving, it follows that $\Phi_\omega$ is indeed a norm one projection such that $\Phi_\omega |_{zMz}$ is normal. Therefore, $Pz \lessdot_M \rL(G)$ and so $P = Pz^\perp \oplus Pz \lessdot_M \rL(G)$ by \cite[Lemma 3.3 $(\rm v)$]{HI17}. This contradicts the definition of $z^\perp$.

Secondly, assume that there exists $k \in \N$ such that $A_k z \preceq_M \rL(G)$. Since $A_k z$ is tracial, there exists $n \geq 1$, a normal $\ast$-homomorphism $\rho : A_k z \to \mathbf M_n(\rL(G))$ and a nonzero partial isometry $v \in \mathbf M_{1, n}(z M)\rho(z)$ such that $a v = v \rho(a)$ for every $a \in A_k z$. Lemma \ref{thm-mixing} $(\rm i)$ implies that $v^*v \in \mathbf M_n(\rL(G))$ and $v^* ((A_kz)' \cap zMz) v \subset (v^{*}v \rho(A_{k}z))' \cap v^*v\mathbf M_n(M)v^*v \subset v^*v\mathbf M_n(\rL(G))v^*v$. For every $\ell \geq k$, since $A_\ell z \subset A_k z$ is a diffuse subalgebra, Lemma \ref{thm-mixing} $(\rm i)$ implies that $v^* ((A_\ell z)' \cap zMz) v \subset (v^{*}v \rho(A_{\ell}z))' \cap v^*v\mathbf M_n(M)v^*v \subset v^*v\mathbf M_n(\rL(G))v^*v$. This shows that $v^* \, z\mathcal Pz \, v \subset v^*v \mathbf M_n(\rL(G)) v^*v$, where $\mathcal P = \bigvee_{k \in \N} ((A_k)' \cap M)$. Since $vv^* \in (A_k z)' \cap zMz\subset z\mathcal Pz$, we have $vv^* \, z \mathcal Pz \, vv^* \preceq_M \rL(G)$ and so $z\mathcal Pz \preceq_M \rL(G)$. Since $Pz \subset z \mathcal P z$ is with expectation, we have $Pz \preceq_M \rL(G)$ by \cite[Lemma 4.8]{HI15}. Using \cite[Proposition 4.10]{HI17}, there exists a nonzero projection $r \in \mathcal Z((Pz)' \cap z M z) \subset z(P' \cap M)z \subset z(P' \cap \rL(G))z$ such that $Pr \lessdot_M \rL(G)$. Therefore, $Pz^\perp \oplus Pr \lessdot_M \rL(G)$ by \cite[Lemma 3.3 $(\rm v)$]{HI17}. This contradicts the definition of $z^\perp$.

Therefore, we have $P \lessdot_M \rL(G)$. Applying item $(\rm ii)$ in Theorem \ref{thm-maximal}, we obtain $P \subset \rL(G)$.
\end{proof}

\begin{proof}[Proof of Application \ref{application-1}]
It follows from \cite[Corollaire 1.5.7]{Co72} that $\rT(M) = G$. Since $G < \R$ is a countable dense subgroup, \cite[Th\'eor\`eme 3.4.1]{Co72} implies that $M$ is a type ${\rm III_0}$ factor. Using Theorem \ref{thm-prime}, $M$ is a prime factor and using Theorem \ref{thm-cartan}, $M$ has no Cartan subalgebra.

Write $M = N \rtimes G$ and denote by $(u_g)_g$ the canonical unitaries in $M$ implementing the action $\sigma^\pi : G \curvearrowright N$. Denote by $\psi \in M_\ast$ the canonical faithful normal state. Observe that for every $g \in G$, we have $\sigma_g^\psi = \Ad(u_g)$. Let $\rL(G) \subset P \subset M$ be any intermediate von Neumann subalgebra. For every $g \in G$, we have $\sigma_g^\psi(P) = u_g P u_g^* = P$. Since $\sigma^\psi : \R \curvearrowright N$ is continuous with respect to the $u$-topology and since $G < \R$ is dense, we infer that $\sigma_t^\psi(P) = P$ for every $t \in \R$ and so $P \subset M$ is with expectation by \cite{Ta71}. The previous reasoning implies that any intermediate amenable subalgebra $\rL(G) \subset P \subset M$ is with expectation and Theorem \ref{letterthm-maximal} $(\rm i)$ implies that $\rL(G) = P$.
\end{proof}

\section{Strong solidity}

\subsection{Proof of Theorem \ref{letterthm-strongly-solid}}

\begin{proof}[Proof of Theorem \ref{letterthm-strongly-solid}]
We denote by $\varphi \in M_\ast$ the canonical faithful state. Assume that $G$ is amenable and $\pi : G \curvearrowright H_\R$ is a faithful mixing orthogonal representation. Without loss of generality, we may assume that $\ker U = \{0\}$ so that $M$ is a type ${\rm III_1}$ factor by Theorem \ref{letterthm-type}. Indeed, we may replace $H_\R$ by $H_\R \oplus (H_\R \otimes \rL^2_\R(\R))$, $U$ by $U \oplus (1_{H_\R} \otimes \lambda_\R)$ and $\pi$ by $\pi \oplus (\pi \otimes 1_{\rL^2_\R(\R)})$ so that 
$$M = \Gamma(U, \pi)\dpr \subset \Gamma(U \oplus (1_{H_\R} \otimes \lambda_\R), \pi \oplus (\pi \otimes 1_{\rL^2_{\R}(\R)}))\dpr = \mathfrak M.$$
Then $M \subset \mathfrak M$ is with expectation, $\pi \oplus (\pi \otimes 1_{\rL^2_{\R}(\R)})$ is mixing and  $\mathfrak M$ is a type ${\rm III_1}$ factor by Theorem \ref{letterthm-type}. Since solidity is preserved under taking diffuse subalgebras with expectation, up to replacing $M$ by $\mathfrak M$, we may assume that $M$ is a type ${\rm III_1}$ factor. By contradiction, assume that $M$ is not solid. Since $M$ is of type ${\rm III}$ and $M$ is not solid, there exists a diffuse abelian subalgebra with expectation $A \subset M$ such that the relative commutant $P = A' \cap M$ has no nonzero amenable direct summand. Theorem \ref{appendix-thm-3} implies that $A \preceq_M \rL(G)$. Then Theorem \ref{thm-mixing} $(\rm iii)$ implies that $P = A' \cap M \preceq_M \rL(G)$. Then \cite[Proposition 4.10]{HI17} implies that $P$ has a nonzero amenable direct summand. This is a contradiction.

We now prove that $M = \Gamma(U, \pi)\dpr$ is strongly solid following the proof of \cite[Main theorem]{BHV15}. We explain below the appropriate changes that are needed. As before, since strong solidity is preserved under taking diffuse subalgebras with expectation, we may assume that $M$ is a type ${\rm III_1}$ factor. By contradiction, assume that $M$ is not strongly solid. Since $M$ is a solid type ${\rm III}$ factor, the exact same reasoning as in the proof of \cite[Main theorem]{BHV15} shows that there exists a diffuse amenable subalgebra with expectation $Q \subset M$ such that $Q' \cap M = \mathcal Z(Q)$ and $P = \mathcal N_M(Q)\dpr$ has no nonzero amenable direct summand. Choose a faithful state $\phi \in M_\ast$ such that $Q \subset M$ is globally invariant under $\sigma^\phi$. Observe that $P$ is also globally invariant under $\sigma^\phi$. Denote by $(N, \psi)$ the unique Araki--Woods factor of type ${\rm III_1}$ endowed with any faithful normal state. We borrow notation from Appendix \ref{appendix-deformation}. Put $\mathcal M = \core_{\varphi \otimes \psi}(M \ovt N)$, $\mathcal B = \core_{\varphi \otimes \psi}(\rL(G) \ovt N)$, $\mathcal Q = \Pi_{\varphi \otimes \psi, \phi \otimes \psi}(\core_{\phi \otimes \psi}(Q \ovt N))$ and $\mathcal P = \Pi_{\varphi \otimes \psi, \phi \otimes \psi}(\core_{\phi \otimes \psi}(P \ovt N))$. By the claim in the proof of \cite[Main theorem]{BHV15}, we have $\mathcal P \subset \mathcal N_{\mathcal M}(\mathcal Q)\dpr$. Since this inclusion is with expectation and since $\mathcal P$ has no nonzero amenable direct summand by \cite[Proposition 2.8]{BHR12}, it follows that $\mathcal N_{\mathcal M}(\mathcal Q)\dpr$ has no nonzero amenable direct summand either.

Since the inclusion $\rL(G) \subset M$ is mixing, since $P = \mathcal N_M(Q)\dpr$ has no nonzero amenable direct summand and since $\rL(G)$ is amenable, a combination of Theorem \ref{thm-mixing} $(\rm iii)$ and \cite[Proposition 4.10]{HI17} implies that $Q \npreceq_M \rL(G)$. Since $Q \npreceq_M \rL(G)$ and since $Q' \cap M = \mathcal Z(Q)$,  \cite[Theorem A]{Is18} implies that $\mathcal Q \npreceq_{\mathcal M} \mathcal B$. Fix a nonzero finite trace projection $q \in \mathcal Q$ and observe that $q\mathcal Qq \npreceq_{\mathcal M} \mathcal B$. Since $M \ovt N$ is a type ${\rm III_1}$ factor, $\mathcal M$ is a type ${\rm II_\infty}$ factor. Since $\rL_{\varphi \otimes \psi}(\R) \subset \mathcal M$ is diffuse and with trace preserving conditional expectation, there exists a unitary $u \in \mathcal U(\mathcal M)$ such that $uqu^* \in \rL_{\varphi \otimes \psi}(\R) \subset \mathcal B$. Up to conjugating $\mathcal Q$ and $\mathcal P$ by $u \in \mathcal U(\mathcal M)$ we may assume that $q \in \mathcal B$ and that $q\mathcal Qq \npreceq_{\mathcal M} \mathcal B$. Working inside the type ${\rm II_1}$ factor $q \mathcal M q$, Theorem \ref{appendix-thm-1} implies that $(\theta_t)_t$ does not converge uniformly to the identity on $\Ball(q \mathcal Q q)$. Combining \cite[Theorem A]{HR10} and \cite[Lemma 4.6 and Theorem 4.9]{AD93}, the type ${\rm II_1}$ factor $q \mathcal M q$ has the complete metric approximation property. Moreover, since $\rL(G) \ovt N$ is amenable, the $q\mathcal M q$-$q\mathcal Mq$-bimodule $\rL^2(q \widetilde{\mathcal M} q) \ominus \rL^2(q\mathcal Mq)$ is weakly contained in the coarse bimodule $\rL^2(q\mathcal Mq) \otimes \rL^2(q\mathcal Mq)$ (see \cite[Section 4]{HR10}). Combining \cite[Proposition 3.7]{BHV15} with \cite[Proposition 2.4 and Corollary 2.5]{PV11}, we obtain that the stable normalizer $s\mathcal N_{q\mathcal Mq}(q \mathcal Qq)\dpr$ has a nonzero amenable direct summand. Since $q \mathcal P q \subset s\mathcal N_{q\mathcal Mq}(q \mathcal Qq)\dpr$, it follows that $\mathcal P$ has a nonzero amenable direct summand and so does $P \ovt N$ by \cite[Proposition 4.8]{BHR12}. This is a contradiction.
\end{proof}

\begin{proof}[Proof of Application \ref{application-2}]
Combining \cite[Theorem A]{HR10} and \cite[Consequence 4.10 (c)]{AD93}, $M$ has the complete metric approximation property. Likewise, combining \cite[Theorem 3.19]{HR10} and \cite[Corollary 5.15]{OT13}, $M$ has the Haagerup property. 

Since $\pi$ is faithful and mixing, Theorem \ref{letterthm-strongly-solid} implies that $M$ is a strongly solid factor. Let $(t, n) \in \R \times \Z$ be any element such that $U_t \otimes \pi_n = 1$. Since $\pi$ is mixing, we necessarily have $\pi_n = 1$. This implies that $n = 0$ and so $U_t = 1$. Thus, Theorem \ref{letterthm-type} shows that $\rT(M) = \ker(U)$.  Let $(t_k, n_k) \in \R \times \Z$ be any sequence such that $U_{t_k} \otimes \pi_{n_k} \to 1$ strongly as $k \to \infty$. Since $\pi$ is mixing, the sequence $(n_k)_k$ is necessarily bounded. This implies that $n_k = 0$ for $k \in \N$ large enough and so $U_{t_k} \to 1$ strongly as $k \to \infty$. Theorem \ref{letterthm-full} implies that $\tau(M) = \tau(U)$.

Denote by $\varphi \in M_\ast$ the canonical faithful state. Put $N = \Gamma(H_\R \otimes K_\R, U \otimes 1_{K_\R})\dpr$ so that $M = N \rtimes \Z$. Observe that $N_\varphi$ is a free group factor by \cite{Sh96} since $U \otimes 1_{K_\R}$ contains $1_\R \otimes 1_{K_\R}$ as a subrepresentation and $\dim(K_\R) = +\infty$. Observe that $M_\varphi = N_\varphi \rtimes \Z$. We claim that $M_\varphi$ is a factor. Indeed, since the action $\Z \curvearrowright N$ is mixing, we have $\mathcal Z(M_\varphi) \subset \rL(\Z)' \cap (N_\varphi \rtimes \Z) \subset \rL(\Z)$ by Theorem \ref{thm-mixing} $(\rm i)$. By contradiction, if $\mathcal Z(M_\varphi) \neq \C 1$, there exists $z \in \mathcal Z(M_\varphi) \subset \rL(\Z)$ such that $z \notin \C 1$. If we write $z = \sum_{n \in \Z} z_n u_n \in \rL(\Z)$ for its Fourier decomposition, there exists $n \in \Z \setminus \{0\}$ for which $z_n \in \C \setminus \{0\}$.  Let $x \in \mathcal U(N_\varphi)$ be any unitary such that $\varphi(x) = 0$. Since $x z = z x$, we have $ \sigma^\pi_n (x) = x$. For every $k \in \N$, we have $ \sigma^\pi_{k n} (x) = x$ and so $\varphi(\sigma^\pi_{k n}(x)x^*) = 1$. Since $\Z \curvearrowright N$ is mixing, we have $\lim_k \varphi(\sigma^\pi_{k n}(x)x^*) = \varphi(x)\varphi(x^*) = 0$. This is a contradiction and so $\mathcal Z(M_\varphi) = \C 1$. Thus, $M_\varphi$ is a nonamenable type ${\rm II_1}$ factor.

Finally, we show that $M$ is not isomorphic to any free Araki--Woods factor. By contradiction, assume that there exists a strongly continuous orthogonal representation $V : \R \curvearrowright L_\R$ such that $\Gamma (U \otimes 1_{K_\R}, 1_{H_\R} \otimes \pi)\dpr = M = \Gamma(L_\R, V)\dpr$. Denote by $\psi \in M_\ast$ the free quasi-free state state on $M = \Gamma(L_\R, V)\dpr$. Since $M_\varphi$ is nonamenable, \cite[Theorem 5.1]{HSV16} implies that there exists a nonzero partial isometry $v \in M$ such that $p = v^*v \in M_\varphi$, $q = vv^* \in M_\psi$ and $\Ad(v) : (pMp, \varphi_p) \to (qMq, \psi_q)$ is a state preserving isomorphism (here we simply denote $\varphi_p = \frac{\varphi(p \, \cdot \, p)}{\varphi(p)}$ and $\psi_q = \frac{\psi(q \, \cdot \, q)}{\psi(q)}$). In particular, we obtain 
$$p \, M_\varphi \, p = (pMp)_{\varphi_p} \cong (qMq)_{\psi_q} = q \,M_\psi \, q.$$ 
It follows that $q \, M_\psi \, q$ is nonamenable and so $M_\psi$ is a free group factor by \cite{Sh96}. This implies that $M_\varphi$ is an interpolated free group factor \cite{Dy92, Ra92}. By assumption, the $\rL(\Z)$-$\rL(\Z)$-bimodule $\rL^2(M \ominus \rL(\Z))$ is disjoint from the coarse $\rL(\Z)$-$\rL(\Z)$-bimodule $\ell^2(\Z \times \Z)$ (see \cite[Section 4]{HS09} for further details). This implies in particular that the $\rL(\Z)$-$\rL(\Z)$-bimodule $\rL^2(M_\varphi \ominus \rL(\Z))$ is also disjoint from the coarse $\rL(\Z)$-$\rL(\Z)$-bimodule $\ell^2(\Z \times \Z)$. Since $M_\varphi$ is an interpolated free group factor, this contradicts \cite[Corollary 7.6]{Vo95} and \cite[Proposition 9.2]{Sh97b}.
\end{proof}

\subsection{Semisolidity and absence of Cartan subalgebra}

Following \cite{Oz04, HI15}, we say that a $\sigma$-finite von Neumann algebra $M$ is {\em semisolid} if for any von Neumann subalgebra with expectation $Q \subset M$ that has no nonzero type ${\rm I}$ direct summand, the relative commutant $Q' \cap M$ is amenable. Any nonamenable semisolid factor $M$ is {\em prime}, that is, $M$ does not split as the tensor product of two diffuse factors. When the countable group $G$ is {\em virtually abelian}, we prove that the factor $\Gamma(U, \pi)\dpr$ is {\em semisolid} for any faithful orthogonal representation $\pi : G \curvearrowright H_\R$ such that $[U, \pi] = 0$. 

\begin{thm}\label{thm-prime}
Let $U : \R \curvearrowright H_\R$ be any strongly continuous orthogonal representation with $\dim H_\R \geq 2$. Let $G$ be any virtually abelian countable group and $\pi : G \curvearrowright H_\R$ any faithful orthogonal representation such that $[U, \pi] = 0$.

Then $\Gamma(U, \pi)\dpr$ is a semisolid factor. In particular, $\Gamma(U, \pi)\dpr$ is a prime factor.
\end{thm}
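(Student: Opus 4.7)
My plan is to establish the semisolidity of $M = \Gamma(U, \pi)\dpr = N \rtimes G$ by a Popa deformation/rigidity argument exploiting the fact that $\rL(G)$ is of type ${\rm I}$. Indeed, since $G$ is virtually abelian, Thoma's theorem guarantees that every irreducible unitary representation of $G$ is finite-dimensional, so $\rL(G)$ is a type ${\rm I}$ (hence amenable) von Neumann algebra. Suppose for contradiction that $Q \subset M$ is a von Neumann subalgebra with expectation having no type ${\rm I}$ direct summand, and that $P = Q' \cap M$ is not amenable; after cutting by a suitable central projection of $P$, I may further assume that $P$ has no amenable direct summand.

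The next step is to set up the $s$-malleable deformation of $M$ by doubling. Let $\tilde N = \Gamma(H_\R \oplus H_\R, U \oplus U)\dpr$ and extend $\pi$ diagonally to $\tilde \pi : G \curvearrowright H_\R \oplus H_\R$, so that $\tilde M = \tilde N \rtimes G$ contains $M$ with state-preserving expectation. Following \cite{HR10, HR14, BHV15}, this produces a state-preserving $s$-malleable deformation $(\theta_t, \alpha)$ of $\tilde M$ with $\alpha|_M = \id_M$, $\alpha \theta_t \alpha = \theta_{-t}$, and $\theta_t \to \id$ pointwise. The crucial bimodule estimate, stemming from the free-product structure of $N \subset \tilde N$ and the crossed-product construction, is that the $M$-$M$-bimodule $\rL^2(\tilde M) \ominus \rL^2(M)$ is weakly contained in $\rL^2(M) \otimes_{\rL(G)} \rL^2(M)$, which is amenable since $\rL(G)$ is.

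Popa's intertwining dichotomy then produces two cases. In the first case, $(\theta_t)$ converges uniformly on $\Ball(Q)$ in $\|\cdot\|_\varphi$, and the classical transfer-of-intertwiner argument yields $Q \preceq_M \rL(G)$. I will argue that this is impossible: combining Lemma \ref{lem-intertwining} with a central-support argument, the intertwining produces a non-zero $\ast$-homomorphism with expectation from a corner of $Q$ into $\rL(G)$; since a von Neumann subalgebra with expectation of a type ${\rm I}$ algebra is itself type ${\rm I}$, modding out by the kernel (a weakly closed ideal of the corner, hence cut out by a central projection) yields a non-zero type ${\rm I}$ direct summand of $Q$ via the central cover in $Q$ of the surviving projection, contradicting the hypothesis on $Q$. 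In the second case, $(\theta_t)$ does not converge uniformly on $\Ball(Q)$; combining the flip $\alpha$ with the above bimodule estimate and the normalizer machinery of \cite[Theorem 3.7]{BHV15} together with \cite[Theorem A]{Is18} (as used in the proof of Theorem \ref{letterthm-strongly-solid}), I obtain a non-zero amenable direct summand of $\mathcal N_M(Q)\dpr \supset P$, contradicting the assumption on $P$. Both branches yield contradictions, so $M$ is semisolid. Primeness is then immediate, since any non-trivial tensor splitting $M = M_1 \ovt M_2$ with both factors diffuse would exhibit subalgebras without type ${\rm I}$ direct summand whose relative commutants are non-amenable.

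The main obstacle will be the rigorous verification of the bimodule estimate $\rL^2(\tilde M) \ominus \rL^2(M) \prec \rL^2(M) \otimes_{\rL(G)} \rL^2(M)$ in the type ${\rm III}$, non-mixing regime: the free-probabilistic decomposition of $\tilde N$ over $N$ must be carefully combined with the crossed-product structure while tracking the modular data, without any recourse to mixingness of $\pi$. A secondary difficulty lies in making precise the central-support argument that rules out the first branch of the dichotomy, in particular handling $\ast$-homomorphisms with expectation into type ${\rm I}$ algebras in the non-tracial setting.
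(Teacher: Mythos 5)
Your overall strategy---the doubling deformation over $\rL(G)$, the dichotomy, and the observation that $\rL(G)$ is of type ${\rm I}$ because $G$ is virtually abelian---is the right family of ideas, and your first branch (uniform convergence of $(\theta_t)_t$ on $\Ball(Q)$ forces $Q \preceq_M \rL(G)$, contradicting the absence of a type ${\rm I}$ direct summand) matches the paper in spirit. The genuine gap is in your second branch. The machinery you invoke there (\cite[Proposition 3.7]{BHV15} combined with \cite{PV11} and \cite[Theorem A]{Is18}, as in the proof of Theorem \ref{letterthm-strongly-solid}) rests on weak compactness of the action of $\mathcal N_M(Q)$ on $Q$, which is extracted from the complete metric approximation property only when $Q$ is \emph{amenable}; moreover the use of \cite[Theorem A]{Is18} in that proof also needs $Q' \cap M = \mathcal Z(Q)$. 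Here $Q$ is an arbitrary subalgebra with no type ${\rm I}$ direct summand---typically nonamenable---so none of this applies, and the step ``non-uniform convergence on $\Ball(Q)$ implies an amenable direct summand of $\mathcal N_M(Q)\dpr$'' is unsupported. Even granting such a summand, it would not directly contradict the nonamenability of $P = Q' \cap M$: the cutting projection lies in the center of the normalizer, not of $P$, and in the type ${\rm III}$ setting amenability does not pass to subalgebras without expectation.

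What actually closes the argument is Popa's spectral gap rigidity run on the \emph{relative commutant} rather than on $Q$: since (after cutting) $P$ has no amenable direct summand and amenability relative to the amenable algebra $\rL(G)$ is the same as amenability, the spectral gap theorem (Theorem \ref{appendix-thm-2}) forces the deformation to converge uniformly on $\Ball(P' \cap M) \supset \Ball(Q)$, so your second branch cannot occur at all; equivalently, failure of uniform convergence on $\Ball(Q)$ produces an amenable corner of $P$ itself, with no CMAP, no weak containment in the coarse bimodule, and no normalizer control needed. This is exactly how the paper proceeds: it extracts a diffuse abelian subalgebra with expectation $A \subset Q$ with $A \npreceq_M \rL(G)$ (Lemma \ref{lem-intertwining}, using that $\rL(G)$ is type ${\rm I}$), and then Theorem \ref{appendix-thm-3}---spectral gap plus the intertwining transfer, carried out on the continuous core precisely to sidestep the modular issues you defer to the end of your sketch---gives $A' \cap M \lessdot_M \rL(G)$, hence $A' \cap M$ is amenable, hence $Q' \cap M$ is amenable since it sits inside $A' \cap M$ with expectation. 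So your branch 2 should be replaced by this spectral gap argument, and the deformation estimates should be performed on the semifinite core as in Appendix \ref{appendix-deformation} rather than directly on the type ${\rm III}$ algebra.
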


\begin{proof}
Put $M = \Gamma(U, \pi)\dpr$ and denote by $\varphi \in M_\ast$ the canonical faithful state. Since $\pi$ is faithful and $\dim H_\R \geq 2$, $M$ is a nonamenable factor by Theorem \ref{letterthm-type}. Let $Q \subset M$ be any von Neumann subalgebra with expectation such that $Q$ has no nonzero type ${\rm I}$ direct summand. Since $\rL(G)$ is of type ${\rm I}$, \cite[Lemma 2.6]{HV12} (or Lemma \ref{lem-intertwining}) implies that there exists a diffuse abelian subalgebra with expectation $A \subset Q$ such that $A \npreceq_M \rL(G)$. Theorem \ref{appendix-thm-3} implies that $A' \cap M$ is amenable. Since $Q' \cap M \subset A' \cap M$ is with expectation, it follows that $Q' \cap M$ is amenable. This shows that $M$ is semisolid. 
\end{proof}

When the countable group $G$ is abelian and $\pi$ is weakly mixing, we moreover prove that $\Gamma(U, \pi)\dpr$  has no Cartan subalgebra.

\begin{thm}\label{thm-cartan}
Let $U : \R \curvearrowright H_\R$ be any strongly continuous orthogonal representation with $\dim H_\R \geq 2$. Let $G$ be any abelian countable group and $\pi : G \curvearrowright H_\R$ any faithful weakly mixing orthogonal representation such that $[U, \pi] = 0$.

Then $\Gamma(U, \pi)\dpr$ has no Cartan subalgebra.
\end{thm}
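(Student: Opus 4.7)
The plan is to argue by contradiction. Suppose $A \subset M$ is a Cartan subalgebra, so that $A$ is a diffuse amenable MASA with expectation and $\mathcal N_M(A)\dpr = M$. Since $G$ is abelian, $\rL(G) \subset M$ is an abelian subalgebra with expectation, and weak mixing of $\pi$ gives $\rL(G)' \cap M = \rL(G)$, so $\rL(G)$ is itself a MASA of $M$. I would then invoke the normalizer-enhanced form of the deformation/rigidity dichotomy underlying the appendix material (an extension of Theorem~\ref{appendix-thm-3} in the spirit of \cite{PV11}, as in the proof of Theorem~\ref{letterthm-strongly-solid}) applied to the diffuse amenable subalgebra with expectation $A$: either $A \preceq_M \rL(G)$, or $\mathcal N_M(A)\dpr$ is amenable relative to $\rL(G)$ inside $M$.

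In the relative amenability alternative, $M = \mathcal N_M(A)\dpr$ is itself amenable relative to $\rL(G)$. Since $\pi$ is weakly mixing, Theorem~\ref{letterthm-maximal}$(\mathrm{i})$ applied to the intermediate subalgebra $\rL(G) \subset M \subset M$ (with the identity expectation on $M$) forces $M = \rL(G)$. This is absurd since $N = \Gamma(H_\R, U)\dpr$ is a nontrivial factor contained with expectation in $M$.

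In the intertwining alternative $A \preceq_M \rL(G)$, since both $A$ and $\rL(G)$ are MASAs of $M$ with expectation, Popa's MASA conjugacy theorem (in its type~III form, e.g.\ via the techniques of \cite{HI15}) produces nonzero projections $p \in A$, $q \in \rL(G)$ and a unitary $u \in \mathcal U(M)$ with $u(pAp)u^{\ast} = q\rL(G)q$ and $upu^\ast = q$. The Cartan property of $A$ in $M$ passes to the corner $pAp \subset pMp$ and is preserved under the isomorphism $\mathrm{Ad}(u) : pMp \to qMq$, so $q\rL(G)q$ would be a Cartan subalgebra of $qMq$. A contradiction would then follow by showing that $\rL(G)$ admits no Cartan corner in $M$: using the Fourier decomposition of $M = N \rtimes G$ together with weak mixing of $\sigma^\pi : G \curvearrowright N$, which rules out nontrivial $\sigma^\pi$-character-eigenoperators in $N$, one shows that $\mathcal N_{qMq}(q\rL(G)q)\dpr$ is strictly contained in $qMq$.

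The main obstacle is the last step: rigorously establishing that $\rL(G)$ admits no Cartan corner in $M$. I expect this Fourier-analytic argument to follow from the appendix mixing results (Theorem~\ref{thm-mixing}) combined with the weak mixing of $\pi$; in particular, weak mixing forces any element of $N$ normalizing $\rL(G)$ in $M$ to be scalar, and a cutting-down argument transfers this restriction to corners.
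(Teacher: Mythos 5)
Your strategy is the same as the paper's: run the dichotomy ``$A \preceq_M \rL(G)$ or the normalizer is (relatively) amenable'' coming from the core deformation/rigidity argument of Theorem \ref{letterthm-strongly-solid}, dispose of the second branch using nonamenability of $M$ (your detour through Theorem \ref{letterthm-maximal}$(\rm i)$ with $P=M$ works just as well), and in the intertwining branch conjugate a corner of $A$ onto a corner of $\rL(G)$ and contradict singularity of $\rL(G)$. This is essentially the published proof, so the approach is sound.

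The step you flag as the ``main obstacle'' is not actually an obstacle, but your sketch of it is slightly off target. What you need is not that elements of $N$ normalizing $\rL(G)$ are scalar (normalizing or quasi-normalizing elements of $\rL(G)$ need not lie in $N$, so this would not suffice); what you need is quasi-normalizer control inside all of $M$, and this is exactly Theorem \ref{thm-mixing}$(\rm i)$: since $\sigma^\pi$ is $\varphi$-weakly mixing, $\mathcal{Q}\mathcal{N}_M(\rL(G))\dpr = \rL(G)$, i.e.\ $\rL(G)$ is a singular masa. The passage to corners is then the standard fact that for a masa $B\subset M$ and a projection $q\in B$ one has $\mathcal N_{qMq}(Bq)\dpr = q\,\mathcal N_M(B)\dpr\,q$ (\cite[Lemma 3.5]{Po03}, see also \cite[Proposition 2.3]{HU15b}), so no separate ``cutting-down'' argument is needed. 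Two further small corrections: the masa intertwining theorem in this setting (\cite[Theorem 2.5]{HV12}) produces a partial isometry $w$ with $w^*w\in A$, $ww^*=q\in\rL(G)$ and $wAw^*=\rL(G)q$, not a unitary of $M$ (in the possible type ${\rm II_1}$ case $U=1_{H_\R}$ you cannot always match the projections by a unitary), but the spatial isomorphism $\Ad(w): pMp \to qMq$ is all you need; and rather than arguing that $q\rL(G)q$ cannot be Cartan, it is cleaner to compute directly, as the paper does, that $qMq = w(\mathcal N_M(A)\dpr)w^* = \mathcal N_{qMq}(\rL(G)q)\dpr = \rL(G)q$, which makes $qMq$ abelian and hence $M$ amenable, the desired contradiction.
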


\begin{proof}
Put $M = \Gamma(U, \pi)\dpr = N \rtimes G$ and denote by $\varphi \in M_\ast$ the canonical faithful state. Since $\pi$ is faithful and $\dim H_\R \geq 2$, $M$ is a nonamenable factor by Theorem \ref{letterthm-type}. By contradiction, assume that there exists a Cartan subalgebra $A \subset M$. If $A \npreceq_M \rL(G)$ and since $A \subset M$ is maximal abelian, we can argue exactly as in the last paragraph of the proof of Theorem \ref{letterthm-strongly-solid} and we obtain that $M = \mathcal N_M(A)\dpr$ is amenable. This is a contradiction. Therefore, we must have $A \preceq_M \rL(G)$. Since the orthogonal representation $\pi : G \curvearrowright H_\R$ is weakly mixing, the action $\sigma^\pi : G \curvearrowright (N, \varphi)$ is $\varphi$-weakly mixing. Theorem \ref{thm-mixing} $(\rm i)$ implies that $\mathcal{Q}\mathcal{N}_M(\rL(G))\dpr = \rL(G)$. In particular, $\rL(G) \subset M$ is maximal abelian and singular. Since $A \subset M$ and $\rL(G) \subset M$ are both masas with expectation, \cite[Theorem 2.5]{HV12} implies that there exists a partial isometry $w \in M$ such that $p = w^*w \in A$, $q = ww^* \in \rL(G)$ and $wAw^* = \rL(G)q$. By spatiality and using \cite[lemma 3.5]{Po03} (see also \cite[Proposition 2.3]{HU15b}), since $\mathcal N_M(A)\dpr = M$ and since $\mathcal N_M(\rL(G))\dpr = \rL(G)$, we obtain
$$qMq = w (\mathcal N_M(A)\dpr )w^*= \mathcal N_{qMq}(wAw^*)\dpr = \mathcal N_{qMq}(\rL(G)q)\dpr = q(\mathcal N_M(\rL(G))\dpr )q = \rL(G)q.$$
Thus, $qMq = \rL(G)q$ is amenable and so is $M$. This is a contradiction.
\end{proof}

\appendix

\section{Mixing inclusions}\label{appendix-mixing}

Let $(M, \varphi)$ be any $\sigma$-finite von Neumann algebra endowed with any faithful normal state. Let $B \subset M_\varphi$ be any von Neumann subalgebra. Denote by $\rE_B : M \to B$ the unique $\varphi$-preserving conditional expectation and  put $M \ominus B = \ker(\rE_B)$. Following \cite[Section 3]{Po03}, we say that the inclusion $B \subset M$ is 
\begin{itemize}
\item $\varphi$-{\em mixing} if for any uniformly bounded net $(b_k)_k$ in $B$ that converges weakly to $0$ as $k \to \infty$, we have
\begin{equation*}
\forall x, y \in M \ominus B, \quad \lim_k \|\rE_B(x^* b_k y)\|_\varphi = 0.
\end{equation*}

\item $\varphi$-{\em weakly mixing} if there exists a net of unitaries $(u_k)_k$ in $B$ such that
\begin{equation*}
\forall x, y \in M \ominus B, \quad \lim_k \|\rE_B(x^* u_k y)\|_\varphi = 0.
\end{equation*}
\end{itemize}

We recall the following important examples of $\varphi$-mixing (resp.\ $\varphi$-weakly mixing) inclusions (see \cite[Section 3]{Po03}). Let $G$ be any countable group, $(N, \varphi)$ any $\sigma$-finite von Neumann algebra endowed with any faithful normal state and $\sigma : G \curvearrowright (N, \varphi)$ any $\varphi$-preserving action. Put $M = N \rtimes G$. Denote by $\rE_N : M \to N$ the canonical faithful normal conditional expectation and still denote by $\varphi$ the faithful state $\varphi \circ \rE_N \in M_\ast$. Then $\rL(G) \subset M_\varphi$. We say that the action $\sigma : G \curvearrowright (N, \varphi)$ is 
\begin{itemize}
\item $\varphi$-{\em mixing} if we have
\begin{equation*}
\forall a, b \in N, \quad \lim_{g \to \infty} \varphi(\sigma_{g}(x)y) = \varphi(x) \varphi(y).
\end{equation*}

\item $\varphi$-{\em weakly mixing} if there exists a net $(g_k)_k$ in $G$ such that 
\begin{equation*}
\forall a, b \in N, \quad \lim_k \varphi(\sigma_{g_k}(x)y) = \varphi(x) \varphi(y).
\end{equation*}
\end{itemize}
By \cite[Section 3]{Po03}, if the action $\sigma : G \curvearrowright (N, \varphi)$ is $\varphi$-mixing (resp.\ $\varphi$-weakly mixing), then the inclusion $\rL(G) \subset M$ is $\varphi$-mixing (resp.\ $\varphi$-weakly mixing).

Following \cite[Section 3]{BHV15}, for any inclusion with expectation $Q \subset M$, we define the {\em stable normalizer} of $Q$ inside $M$ as the von Neumann subalgebra generated by the set
$$s\mathcal N_M(Q) = \left \{ x \in M \mid x^*Qx \subset Q \text{ and } xQx^* \subset Q \right \}.$$
Likewise, following \cite[Section 1]{Po01}, we define the {\em quasi normalizer} of $Q$ inside $M$ as the von Neumann subalgebra generated by the set
$$\mathcal{Q}\mathcal{N}_M(Q) = \left \{ x \in M \mid \exists x_1, \dots, x_k,  \; xQ \subset \sum_{j = 1}^k Qx_j \text{ and } Qx \subset \sum_{j = 1}^k x_j Q \right \}.$$
We have the following inclusions $Q \subset \mathcal N_M(Q)\dpr \subset s\mathcal N_M(Q)\dpr \subset \mathcal{Q}\mathcal{N}_M(Q)\dpr \subset M$ and they are all with expectation.

We prove technical properties of (weakly) mixing inclusions that generalize the main results of \cite[Section 3]{Po03} (see also \cite[Theorem D.4]{Va06} and \cite[Lemma 9.4]{Io12}). We should point out that compared to \cite[Section 3]{Po03}, the faithful normal state $\varphi$ is no longer assumed to be almost periodic and can be arbitrary.

\begin{lem}[{\cite[Lemma 2.3]{Po81}}]\label{lem-popa}
Let $(M, \varphi)$ be any $\sigma$-finite von Neumann algebra endowed with any faithful normal state. Let $Q \subset N \subset M_\varphi$ be any von Neumann subalgebras. Denote by $\rE_N : M \to N$ the unique $\varphi$-preserving conditional expectation and put $M \ominus N = \ker(\rE_N)$. Assume that $Q' \cap M = Q' \cap N$.

For every $x \in M \ominus N$, there exists $u \in \mathcal U(Q)$ such that $\|u x u^* - x\|_\varphi \geq \|x\|_\varphi$.
\end{lem}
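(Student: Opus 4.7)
The strategy is the classical Dixmier-type averaging argument adapted to the almost-tracial setting afforded by the hypothesis $Q \subset M_\varphi$. First I would observe that, since $Q \subset M_\varphi$, the modular automorphism group fixes every $u \in \mathcal U(Q)$, so the trace-like identity $\varphi(uyu^*) = \varphi(y)$ holds for every $y \in M$. In particular, $\|uxu^*\|_\varphi = \|x\|_\varphi$ for every $u \in \mathcal U(Q)$, and expanding the squared norm gives
\begin{equation*}
\|uxu^* - x\|_\varphi^2 = 2\|x\|_\varphi^2 - 2\real \langle uxu^*, x\rangle_\varphi.
\end{equation*}
Thus the desired conclusion $\|uxu^* - x\|_\varphi \geq \|x\|_\varphi$ for some $u \in \mathcal U(Q)$ is equivalent to the existence of some $u \in \mathcal U(Q)$ with $\real \langle uxu^*, x\rangle_\varphi \leq \frac{1}{2}\|x\|_\varphi^2$.

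Next I would introduce the set
\begin{equation*}
K = \overline{\conv}^{\|\cdot\|_\varphi}\left\{ uxu^* \mid u \in \mathcal U(Q)\right\} \subset \rL^2(M),
\end{equation*}
noting that $K$ is a closed convex subset of the ball of radius $\|x\|_\varphi$ in $\rL^2(M)$ and is invariant under the isometric action $y \mapsto uyu^*$ of $\mathcal U(Q)$. Let $y_0 \in K$ be the unique element of minimal $\|\cdot\|_\varphi$-norm. By uniqueness, $uy_0u^* = y_0$ for every $u \in \mathcal U(Q)$. Since $K$ lies in the $\|x\|_\infty$-ball of $M$ (every convex combination of $uxu^*$ is norm-bounded by $\|x\|_\infty$, and this ball is $\|\cdot\|_\varphi$-closed in $\rL^2(M)$), we actually have $y_0 \in M$, and therefore $y_0 \in Q' \cap M$.

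Now I would use the two hypotheses. By assumption $Q' \cap M = Q' \cap N$, so $y_0 \in N$. On the other hand, $\rE_N$ is normal and $\|\cdot\|_\varphi$-contractive, and for every $u \in \mathcal U(Q) \subset N$ we have $\rE_N(uxu^*) = u\rE_N(x)u^* = 0$ because $x \in M \ominus N$. It follows that $\rE_N$ vanishes on the convex hull and hence on $K$, so $\rE_N(y_0) = 0$. Combined with $y_0 \in N$, this forces $y_0 = 0$.

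Finally I would conclude by contradiction. Suppose that $\|uxu^* - x\|_\varphi < \|x\|_\varphi$ for every $u \in \mathcal U(Q)$; equivalently, $\real \langle uxu^*, x\rangle_\varphi > \frac{1}{2}\|x\|_\varphi^2$ for every such $u$. Taking convex combinations and passing to the $\|\cdot\|_\varphi$-closure preserves the weak inequality $\real \langle y, x\rangle_\varphi \geq \frac{1}{2}\|x\|_\varphi^2$ for every $y \in K$. Applying this to $y = y_0 = 0$ yields $0 \geq \frac{1}{2}\|x\|_\varphi^2$, so $x = 0$, in which case the conclusion $\|uxu^* - x\|_\varphi \geq \|x\|_\varphi$ is trivial for any $u$. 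The only subtlety I anticipate is verifying that the minimizer $y_0$ genuinely lies in $M$ rather than merely in $\rL^2(M)$, but this follows at once from the norm bound $\|K\|_\infty \leq \|x\|_\infty$ and the fact that the $\|x\|_\infty$-ball of $M$ is a $\|\cdot\|_\varphi$-closed subset of $\rL^2(M)$.
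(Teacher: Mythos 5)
Your argument is correct and is essentially the paper's proof, i.e.\ Popa's averaging trick: take the minimal $\|\cdot\|_\varphi$-norm element $y_0$ of the $\mathcal U(Q)$-invariant closed convex hull of $\{uxu^* \mid u \in \mathcal U(Q)\}$, note $y_0 \in Q' \cap M = Q' \cap N$ while $\rE_N$ kills the hull, so $y_0 = 0$, and conclude by contradiction. The only cosmetic difference is that you take the $\|\cdot\|_\varphi$-closed hull in $\rL^2(M)$ and recover $y_0 \in M$ from the $\|\cdot\|_\varphi$-closedness of the $\|x\|_\infty$-ball, whereas the paper takes the weak closure of the hull inside $M$ directly; both hinge on the same compactness fact.
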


\begin{proof}
The proof is exactly the same as the one of \cite[Lemma 2.3]{Po81} by averaging over $\mathcal U(Q)$ and exploiting the condition $Q' \cap M \subset N$. We give the details for the  reader's convenience. Let $x \in M \ominus N$ and define $\mathcal K_x \subset M$ as the weak closure in $M$ of the convex hull of the set $\{ux u^* \mid u \in \mathcal U(Q)\}$. Since $Q \subset M_\varphi$, the unique element $y \in \mathcal K$ of minimal $\|\cdot\|_\varphi$-norm satisfies $y \in Q' \cap M$ and so $y \in N$. Since $\rE_N (\mathcal K_x) = \{0\}$, we obtain $y = \rE_N(y) = 0$. 

By contradiction, assume that for every $u \in \mathcal U(Q)$, we have $\|u x u^* - x\|_\varphi < \|x\|_\varphi$. Then $x \neq 0$ and for every $u \in \mathcal U(Q)$, we have $\|x\|_\varphi^2 < 2 \Re (\varphi(x^* \, uxu^*))$. Taking weak limits of convex combinations of elements of the form $u x u^*$ for $u \in \mathcal U(Q)$, we obtain $\|x\|_\varphi^2 \leq 2 \Re(\varphi(x^*y)) = 0$. This is a contradiction.
\end{proof}

\begin{lem}\label{lem-mixing}
Let $(M, \varphi)$ be any $\sigma$-finite von Neumann algebra endowed with any faithful normal state. Let $B \subset M_\varphi$ be any von Neumann subalgebra. Let $1_P \in B$ be any nonzero projection and $P \subset 1_PB1_P$ any von Neumann subalgebra for which there exists a net of unitaries $(u_k)_k$ such that 
\begin{equation}\label{eq-mixing}
\forall x, y \in M \ominus B, \quad \lim_k \|\rE_B(x^* u_k y)\|_\varphi = 0.
\end{equation}
Let $\mathcal K \subset 1_P\rL^2(M)$ be any $P$-$B$-subbimodule that is of finite trace as a right $B$-module. Then $\mathcal K \subset 1_P\rL^2(B)$.
\end{lem}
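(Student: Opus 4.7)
The plan is to reformulate the conclusion in terms of the Jones projection onto $\mathcal K$ inside the basic construction $\langle M,e_B\rangle$ and then derive a contradiction using a Parseval identity coming from the unitarity of each $u_k$ combined with the weakly mixing hypothesis. Let $e_{\mathcal K}\in\mathbf B(\rL^2(M))$ denote the orthogonal projection onto $\mathcal K$. Since $\mathcal K$ is a right $B$-submodule of $\rL^2(M)$, $e_{\mathcal K}$ lies in $(JBJ)'=\langle M,e_B\rangle$; since $\mathcal K\subset 1_P\rL^2(M)$ is left $P$-invariant, $e_{\mathcal K}\le 1_P$ and $u_ke_{\mathcal K}=e_{\mathcal K}u_k$ for every unitary $u_k$ from the net (note $u_k\in B$ automatically commutes with $e_B$). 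The finite-right-trace hypothesis says $\Tr_{\langle M,e_B\rangle}(e_{\mathcal K})<\infty$, and the desired conclusion $\mathcal K\subset 1_P\rL^2(B)$ is equivalent to $(1-e_B)e_{\mathcal K}=0$. It therefore suffices to prove that $\mathcal K':=(1-e_B)\mathcal K$, which inherits the structure of a $P$-$B$-subbimodule of $1_P\rL^2(M\ominus B)$ of finite right $B$-trace, is zero.

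Assume for contradiction that $\mathcal K'\ne 0$. Because $B\subset M_\varphi$ is a finite von Neumann algebra with faithful normal tracial state $\varphi|_B$, standard structure theory for finite-rank right Hilbert $B$-modules provides an orthogonal decomposition
\[
\mathcal K' = \bigoplus_{i=1}^n \xi_i\cdot\overline{B}, \qquad \langle\xi_j,\xi_i\rangle_B = \delta_{ij}\,q_i,
\]
where the $\xi_i\in\mathcal K'\subset\rL^2(M\ominus B)$ are right-$B$-bounded vectors and $q_i\in B$ are nonzero projections. For each $u_k$ from the weakly mixing net, $u_k\xi_i\in\mathcal K'$, so expanding and applying Parseval together with $\|u_k\xi_i\|_2 = \|\xi_i\|_2$ yields
\[
u_k\xi_i = \sum_{j=1}^n \xi_j\cdot b_{ji}^{(k)},\qquad \sum_{j=1}^n \|b_{ji}^{(k)}\|_\varphi^2 = \varphi(q_i)\quad\text{for all }k,
\]
where $b_{ji}^{(k)}\xi_\varphi = L_{\xi_j}^*(u_k\xi_i)\in q_j\rL^2(B)$ and $L_{\xi_j}:\rL^2(B)\to\rL^2(M)$ denotes the bounded right-$B$-linear operator associated with $\xi_j$.

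The heart of the argument is then to show that $\|b_{ji}^{(k)}\|_\varphi\to 0$ as $k\to\infty$ for every fixed pair $(j,i)$; since the Parseval sum has only finitely many terms, this forces $\varphi(q_i)=0$, hence $\xi_i=0$, contradicting $\mathcal K'\ne 0$. To establish the limit, approximate $\xi_i,\xi_j$ in $\rL^2$-norm by bounded elements $y_i\xi_\varphi,y_j\xi_\varphi$ with $y_i,y_j\in M\ominus B$. The error from replacing $\xi_i$ by $y_i\xi_\varphi$ inside $L_{\xi_j}^*(u_k\,\cdot\,)$ is at most $\|L_{\xi_j}\|_\infty\|\xi_i-y_i\xi_\varphi\|_2$, uniform in $k$. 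The error from replacing $\xi_j$ by $y_j\xi_\varphi$ is more delicate and is controlled via a Cauchy--Schwarz estimate that crucially uses that right multiplication by elements of $B$ is an isometry in $\varphi$-norm (which follows from $B\subset M_\varphi$ being tracial under $\varphi|_B$), giving a uniform-in-$k$ bound of the form $\|y_i\|_\infty\,\|\xi_j-y_j\xi_\varphi\|_2$. The residual quantity $\|L_{y_j\xi_\varphi}^*(u_ky_i\xi_\varphi)\|_{\rL^2(B)}=\|\rE_B(y_j^*u_ky_i)\|_\varphi$ tends to zero as $k\to\infty$ directly by the weakly mixing hypothesis \eqref{eq-mixing}.

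The main obstacle is precisely this last step: bridging between the weakly mixing condition, which is stated only for bounded $x,y\in M\ominus B$, and the right-$B$-bounded $\rL^2$-vectors $\xi_i,\xi_j$ produced by the structure theorem. The tracial property of $\varphi|_B$, coming for free from $B\subset M_\varphi$, is what makes the $\xi_j$-approximation estimate uniform in $k$; once that estimate is secured, the Parseval identity immediately closes the contradiction.
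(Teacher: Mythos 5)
Your analytic core is fine, and it is a genuinely different route from the paper's. After the reduction to a nonzero $P$-$B$-subbimodule of $\rL^2(M)\ominus\rL^2(B)$ generated by right-bounded vectors $\xi_1,\dots,\xi_n$ with $\langle\xi_j,\xi_i\rangle_B=\delta_{ij}q_i$, your two approximation estimates do hold: replacing $\xi_i$ by $y_i\xi_\varphi$ costs $\|L_{\xi_j}\|_\infty\,\|\xi_i-y_i\xi_\varphi\|$ uniformly in $k$, and testing $L_{\xi_j-y_j\xi_\varphi}^{\,*}(u_ky_i\xi_\varphi)$ against $b\xi_\varphi$ gives $|\langle u_ky_ib^*\xi_\varphi,\xi_j-y_j\xi_\varphi\rangle|\leq\|y_i\|_\infty\,\|b\|_\varphi\,\|\xi_j-y_j\xi_\varphi\|$, where $\|b^*\|_\varphi=\|b\|_\varphi$ because $\varphi|_B$ is a trace ($B\subset M_\varphi$); choosing first $y_i$ and then $y_j$ in $M\ominus B$ and invoking \eqref{eq-mixing} then kills each coefficient $b^{(k)}_{ji}\xi_\varphi=L_{\xi_j}^*(u_k\xi_i)$. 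Compared with the paper, which extracts a row vector $\xi$ and a $\ast$-homomorphism $\pi:P\to\mathbf M_n(B)$ with $a\xi=\xi\pi(a)$, takes the polar decomposition $\xi^*=v|\xi^*|$, uses the unitaries $u_k$ to place $v$ in $\mathbf M_{n,1}(B1_P)$ and to get $P'\cap 1_PM1_P\subset B$ (Claim \ref{claim-commutant}), and then eliminates $|\xi^*|$ by Popa's averaging Lemma \ref{lem-popa} (Claim \ref{claim-contradiction}), your Parseval/Fourier-coefficient argument avoids the polar decomposition, the relative commutant computation and the averaging lemma altogether; both proofs spend the mixing hypothesis in the same place, namely on expectations $\rE_B(x^*u_ky)$ of off-diagonal terms.

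The genuine gap is your very first structural step. The hypothesis is that $\mathcal K$ has \emph{finite trace} as a right $B$-module, not finite rank, and finite trace does not yield a decomposition into \emph{finitely many} cyclic modules: in general $\overline{(1-e_B)\mathcal K}\cong\bigoplus_{i\in I}q_i\rL^2(B)$ with $I$ infinite and $\sum_i\varphi(q_i)<+\infty$. With infinitely many summands your contradiction evaporates: for each fixed $j$ one has $\|b^{(k)}_{ji}\|_\varphi\to0$, yet the Parseval sum can remain equal to $\varphi(q_i)$ for every $k$, the mass simply escaping to the tail in $j$ as $k$ varies, so ``finitely many terms'' is exactly what is missing. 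You must therefore first cut down to a nonzero \emph{finitely generated} $P$-$B$-subbimodule; this is precisely the paper's opening move, replacing $\mathcal K$ by $P_{\rL^2(M)\ominus\rL^2(B)}(\mathcal K z)$ for a large enough central projection $z\in\mathcal Z(B)$ via \cite[Lemma A.1]{Va06} (right multiplication by a central $z$ commutes with the left $P$-action, so the cut-down is still a $P$-$B$-bimodule). With that reduction inserted (and taking the closure of $(1-e_B)\mathcal K$, which need not be closed as you wrote it), your argument is complete.
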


\begin{proof}
By contradiction, assume that $\mathcal K \not\subset 1_P\rL^2(B)$. Using \cite[Lamma A.1]{Va06} and up to replacing $\mathcal K$ by $P_{\rL^2(M) \ominus \rL^2(B)}(\mathcal Kz)$ where $z \in \mathcal Z(B)$ is a large enough projection, we may assume that $\mathcal K \subset 1_P\rL^2(M) \ominus 1_P\rL^2(B)$ is a nonzero $P$-$B$-subbimodule that is finitely generated as a right $B$-module. Proceeding as in the proof of \cite[Theorem 2.3]{HV12}, there exist $n \geq 1$, a nonzero vector $\xi \in \mathbf M_{1,n}(\mathcal K)$ and a normal $\ast$-homomorphism $\pi : P \to \mathbf M_n(B)$ such that $a \xi = \xi \pi(a)$ for every $a \in P$. Denote by $\rE_{\mathbf M_n(B)} : \mathbf M_n(M) \to \mathbf M_n(B)$ the unique $(\varphi \otimes \tr_n)$-preserving conditional expectation and put $\mathbf M_n(M) \ominus \mathbf M_n(B) = \ker(\rE_{\mathbf M_n(B)})$. Letting $\xi^\ast = J_{\mathbf M_n(M)} (\xi) \in \rL^2(\mathbf M_n(M)) \ominus \rL^2(\mathbf M_n(B))$, we have $\xi^{*} \in \mathbf M_{n, 1}(J_{M} \mathcal K)$, $\xi^* 1_P = \xi^*$ and $\xi^* a = \pi(a) \xi^*$ for every $a \in P$. Write $\xi^* = v |\xi^*|$ for the polar decomposition of $\xi^*$ in the standard form of $\mathbf M_n(M)$. Then we have that $v \in \pi(1_P)\mathbf M_{n, 1}(M1_P)$ is a nonzero partial isometry such that $va = \pi(a) v$ for every $a \in P$. We moreover have that $|\xi^*| \in \rL^2(M)_+$ and $a |\xi^*| = |\xi^*| a$ for every $a \in P$. First, we prove the following claim. 
\begin{claim}\label{claim-commutant}
We have $v \in \pi(1_P)\mathbf M_{n, 1}(B1_P)$ and $P' \cap 1_PM1_P = P' \cap 1_PB1_P$.
\end{claim}

Indeed, for every $k \in \N$, we have
$$(v - \rE_{\mathbf M_n(B)}(v)) u_k = \pi(u_k) (v - \rE_{\mathbf M_n(B)}(v)).$$
Put $w = (v - \rE_{\mathbf M_n(B)}(v)) \in \mathbf M_n(M) \ominus \mathbf M_n(B)$. Since $\supp(ww^*) \leq \pi(1_P)$ and since $\pi(u_k) \in \mathcal U(\pi(1_P) \mathbf M_n(B) \pi(1_P))$ for every $k \in \N$, \eqref{eq-mixing} implies
$$\|\rE_{\mathbf M_n(B)}(w w^*) \|_{\varphi \otimes \tr_n} = \|\rE_{\mathbf M_n(B)}(\pi(u_k)w w^*) \|_{\varphi \otimes \tr_n}  = \|\rE_{\mathbf M_n(B)}(w u_k w^*) \|_{\varphi \otimes \tr_n} \to 0 \; \text{ as } k \to \infty.$$
Therefore, we have $w = 0$ and so $v = \rE_{\mathbf M_n(B)}(v) \in \pi(1_P)\mathbf M_{n, 1}(B1_P)$. Likewise, let $x \in P' \cap 1_PM1_P$ be any element. For every $k \in \N$, we have
$$(x - \rE_{B}(x)) u_k = u_k (x - \rE_{B}(x)).$$
Put $y = x - \rE_{B}(x) \in 1_PM1_P \ominus 1_PB1_P$. Since $\supp(yy^*) \leq 1_P$ and since $u_k \in \mathcal U(1_P B 1_P)$ for every $k \in \N$, \eqref{eq-mixing} implies
$$\|\rE_{B}(yy^*) \|_{\varphi} = \|\rE_{B}(u_kyy^*) \|_{\varphi}  = \|\rE_{B}(y u_k y^*) \|_{\varphi} \to 0 \; \text{ as } k \to \infty.$$
Therefore, we have $y = 0$ and so $x = \rE_{B}(x) \in B$. Next, we prove the following claim.

\begin{claim}\label{claim-contradiction}
We have $|\xi^*| \in \rL^2(B)$.
\end{claim}

Indeed, write $\eta$ for the orthogonal projection of $|\xi^*|$ onto $\rL^2(M) \ominus \rL^2(B)$. We show that $\eta = 0$. We still have $a \eta = \eta  a$ for every $a \in P$. Since $\eta \in \rL^2(M) \ominus \rL^2(B)$, since $1_P \eta 1_P = \eta$ and since $1_P \in M_\varphi$, we may choose a sequence $x_j \in M \ominus B$ such that $1_P x_j 1_P = x_j$ for every $j \in \N$ and $x_j \xi_\varphi \to \eta$ as $j \to \infty$. Note that $(x_j)_j$ need not be uniformly bounded. Since $P' \cap 1_PM1_P = P' \cap 1_PB1_P$ by Claim \ref{claim-commutant}, Lemma \ref{lem-popa} implies that for every $j \in \N$, there exists a unitary $u_j \in \mathcal U(P)$ such that $\|u_j x_j u_j^* - x_j\|_\varphi \geq \|x_j\|_\varphi$. Then for every $j \in \N$, we have
\begin{align*}
2 \|\eta  - x_j\xi_\varphi \| & \geq \|u_j (\eta - x_j\xi_\varphi) u_j^* - (\eta - x_j \xi_\varphi)\| \\
& = \|u_j \, x_j \xi_\varphi \, u_j^* - x_j\xi_\varphi\| \\
& = \|u_j x_j u_j^* - x_j\|_\varphi \\
& \geq \|x_j\|_\varphi.
\end{align*}
Since $\lim_j \|\eta  - x_j\xi_\varphi \| = 0$, we have $\lim_j \|x_j\|_\varphi = 0$ and so $\eta = 0$. This shows that $|\xi^*| \in \rL^2(B)$. 

Combining Claims \ref{claim-commutant} and \ref{claim-contradiction}, we obtain that $\xi^* = v |\xi^*| \in \rL^2(B)$. Since by construction $\xi^* \in \rL^2(M) \ominus \rL^2(B)$, we obtain $ \xi^* = 0$ and so $\xi = 0$. This is a contradiction.
\end{proof}

\begin{thm}\label{thm-mixing}
Let $(M, \varphi)$ be any $\sigma$-finite von Neumann algebra endowed with any faithful normal state. Let $B \subset M_\varphi$ be any von Neumann subalgebra. The following assertions hold:
\begin{itemize}
\item [$(\rm i)$] Assume that the inclusion $B \subset M$ is $\varphi$-weakly mixing. Then $\mathcal{Q}\mathcal{N}_{M}(B)\dpr =  B$.

\item [$(\rm ii)$] Assume that the inclusion $B \subset M$ is $\varphi$-mixing. Let $1_P \in B$ be any nonzero projection and $P \subset 1_PB1_P$ any diffuse von Neumann subalgebra. Then $\mathcal{Q}\mathcal{N}_{1_P M 1_P}(P)\dpr \subset 1_P B 1_P$.

\item [$(\rm iii)$] Assume that the inclusion $B \subset M$ is $\varphi$-mixing.  Let $1_Q \in M$ be any nonzero projection and $Q \subset 1_QM1_Q$ any diffuse von Neumann subalgebra with expectation. If $Q \preceq_M B$, then $\mathcal N_{1_QM1_Q}(Q)\dpr \preceq_M B$.
\end{itemize}
\end{thm}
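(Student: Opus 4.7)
The proofs of the three items will all rest on Lemma \ref{lem-mixing}: any $P$-$B$-subbimodule of $1_P\rL^2(M)$ which is finitely generated as a right $B$-module is forced into $1_P\rL^2(B)$, provided one has a net of unitaries in $P$ along which $\rE_B$ vanishes on $M \ominus B$. The plan is to manufacture such a bimodule in each case.

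For (i), given $x \in \mathcal{QN}_M(B)$, I would use the defining condition $Bx \subset \sum_{j=1}^k x_j B$ to see that $\mathcal K := \overline{\mathrm{span}(BxB)\xi_\varphi}$ is a $B$-$B$-subbimodule of $\rL^2(M)$ that is finitely generated as a right $B$-module (by $x_1,\dots,x_k$). The $\varphi$-weak mixing hypothesis directly supplies a net of unitaries in $B$ satisfying \eqref{eq-mixing}, so Lemma \ref{lem-mixing} (with $P = B$, $1_P = 1$) gives $\mathcal K \subset \rL^2(B)$, hence $x \in B$.

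For (ii), the same construction works with the given diffuse $P$: for $x \in \mathcal{QN}_{1_PM1_P}(P)$, the subspace $\mathcal K := \overline{\mathrm{span}(PxB)\xi_\varphi} \subset 1_P\rL^2(M)$ is a $P$-$B$-subbimodule, finitely generated as a right $B$-module. The net of unitaries in Lemma \ref{lem-mixing} is now built from diffuseness of $P$: any sequence $u_k \in \mathcal U(P) \subset \mathcal U(1_PB1_P)$ tending weakly to $0$ satisfies $\lim_k \|\rE_B(y^*u_k z)\|_\varphi = 0$ for every $y,z \in M \ominus B$, by $\varphi$-mixingness of $B \subset M$. The lemma then forces $\mathcal K \subset 1_P\rL^2(B)$, and using $x = 1_Px1_P$ one concludes $x \in 1_PB1_P$.

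For (iii), I plan to reduce to (ii). From $Q \preceq_M B$ extract nonzero projections $e \in Q$, $f \in B$, a partial isometry $v \in eMf$, and a normal $*$-homomorphism $\theta : eQe \to fBf$ with $av = v\theta(a)$ for $a \in eQe$ and $\theta(eQe) \subset fBf$ with expectation. After shrinking $e$, assume $\theta$ is injective, so that $P := \theta(eQe)$ is diffuse in $fBf$. The corner inclusion $fBf \subset fMf$ inherits $\varphi_f$-mixingness, so (ii) yields $\mathcal{QN}_{fMf}(P)'' \subset fBf$. To derive $\mathcal N_{1_QM1_Q}(Q)'' \preceq_M B$ I would then propagate via the partial isometry $v$: let $\mathcal H_0 \subset 1_Q\rL^2(M)$ be the closure of the sum of all $Q$-$B$-subbimodules that are finitely generated as right $B$-modules. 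Then $\mathcal H_0$ is nonzero (it contains $\overline{\mathrm{span}(QvB)\xi_\varphi}$) and is preserved by $\mathcal N_{1_QM1_Q}(Q)$, since any normalizer sends a finitely generated right $B$-submodule to another. Spatiality through $v$ combined with the containment $\mathcal{QN}_{fMf}(P)'' \subset fBf$ from (ii) then pins down enough finite right-$B$-type data inside $\mathcal H_0$ to produce the required intertwining $\mathcal N_{1_QM1_Q}(Q)'' \preceq_M B$.

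The hard part will be this last propagation step. The naive identity $v^*uv \in \mathcal{QN}_{fMf}(P)''$ for $u \in \mathcal N_{1_QM1_Q}(Q)$ fails because normalizers do not preserve the corner projection $e$ and $v$ is only a partial isometry. The remedy is to work with the bimodule $\mathcal H_0$, or equivalently with the associated finite-trace projections in $Q' \cap 1_Q\langle M, B\rangle 1_Q$, rather than element-wise, so that (ii) is applied on the bimodule side; the partial isometry $v$ then transports the intertwining datum out of the $fMf$-corner back to $M$.
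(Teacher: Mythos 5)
Your items (i) and (ii) are handled exactly as in the paper: both are direct applications of Lemma \ref{lem-mixing}, with the net of unitaries coming from weak mixing in (i) and from diffuseness of $P$ plus mixing in (ii), and your remark that a quasi-normalizing element generates a subbimodule finitely generated as a right $B$-module is the standard way the lemma is invoked.

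The gap is in (iii), precisely at the step you yourself flag as ``the hard part''. Two concrete problems. First, $\overline{\operatorname{span}(QvB)\xi_\varphi}$ does not witness nontriviality of your $\mathcal H_0$: the relation $av = v\theta(a)$ holds only for $a \in eQe$, so while $\overline{vB\xi_\varphi}$ is a singly generated $eQe$-$B$-bimodule, left multiplication by all of $Q$ destroys finite generation over $B$ (the projection onto $\overline{\operatorname{span}(QvB)\xi_\varphi}$ is $\bigvee_{u \in \mathcal U(Q)} u p_0 u^*$, with $p_0 \in \langle M, B\rangle$ the projection onto $\overline{vB\xi_\varphi}$, and this supremum has no reason to have finite trace). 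Second, and decisively, the sentence ``spatiality through $v$ \dots\ pins down enough finite right-$B$-type data to produce the required intertwining'' is not an argument: it is exactly the missing mechanism for passing from a statement about the corner $eQe$ to a statement about $\mathcal N_{1_QM1_Q}(Q)\dpr$, and no such mechanism is supplied.

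For comparison, the paper closes this step as follows. For $x \in \mathcal{Q}\mathcal{N}_{eMe}(eQe)$ one checks directly from $av = v\theta(a)$ that $v^*xv \in \mathcal{Q}\mathcal{N}_{fMf}(\theta(eQe))$; item (ii) (applied with $1_P = f$ and $P = \theta(eQe)$, which is diffuse, being a central cut-down of the diffuse algebra $eQe$) then gives $v^*\,\mathcal{Q}\mathcal{N}_{eMe}(eQe)\dpr\,v \subset fBf$ and $v^*v \in \theta(eQe)' \cap fMf \subset fBf$, whence $\mathcal{Q}\mathcal{N}_{eMe}(eQe)\dpr \preceq_M B$. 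The passage to the normalizer of $Q$ itself goes through the \emph{stable normalizer}, which, unlike $\mathcal N_{1_QM1_Q}(Q)\dpr$, is compatible with corners: $e\,\bigl(s\mathcal N_{1_QM1_Q}(Q)\dpr\bigr)\,e = s\mathcal N_{eMe}(eQe)\dpr$ by \cite[Lemma 3.4]{BHV15}, and intertwining passes from a corner with expectation to the whole algebra by \cite[Lemma 4.8]{HI15}; finally $\mathcal N_{1_QM1_Q}(Q)\dpr \subset s\mathcal N_{1_QM1_Q}(Q)\dpr$ is with expectation, giving $\mathcal N_{1_QM1_Q}(Q)\dpr \preceq_M B$. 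Without the stable-normalizer device (or an equivalent replacement for your bimodule $\mathcal H_0$ argument), your proof of (iii) does not close.
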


\begin{proof} 
$(\rm i)$ Since the inclusion $B \subset M$ is $\varphi$-weakly mixing, there exists a net of unitaries $(u_k)_k$ in $B$ such that \eqref{eq-mixing} holds. Lemma \ref{lem-mixing} implies that $\mathcal{Q}\mathcal{N}_{M}(B)\dpr = B$.

$(\rm ii)$ Since $P$ is diffuse, we may choose a net of unitaries $(u_k)_k$ in $P$ that converges weakly to $0$ as $k \to \infty$. Since the inclusion $B \subset M$ is $\varphi$-mixing, \eqref{eq-mixing} holds for the net $(u_k)_k$. Lemma \ref{lem-mixing} implies that $\mathcal{Q}\mathcal{N}_{1_P M 1_P}(P)\dpr \subset 1_P B 1_P$.

$(\rm iii)$ Since $Q \preceq_M \rL(G)$, there exist projections $q \in Q$ and $p \in \rL(G)$, a unital normal $\ast$-homomorphism $\pi : qQq \to p\rL(G)p$ and a nonzero partial isometry $v \in pMq$ such that $a v = v \pi(a)$ for every $a \in qQq$. Note that $vv^* \in (qQq)' \cap qMq \subset \mathcal{Q}\mathcal{N}_{qMq}(qQq)\dpr$ and $v^*v \in \pi(qQq)' \cap p M p \subset p B p$ using item $(\rm ii)$. For every $x \in \mathcal{Q}\mathcal{N}_{qMq}(qQq)$, it is straightforward to see that $v^* x v \in \mathcal{Q}\mathcal{N}_{pMp}(\pi(qQq))$ and so $v^* \mathcal{Q}\mathcal{N}_{qMq}(qQq)\dpr  v \subset p B p$ using item $(\rm ii)$. This shows that $vv^* \mathcal{Q}\mathcal{N}_{qMq}(qQq)\dpr vv^* \preceq_M B$ and so $\mathcal{Q}\mathcal{N}_{qMq}(qQq)\dpr \preceq_M B$. Since $s\mathcal{N}_{qMq}(qQq)\dpr \subset \mathcal{Q}\mathcal{N}_{qMq}(qQq)\dpr$ is with expectation and since $q (s\mathcal{N}_{M}(Q)\dpr) q = s\mathcal{N}_{qMq}(qQq)\dpr$ (see \cite[Lemma 3.4]{BHV15}), this implies that $q (s\mathcal{N}_{M}(Q)\dpr) q\preceq_M B$ and so $s\mathcal{N}_{M}(Q)\dpr \preceq_M B$ (see \cite[Lemma 4.8]{HI15}). Since $\mathcal{N}_{M}(Q)\dpr \subset s\mathcal{N}_{M}(Q)\dpr$ is with expectation, we finally obtain $\mathcal{N}_{M}(Q)\dpr \preceq_M B$ by \cite[Lemma 4.8]{HI15}.
\end{proof}

\section{Popa's deformation/rigidity theory}\label{appendix-deformation}

\subsection*{Popa's malleable deformation}

Let $U : \R \curvearrowright H_\R$ be any strongly continuous orthogonal representation. Let $G$ be any countable group and $\pi : G \curvearrowright H_\R$ any orthogonal representation such that $[U, \pi] = 0$. Let $(N, \psi)$ be any $\sigma$-finite von Neumann algebra endowed with a faithful normal state. Define
\begin{itemize}
\item $M = \Gamma(U, \pi)\dpr$ and $\varphi$ the canonical faithful normal state on $\Gamma(U, \pi)\dpr$.
\item $\widetilde M = \Gamma(U \oplus U, \pi \oplus \pi)\dpr$ and $\widetilde \varphi$ the canonical faithful normal state on $\Gamma(U \oplus U, \pi \oplus \pi)\dpr$.
\item $\mathcal M = (M \ovt N) \rtimes_{\varphi \otimes \psi} \R$, the continuous core of $M \ovt N$ with respect to $\varphi \otimes \psi$.
\item $\widetilde{\mathcal M} = (\widetilde M \ovt N) \rtimes_{\widetilde \varphi \otimes \psi} \R$, the continuous core of $\widetilde M \ovt N$ with respect to  $\widetilde \varphi \otimes \psi$.
\item $\mathcal B = (\rL(G) \ovt N) \rtimes_{\varphi \otimes \psi} \R$, the continuous core of $\rL(G) \ovt N$ with respect to $\varphi \otimes \psi$.
\end{itemize}
We can regard $\widetilde{\mathcal M}$ as the semifinite amalgamated free product von Neumann algebra
\begin{equation*}
\widetilde{\mathcal M} = \left( (M \ovt N) \rtimes_{\varphi \otimes \psi} \R \right) \ast_{\mathcal B} \left( (M \ovt N) \rtimes_{\varphi \otimes \psi} \R\right),
\end{equation*}
where we identify $\mathcal M$ with the left copy of $(M \ovt N) \rtimes_{\varphi \otimes \psi} \R$ inside the amalgamated free product. We simply denote by $\tau$ the canonical faithful normal semifinite trace on $\mathcal M$ and by $\|\cdot\|_2$ the $2$-norm associated with $\tau$. Consider the following orthogonal transformations on $H_\R \oplus H_\R$:
$$
W  =  
\begin{pmatrix}
1 & 0 \\
0 & -1
\end{pmatrix}  \quad \text{ and } \quad
V_t  =  
\begin{pmatrix}
\cos(\frac{\pi}{2} t) & -\sin(\frac{\pi}{2} t) \\
\sin(\frac{\pi}{2} t) & \cos(\frac{\pi}{2} t)
\end{pmatrix}, \forall t \in \R.
$$
Define the associated state preserving deformation $(\theta_t, \beta)$ on $\widetilde M \ovt N$ by 
\begin{equation*}
\theta_t = \Ad(\mathcal F (U_t)) \otimes \id_N \quad \text{ and } \quad \beta = \Ad(\mathcal{F}(V)) \otimes \id_N. 
\end{equation*}
Since $V_t$ and $W$ commute with $\pi \oplus \pi$ and $U \oplus U$, it follows that $\theta_t$ and $\beta$ commute with the actions $(\sigma^\pi \ast \sigma^\pi) \otimes \id_N$ and $\sigma^{\widetilde \varphi \otimes \psi}$. We can then extend the deformation $(\theta_t, \beta)$ to $\widetilde{\mathcal M}$ after defining $\beta |_{\mathcal B} = \id_{\mathcal B}$ and $\theta_t|_{\mathcal B}  = \id_{\mathcal B}$ for every $t \in \R$. Moreover, it is easy to check that the deformation $(\theta_t, \beta)$ is {\em malleable} in the sense of Popa (see \cite{Po03}):
\begin{itemize}
\item [$(\rm i)$] $\lim_{t \to 0} \|x - \theta_t(x)\|_2 = 0$ for all $x \in \widetilde{\mathcal M} \cap \rL^2(\widetilde {\mathcal M})$.
\item [$(\rm ii)$] $\beta^2 = \id_{\widetilde{\mathcal M}}$ and $\theta_t \beta = \beta \theta_{-t}$ for all $t \in \R$.
\end{itemize}
Since $\theta_t , \beta \in \Aut(\widetilde{\mathcal M})$ are trace-preserving, we will also denote by $\theta_t, \beta \in \mathcal U(\rL^2(\widetilde{\mathcal M}))$ the corresponding Koopman unitary operators.

\subsection*{Locating subalgebras}

We can locate subalgebras of $\mathcal M$ for which the deformation $(\theta_t)_t$ converges uniformly to the identity with respect to $\|\cdot\|_2$ on the unit ball.

\begin{thm}[{\cite[Theorem 4.3]{HR10}, \cite[Theorem 2.10]{Ho12b}}]\label{appendix-thm-1}
Keep the same notation as above. Let $p \in \mathcal M$ be any nonzero finite trace projection and $\mathcal P \subset p \mathcal M p$ any von Neumann subalgebra. If the deformation $(\theta_t)_t$ converges uniformly in $\|\cdot\|_2$ on $\Ball(\mathcal P)$, then $\mathcal P \preceq_{\mathcal M} \mathcal B$.
\end{thm}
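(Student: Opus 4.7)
The strategy follows Popa's malleable deformation technique in the semifinite amalgamated free product setting, as developed in \cite{HR10}. The key structural fact is that $\widetilde{\mathcal M}$ decomposes as the amalgamated free product $\widetilde{\mathcal M} = \mathcal M \ast_{\mathcal B} \mathcal M$, the deformation $(\theta_t)_t$ flows transversally to this decomposition, and the symmetry $\beta$ exchanges the two copies of $\mathcal M$ while fixing $\mathcal B$ pointwise.

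The first step is to establish Popa's transversality inequality: for every $x \in \rL^2(\mathcal M)$ and every $t \in \R$,
\[
\|\theta_{2t}(x) - x\|_2 \leq 2 \, \|\theta_t(x) - \rE_{\mathcal M}(\theta_t(x))\|_2,
\]
where $\rE_{\mathcal M} : \widetilde{\mathcal M} \to \mathcal M$ denotes the canonical $\tau$-preserving conditional expectation. This is an immediate consequence of the relations $\beta^2 = \id$, $\beta \theta_t \beta = \theta_{-t}$, and $\beta|_{\mathcal M} = \id_{\mathcal M}$: writing $\theta_t(x) = y + z$ with $y = \rE_{\mathcal M}(\theta_t(x)) \in \rL^2(\mathcal M)$ and $z \perp \rL^2(\mathcal M)$, one has $\beta(y) = y$ while $\beta(z) \perp \rL^2(\mathcal M)$, so $\theta_{2t}(x) - x = \theta_t(\theta_t(x) - \theta_{-t}(x)) = \theta_t(z - \beta(z))$, and the triangle inequality together with the isometry property of $\theta_t$ on $\rL^2$ gives the bound.

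The second step exploits the hypothesis. Since $\Ball(\mathcal P) \subset p\mathcal M p$ has finite $\tau$-diameter and $(\theta_t)_t$ converges uniformly on it, we may fix $t_0 > 0$ such that $\|\theta_{t_0}(x) - x\|_2 \leq \tfrac{1}{4}\tau(p)^{1/2}$ for every $x \in \Ball(\mathcal P)$. Combined with transversality (applied in the form $\|\rE_{\mathcal M}(\theta_{2t_0}(x)) - x\|_2 \leq 2\|\theta_{t_0}(x) - x\|_2$), this yields the uniform lower bound $\|\rE_{\mathcal M}(\theta_{2t_0}(u))\|_2 \geq \tfrac{1}{2}\tau(p)^{1/2}$ for every unitary $u \in \mathcal U(\mathcal P)$.

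The third and most delicate step is to upgrade this uniform lower bound into an honest bimodule intertwiner. One forms the $\mathcal P$-$\mathcal M$-subbimodule of $\rL^2(p\mathcal M)$ spanned by the vectors $\rE_{\mathcal M}(\theta_{2t_0}(a)\,y)$ for $a \in \mathcal P$, $y \in p\mathcal M p$, and shows that, by virtue of the amalgamated free product structure $\widetilde{\mathcal M} = \mathcal M \ast_{\mathcal B} \mathcal M$, this bimodule is actually a finite right $\mathcal B$-module: reduced words of positive length appearing in $\theta_{2t_0}(a) \, y$ are killed by $\rE_{\mathcal M}$, so only the "short" $\mathcal B$-bounded contributions survive. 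An application of the intertwining-by-bimodules criterion in the semifinite setting (see \cite[Theorem 2.3]{HV12} and \cite[Section 4]{HI17}) then yields $\mathcal P \preceq_{\mathcal M} \mathcal B$. The main technical obstacle lies precisely in this last step: the tracial amalgamated free product arguments of Ioana--Peterson--Popa must be replaced by their semifinite counterparts, carefully tracking the interplay between the canonical semifinite trace $\tau$ on $\mathcal M$ and the amalgamation over $\mathcal B$, which is the substance of \cite[Theorem 4.3]{HR10} and \cite[Theorem 2.10]{Ho12b}.
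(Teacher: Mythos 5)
Your Steps 1 and 2 are correct, but they only reproduce the easy opening of the argument: the paper likewise just records that uniform convergence on $\Ball(\mathcal P)$ yields $\kappa > 0$ and $n \in \N$ with $\tau(\theta_{2^{-n}}(u)u^*) \geq \kappa$ for all $u \in \mathcal U(\mathcal P)$. The genuine gap is in your Step 3. The mechanism you invoke --- that in $\theta_{2t_0}(a)\,y$ the ``reduced words of positive length are killed by $\rE_{\mathcal M}$, so only the $\mathcal B$-bounded contributions survive'' --- is false for $0 < t_0 < 1$. A letter of $\theta_t(a)$ has the form $W\bigl(\cos(\tfrac{\pi}{2}t)\xi \oplus \sin(\tfrac{\pi}{2}t)\xi\bigr)$, which is not orthogonal to $\mathcal M$; in fact $\rE_{\mathcal M}\circ\theta_t|_{\mathcal M}$ is a unital trace-preserving completely positive map converging pointwise to $\id_{\mathcal M}$ as $t \to 0$ (on a reduced word of length $n$ it acts, to leading order, as multiplication by $\cos^n(\tfrac{\pi}{2}t)$). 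Hence the vectors $\rE_{\mathcal M}(\theta_{2t_0}(a)y)$ span something far too large to be of finite trace as a right $\mathcal B$-module, and no intertwining into $\mathcal B$ can be extracted this way. (A secondary problem: your span is not left $\mathcal P$-invariant as defined, since $b\,\theta_{2t_0}(a)$ is not of the form $\theta_{2t_0}(a')$ for $b, a \in \mathcal P$.) The free position of the two copies of $\mathcal M$ over $\mathcal B$, i.e.\ $\rE_{\mathcal M}\circ\theta_t|_{\mathcal M} = \rE_{\mathcal B}$, holds only at $t = 1$, and the whole point of the malleability relations $\beta^2 = \id$, $\theta_t\beta = \beta\theta_{-t}$ is Popa's doubling trick that upgrades the small-time estimate to a statement at time $1$; this is precisely the step your plan omits, and it is the actual content of \cite[Theorem 4.3]{HR10}.

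For comparison, the paper's proof runs by contradiction: assuming $\mathcal P \npreceq_{\mathcal M} \mathcal B$, it takes a net $u_k \in \mathcal U(\mathcal P)$ with $\lim_k \|\rE_{\mathcal B}(b^* u_k a)\|_2 = 0$ for all $a, b \in p\mathcal M$; it uses the uniform bound together with malleability (packaged as \cite[Theorem 2.5]{BHR12}) to produce a nonzero partial isometry $v \in p\widetilde{\mathcal M}\theta_1(p)$ with $xv = v\theta_1(x)$ for all $x \in \mathcal P$; and it then exploits the time-$1$ free position over $\mathcal B$ (the Claim in the proof of \cite[Theorem 3.3]{BHR12}, the semifinite analogue of \cite[Claim 4.4]{HR10}) to compute $\|v^*v\|_2 = \lim_k \|\rE_{\theta_1(\mathcal M)}(v^* u_k v)\|_2 = 0$, contradicting $v \neq 0$. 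Without the doubling step and the time-$1$ computation, your outline defers the entire substance of the theorem to the cited references while sketching a replacement argument that would not work.
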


\begin{proof}
Since the deformation $(\theta_t)_t$ converges uniformly in $\|\cdot\|_2$ on $\Ball(\mathcal P)$, there exist $\kappa > 0$ and $n \in \N$ large enough so that $\tau(\theta_{2^{-n}}(u) u^*) \geq \kappa$ for all $u \in \mathcal U(\mathcal P)$. Now the rest of the proof is almost entirely identical to the one of \cite[Theorem 4.3]{HR10} except for some obvious modifications.

By contradiction, assume that $\mathcal P \npreceq_{\mathcal M} \mathcal B$ and choose a net of unitaries $u_k \in \mathcal U(\mathcal P)$ such that $\lim_k \|\rE_{\mathcal B}(b^* u_k a)\|_2 = 0$ for all $a, b \in p \mathcal M$. Using Popa's malleable deformation in combination with \cite[Theorem 2.5]{BHR12} (in lieu of \cite[Theorem 2.4]{CH08}), there exists a nonzero partial isometry $v \in p \widetilde{\mathcal M} \theta_1(p)$ such that $vv^* \in p \mathcal M p$, $v^*v \in \theta_1(p \mathcal M p)$ and $x v = v \theta_1(x)$ for every $x \in \mathcal P$. Using \cite[Claim in the proof of Theorem 3.3]{BHR12} (in lieu of \cite[Claim 4.4]{HR10}), we infer that 
$$\|v^*v\|_2 = \| \rE_{\theta_1(\mathcal M)}(v^*v) \|_2 = \lim_k \| \rE_{\theta_1(\mathcal M)}(v^*v \theta_1(u_k)) \|_2 = \lim_k \| \rE_{\theta_1(\mathcal M)}(v^* u_k v) \|_2 = 0.$$
This contradicts the fact that $v \neq 0$. Thus, we have $\mathcal P \preceq_M \mathcal B$.
\end{proof}

\subsection*{Popa's spectral gap rigidity}

We prove the following general spectral gap rigidity result inside $\mathcal M$.

\begin{thm}[{\cite[Theorem 6.5]{Ho12b}}]\label{appendix-thm-2}
Keep the same notation as above. Let $p \in \mathcal M$ be any nonzero finite trace projection and $\mathcal Q \subset p \mathcal M p$ any von Neumann subalgebra. Then at least one of the following assertions is true:
\begin{itemize}
\item There exists a nonzero projection $z \in \mathcal Z(\mathcal Q' \cap p\mathcal M p)$ such that $\mathcal Q z \lessdot_{\mathcal M} \mathcal B$.
\item The deformation $(\theta_t)_t$ converges uniformly in $\|\cdot\|_2$ on $\Ball(\mathcal Q' \cap p \mathcal M p)$.
\end{itemize}
\end{thm}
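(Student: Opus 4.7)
The plan is to establish the dichotomy by contradiction: assume that $(\theta_t)_t$ does \emph{not} converge uniformly in $\|\cdot\|_2$ on $\Ball(\mathcal Q' \cap p\mathcal Mp)$ and construct a nonzero central projection $z \in \mathcal Z(\mathcal Q' \cap p\mathcal Mp)$ with $\mathcal Qz \lessdot_{\mathcal M} \mathcal B$. This follows the Popa spectral gap rigidity template \cite{Po03}, adapted to the semifinite setting of $\mathcal M$ as in \cite[Theorem 6.5]{Ho12b}.

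The first step is Popa's transversality inequality. The malleability relations $\beta \theta_t \beta = \theta_{-t}$ and $\beta|_{\mathcal M} = \id_{\mathcal M}$ yield
$$\|x - \theta_{2t}(x)\|_2 \,\leq\, 2\,\|\theta_t(x) - \rE_{\mathcal M}(\theta_t(x))\|_2$$
for every $x \in p\mathcal M p$. Consequently, the failure of uniform convergence supplies a constant $\varepsilon > 0$, a sequence $t_n \to 0$, and contractions $x_n \in \mathcal Q' \cap p\mathcal Mp$ so that the vectors
$$\xi_n \,=\, \theta_{t_n}(x_n) - \rE_{\mathcal M}(\theta_{t_n}(x_n)) \,\in\, \rL^2(\widetilde{\mathcal M}) \ominus \rL^2(\mathcal M)$$
satisfy $\|\xi_n\|_2 \geq \varepsilon$. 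For every $u \in \mathcal U(\mathcal Q)$, the identity $[u,x_n] = 0$ together with
$$u\theta_{t_n}(x_n) - \theta_{t_n}(x_n)u \,=\, (u - \theta_{t_n}(u))\theta_{t_n}(x_n) + \theta_{t_n}(x_n)(\theta_{t_n}(u) - u)$$
and the $\mathcal M$-$\mathcal M$-bimodularity of $\rE_{\mathcal M}$ give $\lim_n \|u\xi_n - \xi_nu\|_2 = 0$. So $(\xi_n)_n$ is an asymptotically $\mathcal Q$-central sequence of nonzero vectors in the $\mathcal M$-$\mathcal M$-bimodule $\rL^2(\widetilde{\mathcal M}) \ominus \rL^2(\mathcal M)$.

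The second step transfers this asymptotic centrality into relative amenability over $\mathcal B$. The amalgamated free product decomposition $\widetilde{\mathcal M} = \mathcal M *_{\mathcal B} \mathcal M$, combined with the standard reduced-word analysis (cf.\ \cite[Section 2.5]{Va06}), shows that the $\mathcal M$-$\mathcal M$-bimodule $\rL^2(\widetilde{\mathcal M}) \ominus \rL^2(\mathcal M)$ is weakly contained in the Jones bimodule $\rL^2(\langle \mathcal M, e_{\mathcal B}\rangle) \cong \rL^2(\mathcal M) \otimes_{\mathcal B} \rL^2(\mathcal M)$. Using this, a weak-$*$ limit of the vector states $\langle \, \cdot \, \xi_n, \xi_n\rangle$ (after compressing to a finite trace corner of $\mathcal M$ containing $p$) produces a positive $\mathcal Q$-central functional on a corner of $\langle \mathcal M, \mathcal B\rangle$ whose restriction to the corresponding corner of $\mathcal M$ is normal and nonzero. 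By the characterization of relative amenability through such states (\cite[Remark 3.3]{Is17}, \cite{HI17}), this yields a nonzero projection $q \in \mathcal Q' \cap p\mathcal Mp$ with $\mathcal Q q \lessdot_{\mathcal M} \mathcal B$.

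To package $q$ as a central projection, I would invoke the maximality principle of \cite[Lemma 3.3(v)]{HI17}: there exists a largest projection $z \in \mathcal Z(\mathcal Q' \cap p\mathcal Mp)$ with $\mathcal Q z \lessdot_{\mathcal M} \mathcal B$, and this $z$ automatically dominates any $q$ produced above, so in particular $z \neq 0$, as required. The main obstacle, and what distinguishes the argument from the purely tracial version in \cite{Po03}, is controlling the weak-$*$ limit of the $\langle \, \cdot \, \xi_n, \xi_n\rangle$ in the non-tracial setting of $M \ovt N$: this is precisely why one works inside the continuous core $\mathcal M$ from the start (so that $p\mathcal Mp$ is a finite von Neumann algebra), and why one must verify with care that the weak limit remains norm continuous on $p\langle \mathcal M, \mathcal B\rangle p$ and normal upon restriction to $p\mathcal M p$ — these normality and continuity checks constitute the technical heart of the argument, and they are made possible only by the weak-containment statement for the bimodule $\rL^2(\widetilde{\mathcal M}) \ominus \rL^2(\mathcal M)$.
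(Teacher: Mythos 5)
Your overall strategy is the same as the paper's (this is the Popa spectral gap template of \cite{Po03, HI17}): use transversality to produce vectors $\xi_n = \theta_{t_n}(x_n) - \rE_{\mathcal M}(\theta_{t_n}(x_n))$ in $\rL^2(\widetilde{\mathcal M}) \ominus \rL^2(\mathcal M)$ with norms bounded below and asymptotically commuting with $\mathcal Q$, then convert them into relative amenability of a corner of $\mathcal Q$ over $\mathcal B$. The gap is in the conversion step. You assert that the $\mathcal M$-$\mathcal M$-bimodule $\rL^2(\widetilde{\mathcal M}) \ominus \rL^2(\mathcal M)$ is \emph{weakly contained} in the Jones bimodule $\rL^2(\mathcal M) \otimes_{\mathcal B} \rL^2(\mathcal M)$ and cite the reduced-word analysis for this. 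That is not what the reduced-word analysis gives, and in the generality of this theorem (no amenability assumption whatsoever on $G$ or $N$, hence none on $\mathcal B$) the weak containment is unjustified and should not be expected: the words containing letters from the second copy of $\mathcal M$ organize into bimodules of the form $\rL^2(\mathcal M) \otimes_{\mathcal B} \mathcal K \otimes_{\mathcal B} \rL^2(\mathcal M)$ with $\mathcal K$ a $\mathcal B$-$\mathcal B$-bimodule built from copies of $\rL^2(\mathcal M) \ominus \rL^2(\mathcal B)$, and there is no general principle forcing such a fusion to be weakly contained in $\rL^2(\mathcal M) \otimes_{\mathcal B} \rL^2(\mathcal M)$ (that would essentially require $\mathcal K \prec \rL^2(\mathcal B)$, a co-amenability-type condition). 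The correct structural fact, and the one the paper uses (citing Ueda), is the honest $\mathcal M$-$\mathcal M$-bimodule isomorphism $\rL^2(\widetilde{\mathcal M}) \ominus \rL^2(\mathcal M) \cong \rL^2(\mathcal M) \otimes_{\mathcal B} \mathcal L$ for a suitable $\mathcal B$-$\mathcal M$-bimodule $\mathcal L$: this makes the left action of $\langle \mathcal M, e_{\mathcal B}\rangle$ on the deformation bimodule \emph{normal}, so the $\mathcal Q$-central functional $T \mapsto \lim_i \langle T\eta_i, \eta_i\rangle$ is defined directly on $p\langle \mathcal M, e_{\mathcal B}\rangle p$ with no weak-containment detour. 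Since you explicitly defer the ``normality and continuity checks'' to the weak containment, the technical heart of your argument rests on an unproved (and in general false) claim.

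A second, smaller omission: to get normality and nonvanishing of the limit functional on $p\mathcal M p$ you need the quantitative properties of the \emph{compressed} vectors $\eta_i = p\xi_i p$, namely $\limsup_i \|x\eta_i\|_2 \leq \|x\|_2$ for all $x \in p\mathcal Mp$ and $\liminf_i \|\eta_i\|_2 > 0$; these require the estimates $\|x\eta_i\|_2 \leq \|x\,\theta_{t}(x_k)p\|_2 \leq \|x\|_2$ and $\|\eta_i - \xi_i\|_2 \leq 2\|p - \theta_{t}(p)\|_2$, which you mention only parenthetically and never verify. The paper carries these out explicitly (building a net indexed by pairs $(\varepsilon, \mathcal F)$ with $\mathcal F \subset \Ball(\mathcal Q)$ finite), and then the entire passage from the net $(\eta_i)$ to a nonzero projection $z \in \mathcal Z(\mathcal Q' \cap p\mathcal Mp)$ with $\mathcal Q z \lessdot_{\mathcal M} \mathcal B$ is delegated to \cite[Lemma A.2]{HU15b}, which already packages the normality argument and the centrality of $z$; note also that upgrading a non-central projection $q$ with $\mathcal Q q \lessdot_{\mathcal M} \mathcal B$ to a central one requires the central-support argument (moving $q$ by partial isometries in $\mathcal Q' \cap p\mathcal Mp$), not merely the maximality statement of \cite[Lemma 3.3]{HI17}. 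To repair your proof, replace the weak-containment claim by the isomorphism $\rL^2(\widetilde{\mathcal M}) \ominus \rL^2(\mathcal M) \cong \rL^2(\mathcal M) \otimes_{\mathcal B} \mathcal L$, establish the three quantitative properties of $(\eta_i)$, and invoke \cite[Lemma A.2]{HU15b} (or reprove it).
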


\begin{proof}
We follow the proof of \cite[Theorem A.1]{HI17}. Assume that the deformation $(\theta_t)_t$ does not converge uniformly in $\|\cdot\|_2$ on $\Ball(\mathcal Q' \cap p \mathcal M p)$. Then there exist $c > 0$, a sequence $(t_k)_k$ of positive reals such that $\lim_k t_k = 0$ and a sequence $(x_k)_k$ in $\Ball(\mathcal Q' \cap p \mathcal M p)$ such that $\|x_k - \theta_{2t_k}(x_k)\|_2 \geq c$ for all $k \in \N$.

Denote by $I$ the directed set of all pairs $(\varepsilon, \mathcal F)$ with $\varepsilon  > 0$ and $\mathcal F \subset \Ball(\mathcal Q)$ finite subset with order relation $\leq$ defined by
$$(\varepsilon_1, \mathcal F_1) \leq (\varepsilon_2, \mathcal F_2) \quad \text{if and only if} \quad \varepsilon_2 \leq \varepsilon_1, \mathcal F_1 \subset \mathcal F_2.$$
Let $i = (\varepsilon, \mathcal F) \in I$ and put $\delta = \min(\frac{\varepsilon}{4}, \frac{c}{8})$. Choose $k \in \N$ large enough so that $\|p - \theta_{t_k}(p)\|_2 \leq \delta$ and $\|a - \theta_{t_k}(a)\|_2 \leq \varepsilon/4$ for all $a \in \mathcal F$. 

Put $\xi_i = \theta_{t_k}(x_k) - \rE_{ \mathcal M }(\theta_{t_k}(x_k)) \in \rL^2( \widetilde{\mathcal M} ) \ominus \rL^2( \mathcal M )$ and $\eta_i = p \xi_i p \in \rL^2(p \widetilde{\mathcal M} p) \ominus \rL^2(p \mathcal M p)$. By the transversality property of the malleable deformation $(\theta_t)_{t}$ (see \cite[Lemma 2.1]{Po06}), we have 
$$
    \|\xi_i\|_2 \geq \frac{1}{2} \|x_k - \theta_{2t_k} (x_k)\|_2 \geq \frac{c}{2}.
$$
Observe that $\|p\theta_{t_k}(x_k)p - \theta_{t_k}(x_k)\|_2 \leq 2 \|p - \theta_{t_k}(p)\|_2 \leq 2 \delta$. Since $p \in \mathcal M$, by Pythagoras theorem, we moreover have  
$$\|p\theta_{t_k}(x_k)p - \theta_{t_k}(x_k)\|_2^2 = \|\rE_{ \mathcal M }(p\theta_{t_k}(x_k)p - \theta_{t_k}(x_k))\|_2^2 + \|\eta_i - \xi_i\|_2^2$$ 
and hence $\|\eta_i - \xi_i\|_2 \leq 2\delta$. This implies that 
$$\|\eta_i\|_2 \geq \|\xi_i\|_2 - \|\eta_i - \xi_i\|_2 \geq  \frac{c}{2} - 2 \delta \geq \frac{c}{4}.$$
For all $x \in p \mathcal M p$, we have  
$$\|x \eta_i\|_2 = \|(1 - \rE_{ \mathcal M}) (x \theta_{t_k} (x_k)p) \|_2 \leq \|x \theta_{t_k}(x_k)p\|_2 \leq \|x\|_2.$$
By Popa's spectral gap argument \cite{Po06}, for all $a \in \mathcal F \subset \Ball(\mathcal Q) \subset \Ball(p \mathcal M p)$, since $ax_k = x_k a$ for all $k \in \N$, we have  
  \begin{align*}
    \| a \eta_i - \eta_i a \|_2
    & =
    \|(1 - \rE_{\mathcal M}) (a \theta_{t_k}( x_k )p - p\theta_{t_k}( x_k) a)\|_2 \\
    & \leq \|a \theta_{t_k}( x_k)p - p\theta_{t_k}( x_k) a\|_2 \\
    & \leq 2 \|a - \theta_{t_k}(a)\|_2 + 2 \|p - \theta_{t_k}(p)\|_2 \\
    & \leq \frac{\varepsilon}{2} + \frac{\varepsilon}{2} = \varepsilon.
  \end{align*}
Thus $\eta_i \in \rL^2(p \widetilde {\mathcal M} p) \ominus \rL^2(p \mathcal M p)$ is a net of vectors satisfying $\limsup_i \|x \eta_i\|_2 \leq \|x\|_2$ for all $x \in p \mathcal M p$, $\liminf_i \|\eta_i\|_2 \geq~\frac{c}{4}$ and $\lim_i \|a \eta_i - \eta_i a\|_2 = 0$ for all $a \in \mathcal Q$. 

By construction of the amalgamated free product von Neumann algebra $\widetilde{\mathcal M} = \mathcal M \ast_{\mathcal B} \mathcal M$, there exists a $\mathcal B$-$\mathcal L$-bimodule such that we have $\rL^2(\widetilde {\mathcal M}) \ominus \rL^2(\mathcal M) \cong \rL^2(\mathcal M) \otimes_{\mathcal B} \mathcal L$ as $\mathcal M$-$\mathcal M$-bimodules (see e.g.\ \cite[Section 2]{Ue98}). The existence of the net $(\eta_i)_{i \in I}$ in combination with \cite[Lemma A.2]{HU15b} shows that there exists a nonzero projection $z \in \mathcal Z(\mathcal Q' \cap p \mathcal Mp)$ such that $\mathcal Q z \lessdot_{\mathcal M} \mathcal B$.
\end{proof}

We deduce the following spectral gap rigidity result inside $M \ovt N$.

\begin{thm}\label{appendix-thm-3}
Keep the same notation as above. Let $A \subset M \ovt N$ be any abelian von Neumann subalgebra with expectation. 

If $A \npreceq_{M \ovt N}( \rL(G) \ovt N)$, then $A' \cap (M \ovt N) \lessdot_{M \ovt N} (\rL(G) \ovt N)$. 
\end{thm}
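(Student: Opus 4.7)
The plan is to reduce the statement to a spectral gap argument in the semifinite core $\mathcal M$ via the canonical isomorphism $\Pi_{\varphi \otimes \psi, \phi}$, combining Popa's intertwining-by-deformation (Theorem \ref{appendix-thm-1}) with his spectral gap rigidity (Theorem \ref{appendix-thm-2}). Since $A \subset M \ovt N$ is abelian with expectation, I will choose a faithful normal state $\phi = \varphi_A \circ \rE_A$ on $M \ovt N$, where $\varphi_A$ is a faithful normal state on $A$, so that $A \subset (M \ovt N)_\phi$. Then both $A$ and $A' \cap (M \ovt N)$ are $\sigma^\phi$-invariant, and I may form the continuous cores $\core_\phi(A)$ and $\core_\phi(A' \cap (M \ovt N))$ as subalgebras of $\core_\phi(M \ovt N)$, transporting them via $\Pi_{\varphi \otimes \psi, \phi}$ to obtain an abelian subalgebra $\mathcal A \subset \mathcal M$ together with $\mathcal A \subset \mathcal P \subset \mathcal M$. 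I fix a nonzero finite-trace projection $q$ inside the copy of $\rL_\phi(\R)$ sitting in $\mathcal A \subset \mathcal P$.

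Next, I apply Theorem \ref{appendix-thm-2} with $\mathcal Q = q\mathcal P q \subset q\mathcal M q$. The resulting dichotomy yields either (a) a nonzero projection $z \in \mathcal Z(\mathcal Q' \cap q\mathcal Mq)$ with $\mathcal Q z \lessdot_{\mathcal M} \mathcal B$, or (b) uniform $\|\cdot\|_2$-convergence of $(\theta_t)_t$ on $\Ball(\mathcal Q' \cap q\mathcal Mq)$. In case (b), Theorem \ref{appendix-thm-1} gives $\mathcal Q' \cap q\mathcal Mq \preceq_{\mathcal M} \mathcal B$. Since $A$ sits in the centre of $A' \cap (M \ovt N)$, and this centrality lifts to $\mathcal A \subset \mathcal Z(\mathcal P)$ after transport, the corner $q\mathcal A q$ sits inside $\mathcal Q' \cap q\mathcal Mq$, whence $\mathcal A \preceq_{\mathcal M} \mathcal B$. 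The condition $A \subset (M \ovt N)_\phi$ ensures that $\mathcal A$ splits essentially as $A \ovt \rL(\R)$ in $\mathcal M$, and the descent principle (as employed in the proof of Theorem \ref{letterthm-strongly-solid} via \cite[Theorem A]{Is18}) then translates $\mathcal A \preceq_{\mathcal M} \mathcal B$ back to $A \preceq_{M \ovt N} \rL(G) \ovt N$, contradicting the hypothesis.

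Hence case (a) must occur, and the remaining task is to upgrade the partial conclusion $\mathcal Qz \lessdot_{\mathcal M} \mathcal B$ to $q\mathcal Pq \lessdot_{\mathcal M} \mathcal B$. I let $z_0 \in \mathcal Z(\mathcal Q' \cap q\mathcal Mq)$ be the maximal projection with $\mathcal Q z_0 \lessdot_{\mathcal M} \mathcal B$ (exists by \cite[Lemma 3.3 (v)]{HI17}). If $z_0 \neq q$, applying Theorem \ref{appendix-thm-2} to the cut-down $\mathcal Q(q - z_0) \subset (q - z_0)\mathcal M (q - z_0)$ either enlarges $z_0$, contradicting maximality, or forces case (b) on the complement, from which the descent argument above yields the contradiction $A \preceq_{M \ovt N} \rL(G) \ovt N$. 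Hence $z_0 = q$, giving $q\mathcal Pq \lessdot_{\mathcal M} \mathcal B$; extending corners via \cite[Lemma 4.8]{HI15} yields $\mathcal P \lessdot_{\mathcal M} \mathcal B$, and \cite[Proposition 2.8]{BHR12} finally descends this to $A' \cap (M \ovt N) \lessdot_{M \ovt N} \rL(G) \ovt N$. The main obstacle will be the descent in case (b) and its iterative reuse in case (a): since $A' \cap (M \ovt N)$ does not generally lie in the centralizer of $\phi$, the commutant structure $\mathcal A' \cap \mathcal M$ inside the core is subtler than a clean tensor-product splitting, and tracing the intertwining $\mathcal A \preceq_{\mathcal M} \mathcal B$ back to $M \ovt N$ requires careful exploitation of the tensor factorisation afforded by the centralizer condition on $A$, together with the refined arguments of \cite[Theorem A]{Is18}.
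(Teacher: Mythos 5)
Your overall engine is the right one (choose $\phi$ with $A \subset (M \ovt N)_\phi$, transport to the core via $\Pi_{\varphi \otimes \psi, \phi}$, and play Theorem \ref{appendix-thm-2} against Theorem \ref{appendix-thm-1}), but several steps fail as written. First, the claim that centrality lifts to $\mathcal A \subset \mathcal Z(\mathcal P)$ is false for $\mathcal A = \Pi_{\varphi \otimes \psi, \phi}(\core_\phi(A))$: the core of $A$ contains $\rL_\phi(\R)$, which does not commute with $\core_\phi(A' \cap (M \ovt N))$ because $A' \cap (M \ovt N)$ is only globally, not pointwise, $\sigma^\phi$-invariant. What is true, and what the argument must use, is only that $\Pi_{\varphi \otimes \psi, \phi}(\pi_\phi(A))$ lies in the relative commutant of the core of $A' \cap (M \ovt N)$. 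Second, in your case (b) you apply Theorem \ref{appendix-thm-1} to $\mathcal Q' \cap q\mathcal M q$ and then pass the conclusion to the subalgebra containing the image of $A$; intertwining $\preceq_{\mathcal M}$ does \emph{not} pass to subalgebras. The correct order is: uniform convergence of $(\theta_t)_t$ on $\Ball(\mathcal Q' \cap q \mathcal M q)$ restricts to $\Ball(\Pi_{\varphi \otimes \psi, \phi}(\pi_\phi(A))q)$, and Theorem \ref{appendix-thm-1} is then applied to that corner of the image of $A$. Third, the descent of the intertwining back to $A \preceq_{M \ovt N} (\rL(G) \ovt N)$ cannot be done with \cite[Theorem A]{Is18}: in this paper that result is invoked only under the hypothesis $Q' \cap M = \mathcal Z(Q)$, which for $A$ would mean $A$ is maximal abelian — not assumed here. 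The tool that actually does this descent (using exactly that $A \subset (M\ovt N)_\phi$ and that the cutting projection lies under a finite trace projection of $\rL_\phi(\R)$) is \cite[Lemma 2.4]{HU15a}, which you never invoke.

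The deeper issue is the direct (rather than contrapositive) organization, which forces you to upgrade and descend \emph{relative amenability}, and the results you cite do not do that work. Your exhaustion over central projections gives at best $q \mathcal P q \lessdot_{\mathcal M} \mathcal B$ for the fixed finite-trace corner; \cite[Lemma 4.8]{HI15} concerns $\preceq$, not $\lessdot$, so it does not ``extend corners'' of relative amenability, and \cite[Proposition 2.8]{BHR12} concerns amenable direct summands, not relative amenability, so it does not descend $\lessdot_{\mathcal M}$ with respect to $\mathcal B$ to $\lessdot_{M \ovt N}$ with respect to $\rL(G) \ovt N$. The transfer of relative amenability between $M \ovt N$ and its core is \cite[Theorem 3.2]{Is17}, and the bookkeeping with projections is \cite[Lemma 3.3]{HI17}; neither appears in your proposal. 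The paper's proof avoids all of this by proving the contrapositive: assuming $A' \cap (M \ovt N)$ is \emph{not} amenable relative to $\rL(G) \ovt N$, it uses \cite[Theorem 3.2]{Is17} and two applications of \cite[Lemma 3.3]{HI17} to produce a finite-trace corner $p\,\core(Q)\,p$ of the core of $A' \cap (M \ovt N)$ for which \emph{no} nonzero central cut-down is amenable relative to $\mathcal B$; the first alternative of Theorem \ref{appendix-thm-2} is then void, so the deformation converges uniformly on the relative commutant, Theorem \ref{appendix-thm-1} applies to $\Pi_{\varphi \otimes \psi, \phi}(\pi_\phi(A))p$, and \cite[Lemma 2.4]{HU15a} yields $A \preceq_{M \ovt N} (\rL(G) \ovt N)$. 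I recommend reorganizing your argument along these lines; your iterated case analysis and the unsupported relative-amenability descent then disappear.
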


\begin{proof}
Choose a faithful state $\phi \in (M \ovt N)_\ast$ such that $A \subset (M \ovt N)_\phi$. Observe that $Q =  A' \cap (M \ovt N)$ is globally invariant under $\sigma^\phi$. Assume that $Q$ is not amenable relative to $\rL(G) \ovt N$ inside $M \ovt N$. Put $\core(Q) = \Pi_{\varphi \otimes \psi, \phi}(\core_{\phi}(Q)) \subset \mathcal M$. Using \cite[Theorem 3.2]{Is17}, $\core(Q)$ is not amenable relative to $\mathcal B$ inside $\mathcal M$. Using \cite[Lemma 3.3]{HI17}, there exists a nonzero finite trace projection $q \in \rL_\phi(\R)$ such that $\Pi_{\varphi \otimes \psi, \phi}(q)\core(Q)\Pi_{\varphi \otimes \psi, \phi}(q)$ is not amenable relative to $\mathcal B$ inside $\mathcal M$. Using again \cite[Lemma 3.3]{HI17}, there exists a nonzero projection $p \in \mathcal Z((\Pi_{\varphi \otimes \psi, \phi}(q)\core(Q)\Pi_{\varphi \otimes \psi, \phi}(q))' \cap \Pi_{\varphi \otimes \psi, \phi}(q) \mathcal M \Pi_{\varphi \otimes \psi, \phi}(q))$ such that with $\mathcal Q = p\core(Q)p$, we have that $\mathcal Q z$ is not amenable relative to $\mathcal B$ inside $\mathcal M$ for any nonzero projection $z \in \mathcal Z(\mathcal Q' \cap p \mathcal Mp )$. Theorem \ref{appendix-thm-2} implies that the deformation $(\theta_t)_t$ converges uniformly in $\|\cdot\|_2$ on $\Ball(\mathcal Q' \cap p \mathcal M p)$. Since $\Pi_{\varphi \otimes \psi, \phi}(\pi_\phi(A)) p \subset \mathcal Q' \cap p \mathcal M p$ is a von Neumann subalgebra, the deformation $(\theta_t)_t$ converges uniformly in $\|\cdot\|_2$ on $\Ball(\Pi_{\varphi \otimes \psi, \phi}(\pi_\phi(A)) p)$. Theorem \ref{appendix-thm-1} implies that $\Pi_{\varphi \otimes \psi, \phi}(\pi_\phi(A)) p \preceq_{\mathcal M} \mathcal B$. Since $p = \Pi_{\varphi \otimes \psi, \phi}(q) p$, we have $\Pi_{\varphi \otimes \psi, \phi}(\pi_\phi(A)q)  \preceq_{\mathcal M} \mathcal B$. Then \cite[Lemma 2.4]{HU15a} implies that $A \preceq_{M \ovt N} (\rL(G) \ovt N)$.
\end{proof}

\bibliographystyle{plain}

\end{document}